\newtheorem{theorem}{Theorem}[section]
\newtheorem{lemma}[theorem]{Lemma}
\newtheorem{definition}[theorem]{Definition}
\newtheorem{proposition}[theorem]{Proposition}
\newtheorem{corollary}[theorem]{Corollary}
\theoremstyle{definition}
\newtheorem{remark}[theorem]{Remark}
\newtheorem{example}[theorem]{Example}
\newtheorem{exercise}[theorem]{Exercise}
\newcommand\eps{\varepsilon}
\renewcommand\AA{\mathcal A}
\newcommand\BB{\mathcal B}
\newcommand\OO{\mathcal O}
\renewcommand\SS{\mathcal S}
\newcommand\UU{\mathcal U}
\newcommand\card{\mathrm{card}}
\newcommand\NN{\mathbb{N}}
\newcommand\ZZZ{\mathbb {Z}}
\newcommand\RR{\mathbb{R}}
\newcommand\CC{\mathbb{C}}
\newcommand\HH{\mathbb{H}}
\newcommand\PP{\mathbb{P}}
\newcommand\Hom{\mathrm{Hom}}
\newcommand\End{\mathrm{End}}
\newcommand\Aut{\mathrm{Aut}}
\newcommand\SL{\mathrm{SL}}
\newcommand\ab{\mathrm{ab}}
\newcommand\co{\mathrm{co}}
\newcommand\op{\mathrm{op}}
\newcommand\inv{\mathrm{inv}}
\newcommand\Iso{\mathrm{Iso}}
\newcommand\Frac{\operatorname{Frac}}
\newcommand\Alg{\operatorname{Alg}}
\newcommand\Gr{\operatorname{Gr}}
\newcommand\Gal{\operatorname{Gal}}
\newcommand\ZGal{\operatorname{Zgal}}
\newcommand\mm{\mathfrak{m}}
\newcommand\gog{\mathfrak{g}}
\newcommand\gs{\mathfrak{s}}
\newcommand\gl{\mathfrak{l}}
\newcommand\gu{\mathfrak{u}}
\DeclareMathOperator{\id}{id}
\DeclareMathOperator{\Sym}{Sym}
\numberwithin{equation}{section}
\title[Non-commutative principal fiber bundles]
{Principal fiber bundles in\\ non-commutative geometry}
\author{Christian Kassel}
\address{Christian Kassel, 
Institut de Recherche Math\'e\-ma\-tique Avanc\'ee,
CNRS \& Universit\'e de Strasbourg,
7 rue Ren\'{e} Descartes, 67084 Strasbourg, France}
\email{kassel@math.unistra.fr}
\keywords{Principal fiber bundle, non-commutative geometry, Hopf algebra, quantum group}
\subjclass[2010]{(Primary)
16T05, %Hopf algebras and their applications
17B37, %Quantum groups (quantized enveloping algebras) and related deformations
55R10, %Fiber bundles 
81R60, %Noncommutative geometry 
81R50 %Quantum groups and related algebraic methods
(Secondary)
16S80, %Deformations of rings
20G42% %Quantum groups (quantized function algebras) and their representations
}
\begin{document}

\begin{abstract}
These are the expanded notes of a course given at the Summer school 
``Geo\-metric, topological and algebraic methods for quantum field theory'' held at Villa de Leyva, Colombia in July 2015. 
We first give an introduction to non-commutative geometry and to the language of Hopf algebras.
We next build up a theory of non-commutative principal fiber bundles and 
consider various aspects of such objects.
Finally, we illustrate the theory using the quantum enveloping algebra~$U_q\, \gs\gl(2)$ and related Hopf algebras.
\end{abstract}

\maketitle

\tableofcontents

%%%

\begin{flushright}
\emph{Al \'algebra le dediqu\'e mis mejores \'animos,\\ 
no s\'olo por respeto a su estirpe cl\'asica\\
sino por mi cari\~no y mi terror al maestro.}\\
\smallskip
\textsc{Gabriel Garc\'\i a M\'arquez},\\
{Vivir para contarla}\,\cite{GM}
\end{flushright}

%%%
\section{Introduction}\label{sec-intro}

These are the expanded notes of a course given at the Summer school 
``Geometric, topological and algebraic methods for quantum field theory'' held at Villa de Leyva, Colombia in July 2015. 
The main objective of this course was twofold: 
\begin{enumerate}
\item
to give an introduction to non-commutative geometry and to the language of Hopf algebras;

\item
to build up a theory of non-commutative principal fiber bundles, consider various aspects of these non-commutative
objects, highlight the similarities and the differences with their classical counterparts,
and illustrate the theory with significant examples.
\end{enumerate}

Non-commutative geometry is based on the idea that instead of working with the points of a topological space~$X$
(or a $C^{\infty}$-manifold, or an algebraic variety) we may just as well work with the algebra~$\OO(X)$ of 
continuous (or $C^{\infty}$, or regular) functions on~$X$. 
Many geometrical constructions on~$X$ can be expressed by algebraic constructions on the commutative algebra~$\OO(X)$,
which in turn can be extended to non-necessarily commutative algebras. 
The necessity of passing from commutative algebra to non-commutative ones originates from physics; 
according to~\cite{BM1},
\begin{quote}
[it] arises from the general indication that the small-scale structure of space-time is not well-modelled by usual continuous geometry. 
At the Planck scale one may reasonably expect that our notion of geometry has to be modified 
to include quantum effects as well. Non-commutative geometry has the potential to do this.
\end{quote} 
Keeping in mind the geometric origin of such non-commutative constructions, it is natural to use the phrase 
``non-commutative spaces'' for non-commutative algebras. 
In mathematics such generalized spaces have appeared in the 1980's in the work of Connes on group actions 
and on foliations (see~\cite{Co}), but also in the theory of quantum groups, which originated in the work of Faddeev's school, 
of Drinfeld, of Jimbo, and of Woronowicz (see~\cite{Dr1, Dr2, Ji, RTF, Wo}).

Quantum groups are non-commutative algebras depending on a parameter~$q$. 
When $q$ takes the value~$1$, then quantum groups specialize to classical objects such as groups of symmetries.
The construction of quantum groups was inspired by the 
``quantum inverse scattering method'', a method devised for constructing integrable quantum systems
and mostly developped by L. D. Faddeev and his collaborators.
The discovery of quantum groups was a major event with spectacular applications not only in quantum physics, 
but also in domains of pure mathematics such as representation theory and low-dimensional topology.
Let us quote Drinfeld on quantization from the introduction of\,\cite{Dr2}:
\begin{quote}
... both in classical and quantum mechanics there are two basic concepts: state and observable.
In classical mechanics [...] observables are functions on [a manifold]~$M$. In the quantum case [...]
observables are operators in [a Hilbert space]~$H$ [...] 
[O]bservables form an associative algebra which is commutative in the classical case and noncommutative in
the quantum case. So quantization is something like replacing commutative algebras by noncommutative ones.
\end{quote}

Technically speaking, quantum groups are what algebraists and topologists call Hopf algebras. 
Therefore, the first aim of this course was 
to introduce the concept of a Hopf algebra and to illustrate it with significant examples, 
such as the ones related to the special linear group~$SL_2(\CC)$.

Our second aim was to define non-commutative analogues of principal fiber bundles.
Principal fiber bundles are ubiquitous geometrical objects in mathematics and gauge theory.
For instance, given a Lie (or algebraic) group~$G$ and a closed subgroup~$G'$, the projection
$G \to G/G'$ onto the homogeneous space~$G/G'$ is a principal fiber bundle.
To quantize homogeneous spaces we need an adequate notion of quotient of Hopf algebras
and more precisely the concepts of comodule algebras and Hopf Galois extensions.  
There are numerous meaningful examples of non-commutative principal fiber bundles; 
see \cite{BM1, DGH, Ha1, HM, LPR, LS, Pf, Po}.

Let us give an overview of these notes. 
In~Sect.\,\ref{sec-bundle} we review the definition of classical principal fiber bundles and state their main properties. 
In~Sect.\,\ref{sec-NCG} we undertake the crucial passage from commutative to non-commutative algebras;
we concentrate on two simple situations in which a space $X$ can easily be replaced by its function algebra~$\OO(X)$, 
namely when $X$ is a finite set or when it is an affine algebraic variety. 
To make things even simpler, all objects and algebras considered in these notes 
are defined over the field~$\CC$ of complex numbers. 
We also give in~Sect.\,\ref{sec-NCG} our first example of a non-commutative space, 
namely the ``quantum plane'', a one-parameter deformation of the ordinary complex plane,
and we extend certain basic operations from ordinary spaces to non-commutative ones.

In Sect.\,\ref{sec-Hopf} we consider the case when a space has an additional group structure. 
This naturally leads us to the notion of a Hopf algebra.
In Sect.\,\ref{ssec-group} we present two mutually dual Hopf algebras constructed from a finite group.

In~Sect.\,\ref{sec-SL2} we introduce two quantum groups associated with the Lie group $SL_2(\CC)$; 
one is its quantum coordinate algebra~$\SL_q(2)$, 
the other one is the quantum enveloping algebra~$U_q\, \gs\gl(2)$ of the Lie algebra of~$SL_2(\CC)$. 
We also construct a duality map between them and consider two interesting quotients.

In Sect.\,\ref{sec-group-action} we extend the notion of a group action to the non-commutative world. 
This leads us to the concept of a comodule algebra over a Hopf algebra. 
We give various examples of comodule algebras, thus showing that this concept covers much more than just group actions. 
In particular, any group-graded algebra is a comodule algebra over a suitable Hopf algebra. 
We also show how to equip the quantum plane with the structure of a comodule algebra 
over the quantum coordinate algebra of~$SL_2(\CC)$.

Section\,\ref{sec-HopfGalois} is entirely devoted to Hopf Galois extensions, 
which are non-commu\-ta\-tive analogues of principal fiber bundles. 
We pose the problem of classifying them and show that, contrary to the classical case, 
there may exist (infinitely many) non-isomorphic non-commutative principal fiber bundles over a point.
We also define the non-commutative version of the pull-back of a bundle.

In the final section (Sect.\,\ref{sec-versal}), for any Hopf algebra~$H$
we construct a non-commu\-ta\-tive principal fiber bundle in the form of a deformation~$\AA_H$ of~$H$ 
over a parameter space~$\BB_H$ which is the coordinate algebra of a smooth affine algebraic variety 
of the same dimension as~$H$.
We give explicit formulas for this non-commutative  principal fiber bundle 
when $H$ is the quantum enveloping algebra~$U_q\, \gs\gl(2)$ or some of its finite-dimensional quotients.

We will not give the proofs of all statements in these notes. 
For some of them we will refer to the relevant publications or to exercises if they turn out to be rather simple.
Except for Theo\-rems\,\ref{prop-def-Uq} and\,\ref{prop-def-ud} in Section\,\ref{sec-deform-Uq}, the material
presented in these notes already exists in the literature.

%%%
\section{Review of principal fiber bundles}\label{sec-bundle}

\begin{flushright}
\emph{La geometr\'\i a fue m\'as compasiva tal vez por \\
obra y gracia de su prestigio literario.} \cite{GM}\\
\end{flushright}

We start by recalling the definition and the basic properties of fiber bundles and of principal fiber bundles. 
In~Sect.\,\ref{sec-HopfGalois} we will define non-commutative analogues of such bundles.

\subsection{Fiber bundles}\label{ssec-fiber}

Let $F$ be a topological space. Recall that a \emph{fiber bundle}\index{fiber bundle} with fiber~$F$ 
is a locally trivial continuous map $\pi: P \to X$
from a topological space~$P$, called the \emph{total space}\index{fiber bundle!total space} of the bundle, 
to a topological space~$X$, called the \emph{base space}\index{fiber bundle!base space of},
such that each \emph{fiber}\index{fiber bundle!fiber of}~$\pi^{-1}(\{x\})$ is homeomorphic to~$F$.
\emph{Locally trivial}\index{fiber bundle!locally trivial} means that for each $x\in X$ 
there is a neighbourhood~$U \subset X$ of~$x$ and a homeomorphism
$\psi: \pi^{-1}(U) \cong U \times F$ such that $\pi = p_1 \circ \psi$, 
where $p_1: U\times F \to U$ is the first projection onto~$U$.

In the sequel we assume that the topological spaces we consider are Hausdorff and paracompact 
(the latter means that every open cover has a locally finite open refinement). 
These conditions are satisfied by most spaces generally considered.

A \emph{fiber bundle map}\index{fiber bundle!map of} from a fiber bundle $\pi': P' \to X'$ 
to another fiber bundle $\pi: P \to X$ with the same fiber~$F$ 
is a pair $(\widetilde{\varphi}: P' \to P, \,\varphi: X' \to X)$ of continuous maps such that
$\pi \circ \widetilde{\varphi} = \varphi \circ \pi'$.
The composition of two such maps is again a fiber bundle map.
A fiber bundle map is said to be a \emph{homeomorphism of fiber bundles} if both $\widetilde{\varphi}: P' \to P$ and $\varphi: X' \to X$
are homeomorphisms.

The simplest example of a fiber bundle with fiber~$F$ and base space~$X$ is given by
the first projection $p_1 : X \times F \to X$. Any fiber bundle homeomorphic to such a fiber bundle is called
a \emph{trivial fiber bundle}\index{fiber bundle!trivial}.

\subsection{Pull-backs}\label{ssec-pullback}

We now deal with an important functoriality property.
Any fiber bundle $\pi: P \to X$ with fiber~$F$ and base space~$X$ together 
with any continuous map $\varphi: X' \to X$ induces
a fiber bundle $\pi': \varphi^*(P) \to X'$ with the same fiber~$F$ and with base space~$X'$.
The space~$\varphi^*(P)$ is defined by 
\begin{equation*}
\varphi^*(P) = \left\{ (x',p) \in X' \times P \; |\;  \varphi(x') = \pi(p) \right\} 
\end{equation*}
and the map $\pi': \varphi^*(P) \to X'$ is equal to the composite map $\varphi^*(P) \subset X' \times P \overset{p_1}{\to} X'$.
The fiber bundle $\pi': \varphi^*(P) \to X'$ is called the \emph{pull-back}\index{fiber bundle!pull-back of}\index{pull-back}
of the bundle $\pi: P \to X$
along the map $\varphi: X' \to X$.

Clearly, if $\varphi': X'' \to X'$ is another continuous map, then 
\[
\varphi'{}^* (\varphi^*(P)) \cong (\varphi \circ \varphi')^*(P).
\]

If $\id: X \to X$ is the identity map of~$X$, then $\id^*(P) = P$. 
It follows that any homeo\-morphism $\varphi: X' \to X$ induces a homeomorphism $\varphi^*(P) \cong P$.

\begin{exercise}\label{exo-bundle}
(a) Let $\pi: P \to X$ be a fiber bundle. Prove that if $i : \{x\} \to X$ is the inclusion of a point $x$ in~$X$,
then $i^*(P) = \pi^{-1}(\{x\})$ is the fiber of the bundle at~$x$.

(b) Show that any fiber bundle with base space equal to a point is trivial.

(c) Prove that the pull-back of a trivial fiber bundle is trivial.

(d) Let $X$ be a \emph{contractible} space, i.e. such that there is an element $x_0 \in X$ and 
a continuous map $\eta : X \times [0,1] \to X$ such that $\eta(x,0) = x$ and $\eta(x,1) = x_0$ for all $x\in X$.
Show that any fiber bundle with base space~$X$ is trivial.
\end{exercise}

For more on fiber bundles, see the classical references~\cite{Hu, St}.

\subsection{Principal fiber bundles}\label{ssec-ppal}\index{principal fiber bundle}\index{fiber bundle!principal}

We fix now a topological group~$G$. 

\begin{definition}
A \emph{principal $G$-bundle} is a fiber bundle $\pi: P \to X$ with a continuous left action $G \times P \to P$
satisfying the following two conditions:
\begin{enumerate}
\item[(i)]
we have $\pi(gp) = \pi(p)$ for all $g\in G$ and $p\in P$,

\item[(ii)]
for all $p,p' \in P$ with $\pi(p) = \pi(p')$ there is a unique element $g\in G$ such that $gp = p'$.
\end{enumerate}
\end{definition}

In other words, in a principal $G$-bundle the group action preserves each fiber $\pi^{-1}(x)$ and
the action of~$G$ on each fiber is free and transitive. It follows that each fiber is in bijection with~$G$
and that the space of orbits $G\backslash P$ is homeomorphic to the base space~$X$.

An equivalent way to express Conditions\,(i) and\,(ii) above is to require that
the map
\begin{equation}\label{eq-gamma}
\gamma: G \times P \to P \times P \, ; \;\; (g,p) \mapsto (gp,p)
\end{equation}
is a bijection from~$G \times P$ onto the subspace 
\begin{equation*}
P \times_X P = \left\{ (p,p') \in P \times P \; |\;  \pi(p) = \pi(p') \right\}.
\end{equation*}

Given principal $G$-bundles $\pi: P' \to X'$ and $\pi: P \to X$, 
a \emph{map of principal $G$-bundles}\index{principal fiber bundle!map of}
from the first one to the second one is a fiber bundle map $(\widetilde{\varphi}, \varphi)$ compatible
with the $G$-action, i.e. such that
$\widetilde{\varphi}(gp') = g\widetilde{\varphi}(p')$ for all $g\in G$ and $p' \in P'$.

\begin{example}
Given a topological space~$X$, let $G$ act on $P = G \times X$ by $g'(g,x) = (g'g,x)$ ($g,g' \in G$, $x\in X$).
This is a principal $G$-bundle. Any principal $G$-bundle homeomorphic to such a bundle is called 
a \emph{trivial principal $G$-bundle}\index{principal fiber bundle!trivial}.
\end{example}

\begin{example}\label{exa-ppal}
Consider the group~$S^1$ of complex numbers of modulus one.
Given an integer~$n \geq 1$, the map
$\pi_n: S^1 \to S^1$ defined by $\pi_n(z) = z^n$ is a principal $G$-bundle, 
where $G$ is the cyclic group~$\ZZZ/n$ of order~$n$.
\end{example}

\begin{exercise}
Prove that the principal $\ZZZ/n$-bundle $\pi_n: S^1 \to S^1$ of Example\,\ref{exa-ppal} 
is trivial if and only if $n = 1$.
\end{exercise}

\subsection{Functoriality and classification}\label{ssec-functor}

We now record important properties of principal $G$-bundles. 
For the proofs we refer to\,\cite[Chap.\,4]{Hu} or to\,\cite{St}.

\begin{theorem}\label{th-ppal}
(a) If $\pi: P \to X$ is a principal $G$-bundle and $\varphi: X' \to X$ is a continuous map,
then the pull-back\index{principal fiber bundle!pull-back of}\index{pull-back} 
$\pi': \varphi^*(P) \to X'$ is a principal $G$-bundle.

(b) If $\pi: P \to X$ is a principal $G$-bundle
and $\varphi_0, \varphi_1 : X' \to X$ are homotopic\footnote{That is, there exists a continuous map
$\Phi: X' \times [0,1] \to X$ such that $\Phi(x,0) = \varphi_0(x)$ and $\Phi(x,1) = \varphi_1(x)$ for all $x\in X'$.} 
continuous maps, then the principal $G$-bundles $\varphi_0^*(P)$ and $\varphi_1^*(P)$ are homeomorphic.

(c) There exists a principal $G$-bundle $\pi_G: EG \to BG$ such that for any principal $G$-bundle $\pi: P \to X$
there is a continuous map $\varphi: X \to BG$ such that $\varphi^*(EG)$ is homeomorphic to~$\pi: P \to X$;
the map~$\varphi$ is unique up to homotopy.
\end{theorem}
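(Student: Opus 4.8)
The plan is to treat the three parts separately, since (a) and (b) are essentially soft arguments while (c) is the substantive statement requiring a construction. For part (a), I would argue directly from the definition: given a principal $G$-bundle $\pi: P \to X$ and a continuous map $\varphi: X' \to X$, we already know from Section \ref{ssec-pullback} that $\pi': \varphi^*(P) \to X'$ is a fiber bundle with fiber $F \cong G$. I would equip $\varphi^*(P) \subset X' \times P$ with the $G$-action $g \cdot (x', p) = (x', gp)$; this is well defined because $\varphi(x') = \pi(p) = \pi(gp)$ by condition (i) for $P$, and it is continuous as a restriction of $\id_{X'} \times (\text{action on } P)$. Conditions (i) and (ii) for $\varphi^*(P)$ then follow pointwise: $\pi'(g\cdot(x',p)) = x' = \pi'(x',p)$, and if $\pi'(x',p) = \pi'(x',p')$ with the same first coordinate $x'$, then $\pi(p) = \varphi(x') = \pi(p')$, so the unique $g$ with $gp = p'$ from condition (ii) for $P$ does the job. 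Equivalently, one checks the map $\gamma$ of \eqref{eq-gamma} for $\varphi^*(P)$ is a bijection onto $\varphi^*(P) \times_{X'} \varphi^*(P)$, which reduces to the corresponding fact for $P$.

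For part (b), the standard route is the \emph{homotopy covering theorem} for principal bundles: pulling back $P$ along the homotopy $\Phi: X' \times [0,1] \to X$ gives a principal $G$-bundle $\Phi^*(P)$ over $X' \times [0,1]$, whose restrictions to $X' \times \{0\}$ and $X' \times \{1\}$ are $\varphi_0^*(P)$ and $\varphi_1^*(P)$ respectively. The key lemma — this is where the paracompactness hypothesis on $X'$ is used, as flagged in Section \ref{ssec-fiber} — asserts that a principal $G$-bundle over $Y \times [0,1]$ with $Y$ paracompact is isomorphic to the pull-back of its restriction to $Y \times \{0\}$ along the projection $Y \times [0,1] \to Y \times \{0\}$. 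Granting that lemma, restricting the resulting isomorphism to $Y \times \{1\}$ identifies the two endpoint bundles. Since the paper explicitly says proofs of this theorem are deferred to \cite[Chap.\,4]{Hu} and \cite{St}, I would cite those for the covering homotopy lemma rather than reprove it; the main obstacle here is genuinely that lemma, whose proof requires a partition-of-unity argument to patch local trivializations along the interval direction.

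For part (c), the existence of a \emph{universal} (or \emph{classifying}) bundle $\pi_G: EG \to BG$, I would invoke the Milnor construction: take $EG$ to be the infinite join $G * G * G * \cdots$ with its natural free $G$-action, and $BG = EG/G$. One shows $EG$ is contractible (each finite join $G^{*n}$ is $(n-2)$-connected, so the colimit is weakly contractible, and a genuine nullhomotopy can be built by the standard sliding-coordinates trick) and that the quotient map is a locally trivial principal $G$-bundle. Contractibility of $EG$ together with part (b) and Exercise \ref{exo-bundle}(d) forces any principal $G$-bundle pulled back to $BG$ from $EG$ to behave universally; concretely, given $\pi: P \to X$ with $X$ paracompact, one builds the classifying map $\varphi: X \to BG$ by choosing a locally finite trivializing cover $\{U_i\}$ with subordinate partition of unity $\{\rho_i\}$ and setting $\varphi(x)$ to be the point of the join with $G$-coordinates determined by the $\rho_i(x)$ and the transition data of $P$. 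Uniqueness of $\varphi$ up to homotopy is proved by the same join-interpolation idea applied to the mapping cylinder, again using part (b). The principal difficulty in (c) is twofold: verifying that the infinite join carries the structure of a locally trivial bundle (not merely a free quotient), and carrying out the partition-of-unity bookkeeping in the construction of $\varphi$ and its homotopy uniqueness. As with (a) and (b), since these are classical facts I would present the statement and refer to \cite{Hu, St} (or to Milnor's original paper) for the full verification, sketching only the join construction and the contractibility of $EG$ to convey why the theorem is true.
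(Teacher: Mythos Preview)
Your proposal is correct, and in fact it goes well beyond what the paper does: the paper gives no proof at all for this theorem, stating only ``For the proofs we refer to \cite[Chap.\,4]{Hu} or to \cite{St}.'' You have supplied a clean sketch of (a), correctly identified the covering homotopy lemma (and the role of paracompactness) as the core of (b), and outlined Milnor's join construction for (c), while still deferring the technical verifications to the same references. So there is nothing to compare against; your write-up is strictly more informative than the paper's treatment and is consistent with it.
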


The base space of the principal $G$-bundle $\pi_G: EG \to BG$ is called the \emph{classifying space}\index{classifying space}
of the group~$G$. The terminology is justified by the following immediate consequence of the theorem.

\begin{corollary}\label{coro-ppal}\index{classification}
The map $\varphi \mapsto \varphi^*(EG)$ induces a bijection between the set~$[X,BG]$
of homotopy classes of continuous maps from~$X$ to~$BG$
and the set $\Iso_G(X)$ of homeomorphism classes of principal $G$-bundles with base space~$X$:
\begin{equation*}
[X,BG] \cong \Iso_G(X) .
\end{equation*}
\end{corollary}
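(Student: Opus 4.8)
The plan is to derive Corollary~\ref{coro-ppal} directly from the three parts of Theorem~\ref{th-ppal}, treating that theorem as a black box. Define the map
\[
\Theta_X : [X,BG] \to \Iso_G(X), \qquad [\varphi] \mapsto [\varphi^*(EG)].
\]
First I would check that $\Theta_X$ is well defined: if $\varphi_0$ and $\varphi_1$ are homotopic maps $X \to BG$, then part~(b) of Theorem~\ref{th-ppal} (applied to the universal bundle $\pi_G: EG \to BG$) says $\varphi_0^*(EG)$ and $\varphi_1^*(EG)$ are homeomorphic as principal $G$-bundles, so the homeomorphism class depends only on the homotopy class of~$\varphi$. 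That the target lands in $\Iso_G(X)$ rather than just in fiber bundles over~$X$ is exactly part~(a): a pull-back of a principal $G$-bundle is a principal $G$-bundle, and the pull-back of the $G$-action gives the required compatible $G$-action on $\varphi^*(EG)$.

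Next I would prove surjectivity of $\Theta_X$. Given any principal $G$-bundle $\pi: P \to X$, part~(c) produces a continuous map $\varphi: X \to BG$ with $\varphi^*(EG)$ homeomorphic to~$P$ over~$X$; hence $\Theta_X([\varphi]) = [P]$, and every class in $\Iso_G(X)$ is hit. For injectivity, suppose $\varphi_0, \varphi_1 : X \to BG$ satisfy $\varphi_0^*(EG) \cong \varphi_1^*(EG)$ as principal $G$-bundles. Applying the uniqueness clause of part~(c) to the single bundle $P := \varphi_0^*(EG)$: both $\varphi_0$ and $\varphi_1$ are classifying maps for $P$ (for $\varphi_1$ this uses the chosen homeomorphism $\varphi_1^*(EG) \cong \varphi_0^*(EG)$), so by the ``unique up to homotopy'' assertion $\varphi_0 \simeq \varphi_1$, i.e. $[\varphi_0] = [\varphi_1]$ in $[X,BG]$. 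This establishes that $\Theta_X$ is a bijection, which is the assertion of the corollary.

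The one genuinely delicate point — and the step I would flag as the main obstacle — is the injectivity argument, because it requires reading the uniqueness statement in part~(c) correctly. Theorem~\ref{th-ppal}(c) guarantees that \emph{for a given} principal $G$-bundle the classifying map is unique up to homotopy; to use it for injectivity one must first observe that a homeomorphism $\varphi_1^*(EG) \cong \varphi_0^*(EG)$ of principal $G$-bundles over~$X$ lets us regard $\varphi_1$ as a classifying map for the \emph{same} bundle $\varphi_0^*(EG)$, so that the uniqueness clause applies to the pair $(\varphi_0, \varphi_1)$. Making this identification precise is the crux; everything else is formal bookkeeping with pull-backs, using the functoriality relations $\id^*(P)=P$ and $\varphi'^*(\varphi^*(P)) \cong (\varphi\circ\varphi')^*(P)$ recorded before Exercise~\ref{exo-bundle}. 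One may also remark that naturality of $\Theta_X$ in~$X$ (compatibility with pull-back along maps $X' \to X$) follows from the same functoriality, though this is not needed for the bare statement of the corollary.
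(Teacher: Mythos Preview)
Your proof is correct and follows exactly the approach the paper intends: the paper does not give a proof at all but simply labels the corollary an ``immediate consequence'' of Theorem~\ref{th-ppal}, and your argument is precisely the unpacking of that implication --- part~(b) for well-definedness, part~(a) for the target, and the existence/uniqueness clauses of part~(c) for surjectivity and injectivity. Your careful discussion of the injectivity step (identifying $\varphi_1$ as a classifying map for $\varphi_0^*(EG)$ via the given homeomorphism) is more than the paper supplies, but it is the right way to make the ``immediate'' deduction honest.
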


Starting from the next section, we shall build up the algebraic language necessary to define 
non-commutative analogues of principal fiber bundles.

%%%
\section{Basic ideas of non-commutative geometry}\label{sec-NCG}

As we stated in the introduction, non-commutative geometry is based on the idea of 
(a)~replacing a space~$X$ by its (commutative) function algebra~$\OO(X)$, 
(b)~passing from commutative algebras to non-commutative algebras.
In this section we start with two simple geometric situations,
namely when $X$ is a finite set and when it is an affine algebraic variety. 
In~Sect.\,\ref{ssec-NCA} we present our first elementary example of a non-commutative space, 
namely the quantum plane, 
and in\,Sect.\,\ref{ssec-operations} we extend certain basic operations from spaces to non-commutative ones.

For \emph{deformation quantization}\index{quantization!deformation}, which is another way, 
inspired by quantum mechanics, to pass from commutative algebras to non-commutative algebras
see the lectures\,\cite{Gu} by Simone Gutt.

\subsection{Two classical dualities between spaces and algebras}\label{ssec-dual}

Let us now present two well-known correspondences between spaces and algebras. 
All algebras\index{algebra} we consider in these notes are $\CC$-algebras 
(i.e. defined over the field~$\CC$ of complex numbers).
We furthermore assume that all algebras are associative\index{algebra!associative} and unital. 
We denote the unit\index{unit} of an algebra~$A$ by~$1$, or by~$1_A$ to avoid any confusion.

\subsubsection{Finite sets}\label{sssec-set}
In the first example, the spaces which we consider are merely sets, or if one prefers, discrete topological spaces.
To any set~$X$ we associate its \emph{function algebra}\index{function algebra}~$\OO(X)$, which consists of all
complex-valued functions on~$X$. Given two such functions $u_1,u_2 : X \to \CC$, we may consider
any linear combination $\lambda_1 u_1 + \lambda_2 u_2$, where $\lambda_1$ and $\lambda_2$ are complex numbers;
the function $\lambda_1 u_1 + \lambda_2 u_2$ is defined by
\[
(\lambda_1 u_1 + \lambda_2 u_2)(x) = \lambda_1 u_1(x) + \lambda_2 u_2(x)
\]
for all $x\in X$. Similarly, the product\index{product} $u_1u_2$ of two functions $u_1, u_2 \in \OO(X)$ is defined 
by $(u_1u_2)(x) = u_1(x) u_2(x)$ for all $x\in X$. 
These operations provide~$\OO(X)$ with the structure of a commutative associative and unital $\CC$-algebra.
The unit\index{unit} is the constant function whose values are all equal to~$1$.

For any $x\in X$, consider the $\delta$-function\index{$\delta$-function} $\delta_x$ defined for all $y\in X$ 
by $\delta_x(y) = \delta_{x,y}$, where $\delta_{x,y}$ is the Kronecker symbol\footnote{Recall 
that $\delta_{x,y} = 1$ if $x = y$ and $\delta_{x,y} = 0$ otherwise.}.
The product of two $\delta$-functions is clearly given by
\begin{equation*}
\delta_x \, \delta_y = \delta_{x,y} \, \delta_x.
\end{equation*}
This means that each $\delta$-function is an idempotent, i.e., $\delta_x^2 = \delta_x$, 
and that the product of two distinct $\delta$-functions is zero.

If the set~$X$ is \emph{finite}, then the set~$\{\delta_x\}_{x\in X}$ of $\delta$-functions forms a basis of~$\OO(X)$ 
considered as a vector space over the complex numbers. 
Indeed, we can expand any function $u: X \to \CC$ in the following unique way:
\begin{equation*}
u = \sum_{x \in X} \, u(x) \, \delta_x .
\end{equation*}
Note that the unit of~$\OO(X)$ is the sum of the $\delta$-functions:
$1 = \sum_{x\in X} \, \delta_x$.

If the set~$X$ is of cardinality~$N$, we can order the elements of~$X$ and assume that $X = \{ x_1, \ldots, x_N\}$.
Consider the linear map 
\[
u \in \OO(X) \mapsto \left( u(x_1), \dots u(x_N) \right) \in \CC^N. 
\]
This map is clearly an isomorphism from 
$\OO(X)$ onto the $N$-dimensional vector space~$\CC^N$. It is also an algebra isomorphism
if we endow~$\CC^N$ with the product\index{product}
\begin{equation*}
(x_1, \ldots x_N) (y_1, \ldots y_N) = (x_1y_1, \ldots x_Ny_N).
\end{equation*}
In particular, the dimension of~$\OO(X)$ is equal to the cardinality of~$X$. Since a finite set
is determined up to bijection by its cardinality, it follows that a finite set~$X$ can be recovered
(up to bijection) from its function algebra~$\OO(X)$.

\subsubsection{Algebraic varieties}\label{sssec-algvar}

The next correspondence is more substantial, namely the one between algebraic varieties and commutative algebras.
Recall that a \emph{complex algebraic variety}\index{algebraic variety} is the set of solutions of a system 
of polynomial equations over the complex numbers:
more precisely, let $\Sigma$ be a set of polynomials in $\CC[X_1, \ldots, X_n]$; then the corresponding algebraic variety
is given by
\begin{equation*}
V = \left\{ \, (x_1, \ldots, x_n) \in \CC^n \, | \; P(x_1, \ldots, x_n) = 0 \;\; \text{for all} \; P\in \Sigma \, \right\} .
\end{equation*}
To $V$ we associate the quotient-algebra 
\begin{equation*}
\OO(V) = \CC[X_1, \ldots, X_n]/I_{\Sigma},
\end{equation*}
where $I_{\Sigma}$ is the ideal of~$\CC[X_1, \ldots, X_n]$ generated by~$\Sigma$. 
We say that $\OO(V)$ is the \emph{coordinate algebra}\index{coordinate algebra} of the algebraic variety~$V$.
The algebra~$\OO(V)$ is a finitely generated commutative $\CC$-algebra.

Conversely, let us start from a finitely generated commutative $\CC$-algebra~$A$. It can be written as the 
quotient of a polynomial algebras with finitely many variables, i.e. it is of the form
\begin{equation*}
A = \CC[X_1, \ldots, X_n]/I
\end{equation*}
for some ideal~$I \subset \CC[X_1, \ldots, X_n]$.
Then $A = \OO(V)$, where $V$ is the set of points $(x_1, \ldots, x_n) \in \CC^n$ satisfying the system of polynomial equations
$P(x_1, \ldots, x_n) = 0$ for all $P \in I$.

There is another way to find $V$ such that $A = \OO(V)$ for a given finitely generated commutative $\CC$-algebra~$A$.
Namely consider the set $\Alg(A,\CC)$ of characters of~$A$.
A~\emph{character}\index{character of an algebra} of~$A$ is an algebra homomorphism~$\chi$ from $A$ to~$\CC$, 
i.e.\ a linear form satisfying the conditions
\begin{equation*}
\chi(ab) = \chi(a) \chi(b) \quad\text{and} \quad \chi(1) = 1.
\end{equation*}
Now, if $A = \CC[X_1, \ldots, X_n]/I$, then a character $\chi: A \to \CC$ is determined by its values
$\chi(X_i) = x_i \in \CC$ on the generators $X_1, \ldots, X_n$. Since $\chi$ must be zero on the ideal~$I$,
this means that the $n$-tuple $(x_1, \ldots, x_n) \in \CC^n$ of  values must be a solution of the equations 
$P(x_1, \ldots, x_n) = 0$ for all $P \in I$. Such solutions form an algebraic variety~$V$ and we have $A = \OO(V)$.

Let us also observe that the characters of a finitely generated commutative $\CC$-algebra~$A$
are in bijection with its maximal ideals. Indeed, start from a character $\chi: A \to \CC$;
its kernel $\mm$ is an ideal of~$A$. Since $\chi$ is surjective, we have $A/\mm \cong \CC$ by
Noether's first isomorphism theorem. Therefore, $\mm$ is a maximal ideal.
Conversely, let $\mm$ be a maximal ideal of~$A$. 
Then $A/\mm$ is a field which is isomorphic to~$\CC$ by Zarisky's lemma or by Hilbert's Nullstellensatz. 
The composed algebra map $\chi: A \to A/\mm \cong \CC$ is a character of~$A$.

Let us now give some elementary examples of commutative algebras corresponding to algebraic varieties.

\begin{example}
The coordinate algebra\index{coordinate algebra} of a point is~$\CC$ since $\Alg(\CC,\CC)$ consists only of one element,
namely the identity map. This follows also from the description of the function algebra of a finite set
given in Sect.\,\ref{sssec-set}.
\end{example}

\begin{example}
The one-variable polynomial algebra $\CC[X]$ is the coordinate algebra of the \emph{complex line}~$\CC$ since any 
algebra homomorphism $\CC[X] \to \CC$ is determined by its value on the variable~$X$; 
equivalently, $\Alg(\CC[X],\CC) \cong \CC$.

Similarly, the two-variable polynomial algebra $\CC[X,Y]$ is the coordinate algebra of the \emph{complex plane}~$\CC^2$:
any algebra homomorphism $\CC[X,Y] \to \CC$ is determined by its values on~$X$ and~$Y$. 
We have $\Alg(\CC[X,Y], \CC) \cong \CC^2$.
\end{example}

\begin{example}
Let us now consider the algebra $A = \CC[X,X^{-1}]$ of Laurent polynomials in the variable~$X$.
Since $XX^{-1} = 1$, this algebra can also be seen as the quotient-algebra $\CC[X,Y]/(XY-1)$.
Here also any algebra homomorphism $\chi: A \to \CC$ is determined by its value~$\chi(X) = x \in \CC$
on the variable~$X$, but contrary to the case of~$\CC[X]$, 
the fact that $X$ is invertible in~$A$ puts the following restriction on~$x$, namely 
\[
x \, \chi(X^{-1}) = \chi(X) \, \chi(X^{-1}) = \chi(XX^{-1}) = \chi(1) = 1.
\] 
Therefore, $x$ is invertible in the field~$\CC$,
which is equivalent to $x \neq 0$. We deduce $\Alg(A,\CC) \cong \CC^{\times}$,
where $\CC^{\times} = \CC \setminus\{0\}$.
In other words, the algebra $\CC[X,X^{-1}]$ of Laurent polynomials is the coordinate algebra of 
the \emph{once-punctured complex line}.
\end{example}

\begin{example}
The algebra $\CC[X,Y]/(Y^2 - X^3 + X - 1)$ is the coordinate algebra of the \emph{elliptic curve} consisting of the points
$(x,y) \in \CC^2$ satisfying the equation 
\[
y^2 = x^3 - x +1.
\]
\end{example}

\begin{example}
Let $x_1, \ldots, x_N$ be distinct points in the complex line~$\CC$. Consider the quotient-algebra
$A = \CC[X]/(X - x_1, \ldots, X - x_n)$. Since the polynomials $X - x_i$ are coprime, we also have
$A = \CC[X]/(P)$, where $P$ is the degree~$N$ polynomial 
\[
P = (X - x_1) \cdots (X - x_n). 
\]
The assignment
$Q \in \CC[X] \mapsto (Q(x_1), \ldots, Q(x_N)) \in \CC^N$ induces an algebra isomorphism $A \cong \CC^N$.
This example shows that a finite set can be seen as a special case of an algebraic variety.
\end{example}

\subsection{Non-commutative algebras}\label{ssec-NCA}\index{non-commutative algebra}
From now on we deal with non-necessarily com\-mu\-ta\-tive algebras. 
We recall that all algebras we consider are associative unital $\CC$-algebras.

\subsubsection{Non-commutative polynomials}\label{ssec-NCpoly}\index{non-commutative polynomial}
The prototype of a finitely generated complex commutative algebra is 
the algebra of polynomials $\CC[X_1, \ldots, X_n]$ in finitely many variables.
In an analogous way
the prototype of a finitely generated not necessarily commutative complex algebra is the algebra
$\CC\, \langle X_1, \ldots, X_n\rangle$ of polynomials in $n$~\emph{non-commuting variables} $X_1, \ldots, X_n$.
Any element of~$\CC\, \langle X_1, \ldots, X_n\rangle$ is a finite linear combination (with complex coefficients)
of finite words in the letters $X_1, \ldots, X_n$. 
Such a linear combination is unique because such words form a basis of~$\CC \,\langle X_1, \ldots, X_n\rangle$
considered as a vector space over the complex numbers.

Mind the difference between these two kinds of polynomial algebras:
the element $XY -YX$ is non-zero in~$\CC\, \langle X, Y \rangle$ whereas it vanishes in~$\CC[X,Y]$.

Any finitely generated complex algebra~$A$ is a quotient-algebra of $\CC \, \langle X_1, \ldots, X_n\rangle$
for some~$n$, which means that $A$ can be expressed as
\[
A = \CC\,  \langle X_1, \ldots, X_n\rangle/I
\] 
for some two-sided ideal~$I$ of~$\CC \, \langle X_1, \ldots, X_n\rangle$. 
For instance, for the algebra of ordinary polynomials in $n$~variables, we have
\begin{equation*}\label{eq-plane}
\CC[X_1, \ldots, X_n] = \CC \, \langle X_1, \ldots, X_n\rangle/I,
\end{equation*}
where $I$ is the two-sided ideal generated by all elements of the form $X_iX_j - X_jX_i$ ($i, j \in \{1, \ldots, n\}^2$).

\subsubsection{The quantum plane}\label{sssec-qplane}\index{quantum plane}

Let $q$ be a non-zero complex number. 
Consider the algebra~$\CC \, \langle X, Y \rangle$ of polynomials in two non-commuting variables $X,Y$
and the two-sided ideal $I_q$ of~$\CC \, \langle X, Y \rangle$
generated by $YX - q XY$. 
The quotient-algebra\index{$\CC_q[X,Y]$}
\begin{equation*}
\CC_q[X,Y] = \CC \, \langle X, Y \rangle/I_q
\end{equation*}
is \emph{not commutative} unless $q = 1$. 

When $q = 1$, then the algebra~$\CC_q[X,Y]$ is isomorphic to~$\CC[X,Y]$,
which is the coordinate algebra of the plane. 
Thus, $\CC_q[X,Y]$ is a one-parameter non-commutative deformation\index{$q$-deformation}\index{deformation}
(or a quantization\index{quantization}) of the coordinate algebra of the plane. For this reason
and by extension, $\CC_q[X,Y]$ can be considered as the coordinate algebra of a ``space'' in an extended sense,
of a so-called \emph{non-commutative space}\index{non-commutative space}. 
In this particular instance, this non-commutative space is known in the literature under the name \emph{quantum plane}.

The set $\{X^i Y^j\}_{i,j \geq 0}$ forms a basis of~$\CC_q[X,Y]$, independently of~$q$
(see Exercise\,\ref{exo-qplane-basis} below). 
Notice that the defining relation $YX  = q XY$
implies the following product\index{product} formula for two monomials in~$\CC_q[X,Y]$:
\begin{equation*}
(X^i Y^j) (X^k Y^{\ell}) = q^{jk} \, X^{i+k} Y^{j + \ell}. \qquad\qquad (i,j, k,\ell \geq 0)
\end{equation*}

In Sect.\,\ref{sssec-algvar} we showed how to recover an algebraic variety~$V$ from its coordinate algebra, using its
characters. Let us look at the set $\Alg(\CC_q[X,Y],\CC)$ of characters\index{character of an algebra} of~$\CC_q[X,Y]$. 
As with the usual polynomial algebra~$\CC[X,Y]$,
a character $\chi: \CC_q[X,Y] \to \CC$ is determined by its values $\chi(X) = x$ and $\chi(Y) = y$ on the generators $X$ and~$Y$. 
Now the set $\Alg(\CC_q[X,Y],\CC)$ is in bijection with the set of points $(x,y) \in \CC^2$ such that $yx = qxy$.
In~$\CC$ the values $x$ and $y$ commute, so that $yx = qxy$ is equivalent to $(q-1)xy = 0$. 
When $q \neq 1$, then $\Alg(\CC_q[X,Y],\CC)$ can be identified with the subset of~$\CC^2$ defined by $xy = 0$;
this subset is the union of the lines $L_1 = \{0\} \times \CC$ and 
$L_2 = \CC \times \{0\} \subset \CC^2$. The coordinate algebra of $L_1 \cup L_2$ is 
the commutative algebra $\CC[X,Y]/(XY)$. We thus have bijections
\begin{equation*}
\Alg(\CC_q[X,Y],\CC) =
\begin{cases}
\Alg(\CC[X,Y],\CC)  = \CC^2 \qquad\qquad\quad \;\; \text{if} \; q = 1, \\
\noalign{\smallskip}
\Alg(\CC[X,Y]/(XY),\CC) = L_1 \cup L_2\quad \text{if} \; q \neq 1.
\end{cases}
\end{equation*}
This shows that from the point of view of characters, there is a jump when we pass from $q = 1$
to an arbitrary complex number~$q$.
Observe also that as a vector space, $\CC[X,Y]/(XY)$ has a basis given by $\{X^i\}_{i\geq 0} \cup \{Y^j\}_{j\geq 1}$;
this basis is clearly very different from the basis $\{X^i Y^j\}_{i,j \geq 0}$ of~$\CC_q[X,Y]$.

\begin{exercise}\label{exo-qplane-basis}
\emph{(A basis of the quantum plane)}

(a) Let $\tau$ and $\upsilon$ be the endomorphisms of the polynomial algebra~$\CC[t]$ defined on any
polynomial~$P(t)$ by 
$\tau(P(t)) = tP(t)$ and $\upsilon(P(t)) = P(qt)$.
Show that there is a unique algebra morphism $\rho: \CC_q[X,Y] \to \End(\CC[t])$ such that
$\rho(X) = \tau$ and $\rho(Y) = \upsilon$.

(b) Deduce that $\{X^i Y^j\}_{i,j \in \NN}$ is a basis of~$\CC_q[X,Y] $. 
Hint: use the morphism~$\rho$ to prove linear independence.
\end{exercise}

\subsubsection{Non-commutative spaces}\label{ssec-NCspace}

In view of the previous examples, non-com\-mu\-ta\-tive algebras\index{non-commutative algebra} 
will henceforth often be called \emph{non-commutative spaces}\index{non-commutative space}. 
The special case of the quantum plane shows that characters are not sufficient to characterize non-com\-mu\-ta\-tive spaces.
As written in the introduction of\,\cite{Pf},
\begin{quote}
... in noncommutative geometry there are no points.
\end{quote}
This is a significant difference with ordinary spaces.
Such a difference is also well explained in\,\cite[Sect.\,2]{Si}.

\subsection{Extending basic operations to non-commutative spaces}\label{ssec-operations}

We now show how to extend certain basic operations on spaces to the world of non-commutative spaces,
i.e. of non-necessarily commutative algebras.

\subsubsection{From maps to algebra homomorphisms}\label{ssec-NCmaps}

Let $\varphi: X\to Y$ be a map between algebraic varieties. 
Then we can define a map $\varphi^* : \OO(Y) \to \OO(X)$ by
\begin{equation}\label{def-pullback}
\varphi^*(u) = u \circ \varphi
\end{equation}
for all $u\in \OO(Y)$.
It is easy to check that $\varphi^*$ is a morphism of algebras.

If $\psi: Y \to Z$ is another map between algebraic varieties and $\psi^* : \OO(Z) \to \OO(Y)$ 
is the corresponding morphism of algebras, 
then we have the following equality of morphisms from~$\OO(Z)$
to~$\OO(X)$:
\begin{equation*}
(\psi \circ \varphi)^* = \varphi^* \circ \psi^*.
\end{equation*}

\subsubsection{From products to tensor products}\label{ssec-NCproduct}

Given algebraic varieties $X$, $Y$, we can consider their product $X \times Y$.
We denote by $\pi_X : X \times Y \to X$ and $\pi_Y : X \times Y \to Y$ the canonical projections.
The product $X \times Y$ satisfies the following universal property:
for all maps $\varphi_X: Z \to X$ and $\varphi_Y: Z \to Y$ from another algebraic variety~$Z$,
there exists a unique map $\varphi: Z \to X \times Y$ such that $\pi_X \circ \varphi = \varphi_X$
and $\pi_Y \circ \varphi = \varphi_Y$.

Applying the contravariant functor $\varphi \mapsto \varphi^*$ defined by\,\eqref{def-pullback}, we see that 
the coordinate algebra $\OO(X \times Y)$ comes with two algebra morphisms 
\begin{equation*}
\varphi_X^*: \OO(X) \to \OO(X \times Y) \quad\text{and}\quad
\varphi_Y^*: \OO(Y) \to \OO(X \times Y)
\end{equation*}
satisfying a universal property that is easily deduced from the universal property of the product~$X\times Y$.
It follows that we have a canonical algebra isomorphism
\begin{equation}\label{eq-tensor}
\OO(X \times Y) \cong \OO(X) \otimes \OO(Y),
\end{equation}
where $\OO(X) \otimes \OO(Y)$  is the tensor product of the algebras $\OO(X)$ and~$\OO(Y)$.

Let us recall that the \emph{tensor product}\index{tensor product of vector spaces} $U \otimes V$ 
of two complex vector spaces $U$ and~$V$ consists
of $\CC$-linear combinations of symbols of the form $u\otimes v$, where $u\in U$ and $v\in V$.
By definition, the map $U\times V \to U\otimes V$ sending each couple $(u,v) \in U\times V$ to~$u\otimes v$
is $\CC$-bilinear, i.e. $\CC$-linear both in~$u$ and in~$v$. 
It satisfies the following universal property: for any $\CC$-bilinear map $f: U \times V \to W$ to another vector space~$W$,
there is a unique $\CC$-linear map $\widetilde{f}: UÊ\otimes V \to W$ such that $f(u,v) = \widetilde{f}(u\otimes v)$
for all $(u,v) \in U\times V$. 
Moreover, if $\{u_i \}_{i\in I}$ is a basis of~$U$ and $\{v_j \}_{j\in J}$ is a basis of~$V$,
then 
\[
\{u_i\otimes v_j \}_{(i,j)\in I \times J}
\]
is a basis of~$U \otimes V$. As a consequence, $\dim(U \otimes V) = \dim(U) \, \dim(V)$.

If $A$, $B$ are (not necessarily commutative) algebras, then their tensor product\index{tensor product of algebras} 
$A\otimes B$ carries a structure of algebra with multiplication determined by
\begin{equation*}
(a_1 \otimes b_1) (a_2 \otimes b_2) = a_1a_2 \otimes b_1b_2
\end{equation*}
for all $a_1, a_2 \in A$ and $b_1, b_2 \in B$. The algebra $A\otimes B$ has a unit\index{unit} given by 
\begin{equation*}
1_{A\otimes B} = 1_A \otimes 1_B .
\end{equation*}

The tensor product of algebras satisfies the following universal property.

\begin{proposition}\label{prop-tens-alg}
Let $f: A\to C$ and $g : B\to C$ be morphisms of algebras such that $f(a) g(b) = g(b) f(a)$ in~$C$ for all $a\in A$ and $b\in B$.
Then there exists a unique morphism of algebras $f\otimes g: A\otimes B \to C$ 
such that $(f\otimes g)(a\otimes b) = f(a) g(b)$ for all $a\in A$ and $b\in B$.
\end{proposition}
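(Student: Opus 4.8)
The plan is to construct the map $f\otimes g$ at the level of vector spaces first, using the universal property of the tensor product of vector spaces, and then verify that it is multiplicative and unital. More precisely, the assignment $(a,b)\mapsto f(a)g(b)$ defines a $\CC$-bilinear map $A\times B\to C$: bilinearity in $a$ follows from the linearity of $f$ together with the distributivity of multiplication in~$C$, and similarly for~$b$. By the universal property of $A\otimes B$ as a vector space, there is a unique $\CC$-linear map $f\otimes g: A\otimes B\to C$ with $(f\otimes g)(a\otimes b)=f(a)g(b)$ for all $a\in A$, $b\in B$.

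Next I would check that this linear map is a morphism of algebras. For the unit, $(f\otimes g)(1_A\otimes 1_B)=f(1_A)g(1_B)=1_C\cdot 1_C=1_C$, using that $f$ and $g$ preserve units. For multiplicativity it suffices, by bilinearity of both the multiplication in $A\otimes B$ and that in $C$ (and by $\CC$-linearity of $f\otimes g$), to check the identity $(f\otimes g)\big((a_1\otimes b_1)(a_2\otimes b_2)\big)=(f\otimes g)(a_1\otimes b_1)\,(f\otimes g)(a_2\otimes b_2)$ on elementary tensors. The left-hand side equals $(f\otimes g)(a_1a_2\otimes b_1b_2)=f(a_1a_2)g(b_1b_2)=f(a_1)f(a_2)g(b_1)g(b_2)$, while the right-hand side is $f(a_1)g(b_1)f(a_2)g(b_2)$. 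These agree precisely because the commutation hypothesis $g(b_1)f(a_2)=f(a_2)g(b_1)$ lets us swap the two middle factors in~$C$.

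Finally, uniqueness is immediate: any algebra morphism $h: A\otimes B\to C$ with $h(a\otimes b)=f(a)g(b)$ agrees with $f\otimes g$ on the elementary tensors $a\otimes b$, and since these span $A\otimes B$ as a $\CC$-vector space and both maps are $\CC$-linear, $h=f\otimes g$. I do not expect any real obstacle here; the only point that genuinely uses a hypothesis beyond formal nonsense is the swap of $f(a_2)$ and $g(b_1)$ in the multiplicativity check, which is exactly where the assumption that the images of $f$ and $g$ commute in~$C$ enters. Everything else is a routine unwinding of the universal property of $\otimes$ and of the definition of the product on $A\otimes B$ recalled just above the statement.
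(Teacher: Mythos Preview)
Your argument is correct and is precisely the intended route: the paper does not supply a proof of this proposition but defers it to an exercise immediately afterwards, and your solution is exactly the standard verification one expects---build the linear map from the universal property of the tensor product, check unitality and multiplicativity on elementary tensors, and observe that the commutation hypothesis is used exactly once, to swap $g(b_1)$ and~$f(a_2)$.
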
 

Using the notation $\Alg(A_1,A_2)$ for the set of morphisms of algebras from $A_1$ to~$A_2$,
we can paraphrase the previous proposition by saying that
$\Alg(A\otimes B,C)$ is isomorphic to the subset of $\Alg(A,C) \times \Alg(B,C)$ consisting of all pairs $(f,g)$
of morphisms whose images commute in~$C$. In particular, if $C$ is commutative, then
\begin{equation*}
\Alg(A\otimes B,C) \cong \Alg(A,C) \times \Alg(B,C) .
\end{equation*}

For this reason we may consider the tensor product of algebras as the non-commutative analogue of the product of spaces.

\begin{exercise}
Prove Proposition\,\ref{prop-tens-alg}.
\end{exercise}

%%%
\section{From groups to Hopf algebras}\label{sec-Hopf}

In this section we introduce the concept of a Hopf algebra and illustrate it 
with several examples which will show up repeatedly in these notes.
For general references on Hopf algebras, see\,\cite{Ab, Ka-QG, Mo, Sw}.

\subsection{Algebraic groups}\label{ssec-alggr}

Let $G$ be an \emph{algebraic group}\index{algebraic group}, 
i.e.\ an algebraic variety equipped with the structure of a group such that
the product map $\mu :G\times G \to G$ is a map of algebraic varieties.

The basic example of an algebraic group is the \emph{general linear group}\index{general linear group}~$GL_N(\CC)$,
which consists of all invertible $N \times N$-matrices with complex entries, equipped with the usual matrix product.
This product is given by polynomial formulas in the entries.
The coordinate algebra\index{coordinate algebra} of~$GL_N(\CC)$ is the algebra
\begin{equation}\label{def-OGL}
\OO(GL_N(\CC)) = \CC[t, (a_{i,j})_{1 \leq i, j\leq N}] / (t\det(a_{i,j}) -1). 
\end{equation}

Any subgroup of $GL_N(\CC)$ defined by the vanishing of polynomials is also an algebraic group. For instance,
the \emph{special linear group}\index{special linear group}~$SL_N(\CC)$, 
which consists of all $N \times N$-matrices whose determinant is~$1$,
is an algebraic group. Its coordinate algebra is the algebra
\begin{equation*}
\OO(SL_N(\CC)) = \CC[(a_{i,j})_{1 \leq i, j\leq N}] / (\det(a_{i,j}) -1). 
\end{equation*}
It is obtained from $\OO(GL_N(\CC))$ by setting $t = 1$.

By\,\eqref{def-pullback} the product\index{product} map $\mu :G\times G \to G$ 
of an algebraic group induces a morphism of algebras $\mu^* : \OO(G) \to \OO(G\times G)$. 
We can compose~$\mu^*$ with the canonical isomorphism $\OO(G \times G) \cong \OO(G) \otimes \OO(G)$
(see\,\eqref{eq-tensor}), which yields a morphism of algebras 
\begin{equation*}
\Delta: \OO(G) \to \OO(G) \otimes \OO(G),
\end{equation*}
which we call the \emph{coproduct}\index{coproduct} of~$\OO(G)$.

The product~$\mu$ of~$G$ is \emph{associative}, which means that we have
\begin{equation*}
\mu\left (\mu(g_1,g_2),g_3 \right) = \mu \left(g_1,\mu(g_2,g_3) \right)
\end{equation*}
for all $g_1,g_2,g_3 \in G$.
This identity, which reads 
$\mu\circ (\mu \otimes \id) = \mu\circ (\id\otimes \mu)$,
transposes to the following \emph{coassociativity}\index{coassociativity} identity for the coproduct:
\begin{equation}\label{eq-coassoc}
(\Delta \otimes \id) \circ \Delta = (\id \otimes \, \Delta) \circ \Delta.
\end{equation}

Similarly, the \emph{unit}\index{unit}~$e$ of the group~$G$, which can be seen as a homomorphism $\bar{e} : \{1\} \to G$
(sending~$1$ to~$e$), induces the morphism of algebras 
\begin{equation*}
\eps = \bar{e}^* : \OO(G) \to \OO(\{1\}) = \CC,
\end{equation*}
which we call the \emph{counit}\index{counit} of~$\OO(G)$.
The identities $\mu(e,g) = g = \mu(g,e)$ ($g\in G$) read 
\[
\mu\circ ( \bar{e}\otimes \id)= \id= \mu \circ (\id \otimes  \bar{e}),
\]
where we have identified  $\{1\}\times G$ and $G\times \{1\}$ with~$G$.
They transpose to the \emph{counitality}\index{counitality} identities
\begin{equation}\label{eq-counit}
(\eps \otimes \id) \circ \Delta = \id = (\id \otimes \, \eps) \circ \Delta : \OO(G) \to \OO(G),
\end{equation}
where we use the natural identifications $\CC \otimes \, \OO(G) \cong \OO(G)$ and $\OO(G) \otimes \, \CC \cong \OO(G)$.

In a group~$G$ any element~$g$ possesses an \emph{inverse}, i.e.\ an element~$g^{-1}$
such that 
\begin{equation}\label{eq-inv}
\mu(g,g^{-1}) = e = \mu(g^{-1},g).
\end{equation}
The map $\inv: g \mapsto g^{-1}$ induces a map $S = \inv^* : \OO(G) \to \OO(G)$,
which we call the \emph{antipode}\index{antipode} of~$\OO(G)$. The identities\,\eqref{eq-inv} imply
identities for the antipode, which we shall display in Sect.\,\ref{ssec-Hopfalg}.

When $G = GL_N(\CC)$ is the general linear group, 
the coproduct of the coordinate algebra~$\OO(GL_N(\CC))$ 
is defined on the generators~$t$, $a_{i,j}$ of~$\OO(GL_N(\CC))$ by
\begin{equation}\label{eq-coprod-GL}
\Delta(t) = t \otimes t
\quad\text{and}\quad
\Delta(a_{i,j}) = \sum_{k=1}^N \, a_{i,k} \otimes a_{k,j}
\end{equation}
and the counit by 
\begin{equation}\label{eq-counit-GL}
\eps(t) = 1
\quad\text{and}\quad
\eps(a_{i,j}) = \delta_{i,j}
\end{equation}
for all $i,j \in \{1, \ldots, N\}$.
For the antipode, let~$A$ be the $N\times N$-matrix $A = (a_{i,j})_{1 \leq i, j\leq N}$.
Denote by~$A_{i,j}$ the determinant of the $(N-1) \times (N -1)$ matrix obtained from deleting Row~$i$ and Column~$j$
of~$A$. Then for each generator~$a_{i,j}$ ($i,j \in \{1, \ldots, N\}$) we have
\begin{equation}\label{eq-anti-GL}
S(a_{i,j}) = (-1)^{i+j}  \, \frac{A_{j,i}}{\det(A)} \, .
\end{equation}
By the definition\,\eqref{def-OGL} the generator~$t$ is invertible with inverse $t^{-1} = \det(A)$
and its antipode is given by~$S(t) = t^{-1} = \det(A)$.
 
The values of $\Delta(a_{i,j})$, $\eps(a_{i,j})$ and $S(a_{i,j})$ given in
Formulas\,\eqref{eq-coprod-GL}--\eqref{eq-anti-GL} above also 
determine the coproduct, counit and antipode of~$\OO(SL_N(\CC))$, where $SL_N(\CC)$ is the special linear group.

\begin{exercise}
Prove the claims of this section.
\end{exercise}

\subsection{Bialgebras}\label{ssec-bialg}

Before defining Hopf algebras, we present the concept of a bialgebra.

\begin{definition}\label{def-bialgebra}
A \emph{bialgebra}\index{bialgebra} is an associative unital algebra equipped with two linear maps
$\Delta : H \to H \otimes H$ and $\eps: H \to \CC$
satisfying the following conditions:
\begin{enumerate}
\item[(i)]
The maps $\Delta$ and $\eps$ are morphisms of algebras.

\item[(ii)]
We have the following equalities:
\begin{equation}\label{eq-coass}\index{coassociativity}
(\Delta \otimes \id) \circ \Delta = (\id \otimes \, \Delta) \circ \Delta.
\end{equation}
and, identifying $\CC \otimes H$ and $H \otimes \CC$ with~$H$,
\begin{equation}\label{eq-coun}\index{counitality}
(\eps \otimes \id) \circ \Delta = \id = (\id \otimes \, \eps) \circ \Delta.
\end{equation}
\end{enumerate}
\end{definition}

The map $\Delta$ is called the \emph{coproduct}\index{coproduct} of~$H$ and $\eps$ is its \emph{counit}\index{counit}.
It is sometimes convenient to denote the product\index{product} of the bialgebra~$H$ by $\mu: H \otimes H \to H$ and 
to introduce the unique morphism of algebras $\eta: \CC \to H$, 
which we call the \emph{unit}\index{unit} of~$H$; we have $\eta(1) = 1_H$.

Given a bialgebra~$H$ with coproduct~$\Delta$, we define the \emph{opposite coproduct}\index{coproduct!opposite} 
\[
\Delta^{\op}: H \to H \otimes H
\]
by
$\Delta^{\op} = \tau \circ \Delta$, where $\tau: H \otimes H \to H \otimes H$ is the \emph{flip}
defined by $\tau(x \otimes y) = y \otimes x$ for all $x,y \in H$.
We say that $H$ is \emph{cocommutative}\index{Hopf algebra!cocommutative}\index{bialgebra!cocommutative} 
if $\Delta^{\op} = \Delta$.

\begin{exercise}\label{exo-bi-not-Hopf}
Let $\CC[t]$ be the polynomial algebra in one variable~$t$. Show that $\CC[t]$ is a bialgebra
with coproduct~$\Delta$ and counit~$\eps$ determined by $\Delta(t) = t\otimes t$ and $\eps(t) = 1$.
Check that this bialgebra is cocommutative.
\end{exercise}

\begin{exercise}\label{exo-dualHopf}
(a) Let $H$ be a bialgebra with coproduct $\Delta$ and counit~$\eps$.
Consider the \emph{linear dual}\index{bialgebra!dual of} $H\, \check{} = \Hom(H,\CC)$ of~$H$. 
Define a product\index{product}
$\mu\, \check{} : H\, \check{} \otimes H\, \check{} \to H\, \check{}\, $ 
for all $x\in H$ and $\alpha, \beta \in H\, \check{}$ by
\begin{equation}\label{dual-prod}
\mu\, \check{}\, (\alpha \otimes \beta)(x) = (\alpha \otimes \beta)(\Delta(x))
= \sum_i\, \alpha(x'_i) \, \beta(x''_i) , 
\end{equation}
when $\Delta(x) = \sum_i\, x'_i \otimes x''_i$. Show that 
\begin{enumerate}
\item[(i)]
$\mu\, \check{}\,$ is an associative product with unit\index{unit} equal to $\eps \in H\, \check{}$,

\item[(ii)]
$H\, \check{}\,$ is cocommutative if $H$ is a commutative algebra.
\end{enumerate}

(b) Now assume that $H$ is finite-dimensional as a vector space over~$\CC$. 
\begin{enumerate}
\item[(i)]
Show that
$H\, \check{}\, $ is a bialgebra with coproduct $\Delta\, \check{} : H\, \check{} \to H\, \check{} \otimes H\, \check{}\,$
and counit $\eps\, \check{}: H\, \check{} \to \CC$ defined by
\begin{equation*}
\Delta\, \check{}\, (\alpha)(x\otimes y) = \alpha(xy) 
\end{equation*}
and $\eps\, \check{}\, (\alpha) = \alpha(1_H)$ for all $\alpha \in H\, \check{}$.

\item[(ii)] 
Prove that $H\, \check{}\, $ is commutative if $H$ is cocommutative.
\end{enumerate}
\end{exercise}

\begin{remark}\label{rem-restricted}
It follows from Exercise\,\ref{exo-dualHopf} that the dual of a finite-dimensional bialgebra is another (finite-dimensional) bialgebra. 
To extend such a duality to the case when $H$ is an infinite-dimensional bialgebra, 
we have to replace the linear dual $H\, \check{}\,$ 
by the \emph{restricted dual}\index{Hopf algebra!restricted dual of}~$H^{\circ}$ defined by
\begin{equation*}
H^{\circ} = \left\{ \alpha \in H\, \check{} \; |\, \alpha(I) = 0 \;\textrm{for some ideal} \; I \; \textrm{such that} \;
\dim\,H/I < \infty \right\} .
\end{equation*}
See \cite[Sect.\,1.2]{Mo} or \cite{Sw}. We have $H^{\circ} = H\, \check{}\,$ if~$\dim\, H < \infty$.
\end{remark}

\subsection{Hopf algebras}\label{ssec-Hopfalg}

Let $H$ be a bialgebra with product $\mu$, unit~$\eta$, coproduct~$\Delta$, and counit~$\eps$.
Given two linear endomorphisms $f$, $g$ of~$H$ we define a new linear endomorphism $f*g$ of~$H$ by
\begin{equation}\label{def-convol}
f*g = \mu \circ (f\otimes g) \circ \Delta \in \End(H) .
\end{equation}

We now define the concept of a Hopf algebra.

\begin{definition}\label{def-Hopfalg}
Let $H$ be a bialgebra.

(a) An \emph{antipode}\index{antipode} of~$H$ is a linear endomorphism~$S$ of~$H$ such that
\begin{equation}\label{eq-anti}
S * \id_H = \eta \circ \eps = \id_H * S.
\end{equation}

(b) A \emph{Hopf algebra}\index{Hopf algebra} is a bialgebra together with an antipode.

(c) A \emph{morphism of Hopf algebras}\index{Hopf algebra!morphism of} $f: H \to H'$ between Hopf algebras 
is a morphim of bialgebras such that 
\[
\Delta' \circ f = (f \otimes f) \circ \Delta, \quad 
\eps' \circ f = \eps,
\quad
S' \circ f = f \circ S, 
\]
where $\Delta$ (resp.~$\Delta'$) is the coproduct, $\eps$ (resp.~$\eps'$) is the counit
and $S$ (resp.~$S'$) is the antipode of~$H$ (resp.~of~$H'$).
\end{definition}

\begin{example}
If $G$ is an \emph{algebraic group}\index{algebraic group}, then its coordinate algebra\index{coordinate algebra}~$\OO(G)$
equip\-ped with the maps $\Delta$, $\eps$, and~$S$ defined in Sect.\,\ref{ssec-alggr}
is a Hopf algebra. Actually, the axioms of a Hopf algebra are derived from this example.
\end{example}

Hopf algebras have two important features which are worth emphasizing:
\begin{itemize}
\item 
The concept of Hopf algebras is \emph{self-dual}: 
the restricted dual~$H^{\circ}$ of a Hopf algebra~$H$ is again a Hopf algebra 
(see Exercises\,\ref{exo-dualHopf}\,(b) and\,\ref{exo-Hopf1} for finite-dimensional Hopf algebras). 
This duality allows also to extend the Pontryagin duality of abelian groups to non-abelian ones (see Exercise\,\ref{exo-duality}).

\item
The category of left $H$-modules, where $H$ is a Hopf algebra, is a \emph{tensor category}. 
Recall that a left $H$-module~$V$ is a vector space together with a bilinear map $H \times V \to V;
(x,v) \mapsto xv$ ($x, \in H, vÊ\in V$) such that
\begin{equation}\label{def-action}
(xy)v = x(y(v)) 
\quad\text{and}\quad
1_H v = v
\end{equation}
for all $x,y\in H$ and $v\in V$. The map $(x,v) \mapsto xv$ is called the action.

If $V_1$ and $V_2$ are left $H$-modules, then so is the tensor product $V_1 \otimes V_2$.
Indeed one defines an action of~$H$ on~$V_1 \otimes V_2$ by
\begin{equation}\label{act-tensor}
x(v_1 \otimes v_2) = \Delta(x) (v_1 \otimes v_2) = \sum_{i}\, x'_iv_1 \otimes x''_iv_2
\end{equation}
if $\Delta(x) = \sum_{i}\, x'_i \otimes x''_i$.

\begin{exercise}
Check that the action\,\eqref{act-tensor} of $H$ on~$V_1 \otimes V_2$ satisfies\,\eqref{def-action}.
\end{exercise}
\end{itemize}

\begin{remark}
In many cases, for instance when $H$ is a quantum group as in~Sect.\,\ref{sec-SL2},
$V_1 \otimes V_2$ is naturally isomorphic as an $H$-module to $V_2 \otimes V_1$. 
It is this feature that leads to braid group representations and knot invariants. 
We will not say more about this; see\,\cite[Part Three]{Ka-QG} for details on this vast subject.
\end{remark}

\begin{exercise}
Show that the product~$*$ on the algebra~$\End(H)$ of linear endomorphisms of a Hopf algebra~$H$ given by\,\eqref{def-convol}
is associative with unit\index{unit} equal to~$\eta \circ \eps$. Prove that an antipode is unique if it exists.
\end{exercise}

\begin{exercise}\label{exo-Hopf1}
Show that the dual\index{Hopf algebra!dual of} $H\, \check{}$ of a finite-dimensional Hopf algebra~$H$ is a Hopf algebra.
\end{exercise}

\begin{exercise}
\emph{(A bialgebra without antipode)}
Let $\CC[t]$ be the bialgebra considered in Exercise\,\ref{exo-bi-not-Hopf}.
Prove that it has no antipode [hint: apply\,\eqref{eq-anti} to the element~$t$].
\end{exercise}

The following properties of the antipode of a Hopf algebra are worth mentioning (see\,\cite[III.3]{Ka-QG} or\,\cite{Sw}).

\begin{proposition}\label{prop-anti}
Let $H$ be a Hopf algebra with coproduct~$\Delta$, counit~$\eps$, and antipode~$S$.

(a) The antipode $S$ is an anti-morphism of algebras, i.e., for all $x,y \in H$,
\begin{equation*}
S(xy) = S(y) S(x) \quad \textrm{and} \quad S(1) = 1,
\end{equation*}
and we have
\begin{equation*}
(S \otimes S) \circ \Delta = \Delta^{\op} \circ S 
\quad \textrm{and} \quad
\eps \circ S = \eps.
\end{equation*}

(b) If $H$ is commutative or cocommutative, then the antipode~$S$ is an involution, i.e. $S^2 = \id_H$.
\end{proposition}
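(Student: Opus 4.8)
The plan is to use Sweedler notation throughout, writing $\Delta(x) = \sum x_{(1)} \otimes x_{(2)}$, and to derive each assertion from the defining identity~\eqref{eq-anti} together with coassociativity~\eqref{eq-coass} and counitality~\eqref{eq-coun}. The guiding principle is that $\End(H)$ is an associative algebra under the convolution product~$*$ of~\eqref{def-convol} with unit $\eta\circ\eps$, and $S$ is by definition a two-sided $*$-inverse of $\id_H$; so uniqueness-of-inverses arguments in this monoid will do most of the work once I reformulate the desired statements in convolution-theoretic terms.

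For part~(a), to prove $S$ is an anti-algebra morphism I would introduce the two linear maps $\mu\circ(S\otimes S)\circ\tau$ and $S\circ\mu$ from $H\otimes H$ to $H$ and show that both are $*$-inverses of $\mu$ in the convolution algebra $\Hom(H\otimes H, H)$, where $H\otimes H$ carries its tensor-product bialgebra structure; since convolution inverses are unique, the two maps coincide, which unwinds to $S(xy) = S(y)S(x)$. The computation that each is a left (resp.\ right) inverse of $\mu$ is a direct manipulation using~\eqref{eq-anti}, the multiplicativity of $\Delta$, and counitality. The identity $S(1)=1$ follows by applying $S*\id_H = \eta\circ\eps$ to $1$, using $\Delta(1)=1\otimes 1$ and $\eps(1)=1$. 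For $(S\otimes S)\circ\Delta = \Delta^{\op}\circ S$, the analogous trick works: I show that both $(S\otimes S)\circ\Delta$ and $\tau\circ\Delta\circ S = \Delta^{\op}\circ S$ are convolution inverses of $\Delta$ in $\Hom(H, H\otimes H)$, again invoking uniqueness. Finally $\eps\circ S = \eps$ is immediate: apply $\eps$ to $S*\id_H = \eta\circ\eps$ and use $\eps\circ\eta = \id_{\CC}$ together with counitality to get $\eps(S(x)) = \eps(x)$.

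For part~(b), in the cocommutative case I would show that $S$ is a $*$-inverse of $S^2$ — equivalently, apply $S$ to the identity $\sum S(x_{(1)})x_{(2)} = \eps(x)1$, use that $S$ is an anti-morphism (from part~(a)) to get $\sum S(x_{(2)})S^2(x_{(1)}) = \eps(x)1$, and then invoke cocommutativity $\sum x_{(1)}\otimes x_{(2)} = \sum x_{(2)}\otimes x_{(1)}$ to rewrite this as $S^2 * S = \eta\circ\eps$; since $\id_H * S = \eta\circ\eps$ as well, uniqueness of the convolution inverse of $S$ gives $S^2 = \id_H$. The commutative case is dual: here $S$ being an anti-morphism means it is actually an algebra morphism, so applying $S$ to $\sum x_{(1)}S(x_{(2)}) = \eps(x)1$ and using $S(ab)=S(a)S(b)$ yields $\sum S(x_{(1)})S^2(x_{(2)}) = \eps(x)1$, i.e.\ $S * S^2 = \eta\circ\eps$, and again uniqueness forces $S^2 = \id_H$.

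The main obstacle is the bookkeeping in part~(a): one must set up the tensor-product bialgebra $H\otimes H$ correctly (its coproduct is $(\id\otimes\tau\otimes\id)\circ(\Delta\otimes\Delta)$) and be careful about where the flip $\tau$ enters, so that the convolution computations in $\Hom(H\otimes H, H)$ and $\Hom(H, H\otimes H)$ actually close up. Once the convolution-algebra framework is in place, every individual verification is a short diagram chase with no genuine difficulty; the content is entirely in recognizing that all four identities in~(a) and both cases of~(b) are instances of ``two candidate inverses of the same element in an associative monoid must agree.'' For full details I refer to~\cite[III.3]{Ka-QG} or~\cite{Sw}.
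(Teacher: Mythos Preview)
The paper does not actually prove this proposition: it merely states the result and refers the reader to \cite[III.3]{Ka-QG} and \cite{Sw}. Your proposal is correct and is precisely the standard argument found in those references---namely, exploiting uniqueness of inverses in the convolution monoids $\Hom(H\otimes H,H)$, $\Hom(H,H\otimes H)$, and $\End(H)$---so there is nothing to compare against beyond confirming that your sketch matches the cited sources.

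One small slip: in the cocommutative case, after applying $S$ and cocommutativity you obtain $\sum S(x_{(1)})S^2(x_{(2)}) = \eps(x)1$, which is $S * S^2 = \eta\circ\eps$, not $S^2 * S$ as you wrote. The conclusion is unaffected since $\id_H$ is a two-sided $*$-inverse of~$S$, but you should correct the order.
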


Another useful concept is the following. An element $x$ of a Hopf algebra~$H$ is called 
\emph{group-like}\index{group-like element} if 
\begin{equation}\label{def-grouplike}
\Delta(x) = x \otimes x \quad \text{and} \quad \eps(x) = 1.
\end{equation}
Let $\Gr(H)$ be the set of group-like elements of~$H$.

\begin{proposition}\label{prop-GrH}
The set $\Gr(H)$\index{$\Gr(H)$} of group-like elements of~$H$ is a group under the product in~$H$. 
The inverse of an element~$x$ in~$\Gr(H)$ is~$S(x)$.
\end{proposition}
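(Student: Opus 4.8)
The plan is to verify successively that $\Gr(H)$ is closed under multiplication, contains the unit, and is closed under inversion, with $S(x)$ providing the inverse of $x$. First I would check closure under the product of $H$. If $x,y \in \Gr(H)$, then since $\Delta$ is an algebra morphism we have $\Delta(xy) = \Delta(x)\Delta(y) = (x\otimes x)(y\otimes y) = xy \otimes xy$, and likewise $\eps(xy) = \eps(x)\eps(y) = 1$; hence $xy \in \Gr(H)$. Next, the unit $1_H$ is group-like: $\Delta(1_H) = 1_H \otimes 1_H$ and $\eps(1_H)=1$ because $\Delta$ and $\eps$ are \emph{unital} algebra morphisms. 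So $1_H \in \Gr(H)$ and it clearly acts as a two-sided identity. Associativity of the product on $\Gr(H)$ is inherited from $H$.

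The substantive step is showing that a group-like $x$ is invertible in $H$ with inverse $S(x)$, and that $S(x)$ is itself group-like. For invertibility, I would apply the antipode axiom \eqref{eq-anti}. Writing out $S * \id_H = \eta\circ\eps$ evaluated at $x$, and using $\Delta(x) = x\otimes x$, the convolution $S*\id_H$ sends $x$ to $\mu\circ(S\otimes\id_H)(x\otimes x) = S(x)\,x$; the right-hand side $\eta\circ\eps$ sends $x$ to $\eps(x)\,1_H = 1_H$. Thus $S(x)\,x = 1_H$, and symmetrically from $\id_H * S = \eta\circ\eps$ we get $x\,S(x) = 1_H$. Hence $x$ is a two-sided unit with $x^{-1} = S(x)$.

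It remains to see that $S(x) \in \Gr(H)$, so that $\Gr(H)$ is genuinely closed under inversion and is therefore a group. For this I would invoke Proposition~\ref{prop-anti}(a): $\eps\circ S = \eps$ gives $\eps(S(x)) = \eps(x) = 1$, and $(S\otimes S)\circ\Delta = \Delta^{\op}\circ S$ gives $\Delta(S(x)) = \Delta^{\op}(S(x)) = \tau\bigl((S\otimes S)(x\otimes x)\bigr) = \tau(S(x)\otimes S(x)) = S(x)\otimes S(x)$. So $S(x)$ is group-like, completing the proof. I expect no real obstacle here; the only point that requires care is making sure the antipode axiom is unwound correctly on the element $x\otimes x$, and that the properties of $S$ from Proposition~\ref{prop-anti} are available — both of which are supplied by the preceding material.
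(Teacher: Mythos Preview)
Your proof is correct and follows essentially the same approach as the paper's: closure under product via multiplicativity of $\Delta$ and $\eps$, invertibility of $x$ via the antipode axiom applied to $\Delta(x)=x\otimes x$, and group-likeness of $S(x)$ via Proposition~\ref{prop-anti}(a). The only cosmetic wrinkle is that your chain $\Delta(S(x)) = \Delta^{\op}(S(x)) = \cdots$ asserts the first equality before it is justified; it would read more cleanly to compute $\Delta^{\op}(S(x)) = (S\otimes S)(x\otimes x) = S(x)\otimes S(x)$ first and then apply $\tau$, exactly as the paper does.
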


\begin{proof}
Let $x,y\in H$ be group-like elements. Since $\Delta$ and $\eps$ are morphisms of algebras, we have
\begin{equation*}
\Delta(xy) = \Delta(x) \Delta(y) = (x \otimes x) (y \otimes y ) = xy \otimes xy
\end{equation*}
and $\eps(xy) = \eps(x) \eps(y) = 1$. This shows that $\Gr(H)$ is preserved under the product.
Clearly, the unit~$1$ of~$H$ is group-like and is a unit for the product in~$\Gr(H)$.

Applying\,\eqref{eq-anti} to a group-like element~$x$, we obtain $S(x) x = 1 = x S(x)$,
which shows that $S(x)$ is the inverse of~$x$. To conclude that $\Gr(H)$ is a group, it remains to check that
$S(x)$ is group-like. Indeed, by Proposition\,\ref{prop-anti}\,(a),
\begin{equation*}
\Delta^{\op}(S(x)) = (S \otimes S)(\Delta(x)) = S(x) \otimes S(x),
\end{equation*}
which implies $\Delta(S(x)) = S(x) \otimes S(x)$. We also have 
$\eps(S(x)) = \eps(x) = 1$. Thus, $S(x)$ is group-like.
\qed
\end{proof}

Examples of group-like elements and computations of~$\Gr(H)$ will be given in Exercise\,\ref{exo-GrH} below.

\subsection{Examples of Hopf algebras from finite groups}\label{ssec-group} 

To familiarize the reader with the concept of a Hopf algebra, we now present the following two basic examples,
both constructed from a group.

\subsubsection{The function algebra of a finite group}\label{sssec-OG}\index{function algebra}

Let $G$ be a finite group with unit~$e$ 
and $\OO(G)$ be its function algebra, as defined in Sect.\,\ref{sssec-set}.
It is a Hopf algebra with coproduct~$\Delta$, counit~$\eps$, and antipode~$S$ given by
\begin{equation}\label{eq-defOG}
\Delta(u)(g,h) = u(gh), \quad\;\; \eps(u) = u(e), \quad\;\; S(u)(g) = u(g^{-1})
\end{equation}
for all $g,h \in G$ and $u \in \OO(G)$. Here we have identified $\OO(G) \otimes \OO(G)$
with the function algebra~$\OO(G\times G)$ of the product group $G\times G$.

We can also express $\Delta$, $\eps$, and~$S$ in terms of the $\delta$-functions\index{$\delta$-function} 
introduced in \emph{loc.\ cit.} Namely we have
\begin{equation*}
\Delta(\delta_g) = \sum_{h\in G} \, \delta_h \otimes \delta_{h^{-1}g}, \quad
S(\delta_g) = \delta_{g^{-1}}, \quad
\eps(\delta_g) =
\begin{cases}
1 & \text{if}\;  g = e,\\
0 & \text{otherwise.}
\end{cases}
\end{equation*}

Since the inverse map $g\mapsto g^{-1}$ in a group is an involution, it follows from\,\eqref{eq-defOG}
that the antipode~$S$ is an involution as well, which is in agreement with Proposition\,\ref{prop-anti}\,(b)
applied to the \emph{commutative}\index{algebra!commutative} Hopf algebra~$\OO(G)$.

\subsubsection{The convolution algebra of a group}\label{sssec-CG}

Let $G$ now be an arbitrary group, not necessarily finite. We define $\CC[G]$ to be the vector space
spanned by the elements of~$G$. This means that any element of~$\CC[G]$
is a linear combination of the form
\begin{equation*}
\sum_{g\in G}\, \lambda_g \, g,
\end{equation*}
where the coefficients~$\lambda_g$ are complex numbers, all of which are zero except for a finite number.
We also assume that the set~$\{g\}_{g\in G}$ is a basis of~$\CC[G]$, which is equivalent to the implication
\begin{equation*}
\left(\sum_{g\in G}\, \lambda_g \, g = 0 \right)\; \Rightarrow \;\left(\lambda_g = 0 \;\text{for all} \; g\in G \right).
\end{equation*}

The vector space $\CC[G]$ is equipped with a product,\index{product}
often called the \emph{convolution product}\index{convolution product},
defined by the formula
\begin{equation*}
\left(\sum_{g\in G}\, \lambda_g \, g \right) \left(\sum_{g\in G}\, \mu_g \, g \right)
= \sum_{g\in G} \left(\sum_{h\in G}\, \lambda_h \, \mu_{h^{-1}g} \right) g .
\end{equation*}
The convolution product possesses a unit\index{unit}, which is $1_{\CC[G]} = e$, where $e$ is the unit of the group~$G$.
The algebra~$\CC[G]$ is called the \emph{convolution algebra}\index{convolution algebra} of~$G$, 
or simply the \emph{group algebra}\index{group algebra}\index{algebra!group} of~$G$.

We now claim that $\CC[G]$ is a Hopf algebra.  Its coproduct, counit, and antipode are given by
\begin{equation}\label{def-CGcoprod}
\Delta\left(\sum_{g\in G}\, \lambda_g \, g \right) = \sum_{g\in G}\, \lambda_g \, g\otimes g,
\qquad
\eps\left(\sum_{g\in G}\, \lambda_g \, g \right) = \sum_{g\in G}\, \lambda_g ,
\end{equation}
\begin{equation}\label{def-CGanti}
S\left(\sum_{g\in G}\, \lambda_g \, g \right) = \sum_{g\in G}\, \lambda_g \, g^{-1}
= \sum_{g\in G}\, \lambda_{g^{-1}} \, g.
\end{equation}

We can see on Formula\,\eqref{def-CGcoprod} for the coproduct that $\Delta^{\op} = \Delta$, 
which means that the Hopf algebra~$\CC[G]$ is \emph{cocommutative}\index{Hopf algebra!cocommutative}.
By Proposition\,\ref{prop-anti}\,(b) this implies that the antipode~$S$ is an involution, 
which can easily be seen on\,\eqref{def-CGanti}.

\begin{exercise}
Prove the claims in Sect.\,\ref{sssec-CG}.
\end{exercise}

\begin{exercise}\label{exo-duality}
\emph{(Duality between the function algebra and the group algebra)}\index{duality}
Let~$G$ be a \emph{finite} group. Define a bilinear form $\OO(G) \times \CC[G] \to \CC$
by 
\begin{equation*}
\left\langle u, \sum_{g\in G}\, \lambda_g \, g \right\rangle = \sum_{g\in G}\, \lambda_g \, u(g)
\end{equation*}
for all $u\in \OO(G)$, $g\in G$, and $\lambda_g \in \CC$.
It induces a linear map $\omega: \OO(G) \to \CC[G]\, \check{}\,$ by  $\omega(u) = \langle u, - \rangle$ ($u\in \OO(G)$).
Recall that $\CC[G]\, \check{}\,$ is the dual Hopf algebra of~$\CC[G]$, as defined in Exercise\,\ref{exo-dualHopf}.
Prove the following:
\begin{enumerate}
\item[(i)] 
The linear map $\omega: \OO(G) \to \CC[G]\, \check{}\, $ is bijective.

\item[(ii)] 
For all $u, v\in \OO(G)$, $g,h \in G$ we have
\begin{eqnarray*}
\langle uv, g \rangle & = & \langle u,g \rangle \, \langle v,g \,\rangle, \\
\langle \Delta(u) , g\otimes h \rangle & = & \langle u , gh\rangle, \\
\eps(u) & = & \langle u , e\rangle, \\
\langle S(u) , g\rangle & = & \langle u , g^{-1}\rangle .
\end{eqnarray*}

\item[(iii)] 
Deduce that $\omega: \OO(G) \to \CC[G]\, \check{}\,$ is an isomorphism of Hopf algebras.
\end{enumerate}
\end{exercise}

\begin{exercise}\label{exo-duality-abelian}
\emph{(Duality for finite abelian groups)}\index{duality}
Let $G$ be a finite \emph{abelian} group and $\widehat{G} = \Hom(G,\CC^{\times})$ be its group of characters. 
We recall that a \emph{character}\index{character of a group} of~$G$ is a group homomorphism from $G$ 
to the multiplicative group~$\CC^{\times}$ of non-zero complex numbers.
Since any element of~$G$ is of finite order, the values of a character of~$G$ are
roots of unity, which are complex numbers of modulus~$1$.

The set~$\widehat{G}$ is a group under pointwise multiplication; 
it is also called the \emph{Pontryagin dual}\index{Pontryagin dual} of~$G$.

(i) Show that $\widehat{G_1 \times G_2} \cong \widehat{G_1} \times \widehat{G_2}$ whenever 
$G_1$ and $G_2$ are finite abelian groups.

(ii) Determine all characters of a cyclic group of order~$n$ and conclude that 
there is a (non-unique) isomorphism $\widehat{\ZZZ/n} \cong \ZZZ/n$.

(iii) Deduce from\,(i) and\,(ii) that $\widehat{G} \cong G$ for any finite abelian group~$G$.
\end{exercise}

\begin{exercise}
\emph{(The Hopf algebras $\CC[G]$ and $\OO(\widehat{G})$)}
Let $G$ be a finite abelian group and $\widehat{G}$ be its group of characters, as defined in the previous exercise. 
Consider the function algebra~$\OO(\widehat{G})$, which is a Hopf algebra by Sect.\,\ref{sssec-OG}. 
Observe that this Hopf algebra is not only commutative, but also cocommutative since $\widehat{G}$ is abelian 
(see Formula\,\eqref{eq-defOG} for the coproduct).
On the other hand we have the cocommutative Hopf algebra~$\CC[G]$, 
which is commutative because $G$ is abelian.
Prove that the linear map $\CC[G] \to \OO(\widehat{G})$ 
defined by $g\in G \mapsto (\chi \mapsto \chi(g))_{\chi \in \widehat{G}}$
is an isomorphism of Hopf algebras.
\end{exercise}

\begin{exercise}\label{exo-GrH}
\emph{(Group-like elements)}\index{group-like element}

(a) Show that the only group-like elements of a group algebra~$\CC[G]$ are of the form
$\sum_{g\in G}\, \lambda_g \, g$, where all coefficients~$\lambda_g$ are zero, except for exactly one,
which is equal to~$1$.
Deduce a group isomorphism $\Gr(\CC[G]) \cong G$.

(b) Given a finite group~$G$, show that an element $u\in \OO(G)$ is group-like if and only if $u(e) = 1$
and $u(gh) = u(g) u(h)$ for all $g,h \in G$, i.e.\ if and only if $u$ is a character of~$G$.
Deduce a group isomorphism $\Gr(\OO(G)) \cong \widehat{G} = \Hom(G,\CC^{\times})$.
\end{exercise}

\subsection{The Heyneman--Sweedler sigma notation}\label{ssec-Sweedler-notation} 
\index{Heyneman--Sweedler notation}

Let $H$ be a Hopf algebra with coproduct~$\Delta$, counit~$\eps$ and antipode~$S$.
It is often convenient to use the following notation (due to Heyne\-man and Sweedler) for the image of
an element $x\in H$ under the coproduct\index{coproduct}:
\begin{equation*}
\Delta(x) = \sum_{(x)} \, x_{(1)} \otimes x_{(2)}.
\end{equation*}

The coassociativity identity\,\eqref{eq-coass}\index{coassociativity} expressed in this notation becomes
\begin{equation*}\label{HS-coprod}
\sum_{(x)} \, (x_{(1)})_{(1)} \otimes (x_{(1)})_{(2)} \otimes x_{(2)}
= \sum_{(x)} \, x_{(1)} \otimes (x_{(2)})_{(1)} \otimes (x_{(2)})_{(2)} .
\end{equation*}
To simplify we will express both sides of the previous equality by
\begin{equation*}\label{HS-coprod2}
\sum_{(x)} \, x_{(1)} \otimes x_{(2)} \otimes x_{(3)}.
\end{equation*}

In this notation the counitality identity\,\eqref{eq-coun}\index{counitality}\index{counit} becomes
\begin{equation}\label{HS-coun}
\sum_{(x)} \, \eps(x_{(1)}) \, x_{(2)} = x = \sum_{(x)} \, x_{(1)} \, \eps(x_{(2)}) .
\end{equation}
The defining equation\,\eqref{eq-anti} for the antipode\index{antipode} becomes
\begin{equation}\label{HS-anti}
\sum_{(x)} \, S(x_{(1)}) \, x_{(2)} = \eps(x)1 = \sum_{(x)} \, x_{(1)} \, S(x_{(2)}).
\end{equation}

The fact that $\Delta$ is a morphism of algebras can be expressed in this notation by
\begin{equation*}
\sum_{(xy)} \, (xy)_{(1)} \otimes (xy)_{(2)}
= \left( \sum_{(x)} \, x_{(1)} \otimes x_{(2)} \right) \left( \sum_{(y)} \, y_{(1)} \otimes y_{(2)} \right).
\end{equation*}
It is convenient to write the previous right-hand side simply as
\begin{equation*}
\sum_{(x) (y)} \, x_{(1)} y_{(1)} \otimes x_{(2)} y_{(2)}.
\end{equation*}

%%%
\section{Quantum groups associated with $SL_2(\CC)$}\label{sec-SL2} 

In this section we will present two Hopf algebras which were discovered in the 1980's and
are quantizations\index{quantization} of the special linear group~$SL_2(\CC)$
and of its Lie algebra~$\gs\gl(2)$, the latter consisting of all $2\times 2$-matrices of trace~$0$.
These Hopf algebras depend on a parameter~$q$. They have the particularity of being
neither commutative, nor cocommutative. They are instances of so-called \emph{quantum groups}\index{quantum group}. 

The term ``quantum group'' was introduced by Drinfeld in his Berkeley 1986 ICM ad\-dress~\cite{Dr2}\footnote{Drinfeld
along with other invited mathematicians from the Soviet Union was prevented by the Soviet authorities 
to attend the conference; in Drinfeld's absence his contribution was read by Cartier.}.
As we mentioned in the introduction, 
the discovery of quantum groups was a major event with spectacular applications in representation theory,
low-dimensional topology and theoretical physics.
The reader may learn more on quantum groups in the monographies \cite{CP, Ja, Ka-QG, KRT, Lu}.

\subsection{The quantum coordinate algebra of~$SL_2(\CC)$}\label{ssec-SL2} 

In Sect.\,\ref{ssec-alggr} we considered the special linear group\index{special linear group}~$SL_N(\CC)$
and its coordinate algebra\index{coordinate algebra}
\begin{equation*}
\OO(SL_N(\CC)) = \CC[(a_{i,j})_{1 \leq i, j\leq N}] / (\det(a_{i,j}) -1). 
\end{equation*}

Let us now restrict to the case $N=2$. For simplicity, set $\SL(2) = \OO(SL_2(\CC))$\index{$\SL(2)$}.
We have
\begin{equation*}
\SL(2) = \CC[a,b,c,d] / (ad-bc -1),
\end{equation*}
where $a = a_{1,1}$, $b = a_{1,2}$, $c = a_{2,1}$ and $d = a_{2,2}$.
We can rewrite Formulas\,\eqref{eq-coprod-GL}--\eqref{eq-anti-GL} for the coproduct~$\Delta$,
the counit~$\eps$ and the antipode~$S$ of the Hopf algebra~$\SL(2)$ in the following compact matrix form:
\begin{equation}\label{eq-coprod-SL}
\Delta
\begin{pmatrix}
a & b \\
c& d
\end{pmatrix}
= 
\begin{pmatrix}
a & b \\
c& d
\end{pmatrix}
\otimes \begin{pmatrix}
a & b \\
c & d
\end{pmatrix},
\end{equation}
\begin{equation}\label{eq-counit-SL}
\eps 
\begin{pmatrix}
a & b \\
c& d
\end{pmatrix} 
=
\begin{pmatrix}
1 & 0 \\
0 & 1
\end{pmatrix},
\end{equation}
\begin{equation}\label{eq-anti-SL}
S
\begin{pmatrix}
a & b \\
c& d
\end{pmatrix} 
=
\begin{pmatrix}
d & -b \\
-c & a
\end{pmatrix}.
\end{equation}
This is a compact version for the formulas
\begin{eqnarray*}
\Delta(a) = a \otimes a + b \otimes c, && \Delta(b) = a \otimes b + b \otimes d,\\
\Delta(c) =  c \otimes a + d \otimes c, && \Delta(d) =  c \otimes b + d \otimes d,
\end{eqnarray*}
\begin{equation*}
\eps(a) = \eps(d) = 1, \quad \eps(b) = \eps(d) = 0,
\end{equation*}
\begin{equation*}
S(a) = d, \quad S(b) = -b, \quad S(c) = -c, \quad S(d) = a.
\end{equation*}

The Hopf algebra~$\SL(2)$ is commutative, but not cocommutative, which can be seen for instance on 
the formula for $\Delta(a)$. Its antipode is clearly an involution, which follows of course from the fact that 
the map $\inv:g \mapsto g^{-1}$ is involutive.

Now we introduce a \emph{non-commutative deformation}\index{$q$-deformation}\index{deformation} 
of the Hopf algebra~$\SL(2)$. 
The deformation depends on a parameter~$q$ which we take to be a non-zero complex number.
Define $\SL_q(2)$\index{quantum $SL(2)$}\index{$\SL_q(2)$} to be 
the algebra generated by four generators $a$, $b$, $c$, $d$ subject to the relations
\begin{eqnarray*}
ba = qab, && ca = qac, \\
db = qbd, && dc = qcd, \\
bc = cb, && ad - da = (q^{-1} - q) \, bc,\\
ad - q^{-1} bc &=& 1.
\end{eqnarray*}

If $q=1$, the previous relations reduce to the fact that the generators $a$, $b$, $c$, $d$ commute and satisfy
the additional relation $ad-bc = 1$. Thus in this case, we have $\SL_q(2) = \SL(2)$.
If $q \neq 1$, then clearly $\SL_q(2)$ is not commutative, so it cannot be isomorphic to~$\SL(2)$.

The algebra $\SL_q(2)$ is a Hopf algebra\index{Hopf algebra}. Its coproduct~$\Delta$ and counit~$\eps$
are given by the same formulas as for~$\SL(2)$, namely by\,\eqref{eq-coprod-SL} and\,\eqref{eq-counit-SL}.
However the antipode~$S$ of~$\SL_q(2)$ is given, not by\,\eqref{eq-anti-SL},
but by another formula (depending on~$q$), namely in compact matrix form by
\begin{equation}\label{eq-anti-SLq}
S
\begin{pmatrix}
a & b \\
c& d
\end{pmatrix} 
=
\begin{pmatrix}
d & -qb \\
-q^{-1}c & a
\end{pmatrix}.
\end{equation}

The Hopf algebra\index{Hopf algebra!non-commutative non-cocommutative} $\SL_q(2)$ 
provides our first example of a Hopf algebra that is (for general~$q$) \emph{neither commutative}, 
\emph{nor cocommutative}, and with non-involutive antipode (for the latter, see Exercise\,\ref{antipode-order} below). 
The Hopf algebra $\SL_q(2)$ is a \emph{quantization}\index{quantization} of the coordinate algebra~$\SL(2)$;
this is another way of saying that $\SL_q(2)$ is a deformation of~$\SL(2)$ as a Hopf algebra.

The Hopf algebra $\SL_q(2)$ is an example of a \emph{quantum group}\index{quantum group}. 
The Hopf algebras $\OO(GL_N(\CC))$ and $\OO(SL_N(\CC))$ can be quantized in a similar fashion.

\begin{exercise}
(a) Compute the following expressions in~$\SL_q(2) \otimes \SL_q(2)$ involving 
the coproduct~$\Delta$ defined by\,\eqref{eq-coprod-SL}:
\begin{eqnarray*}
\Delta(b)\Delta(a) - q\Delta(a)\Delta(b), && \Delta(c)\Delta(a) - q\Delta(a)\Delta(c), \\
\Delta(d)\Delta(b) - q\Delta(b)\Delta(d), && \Delta(d)\Delta(c) - q\Delta(c)\Delta(d), \\
\Delta(b)\Delta(c) - \Delta(c)\Delta(b), && \Delta(a)\Delta(d) - q^{-1} \Delta(b)\Delta(c) - 1\otimes 1, \\
\Delta(a)\Delta(d) - \Delta(d)\Delta(a) & - & (q-q^{-1}) \, \Delta(b)\Delta(c).
\end{eqnarray*}
Deduce that $\Delta: \SL_q(2) \to \SL_q(2) \otimes \SL_q(2)$ is a morphism of algebras.

(b) Check that $\SL_q(2)$ satisfies all axioms of a Hopf algebra.
\end{exercise}

\begin{exercise}\label{antipode-order}
\emph{(The square of the antipode)}

(a) Use \eqref{eq-anti-SLq} to compute the square~$S^2$ of the antipode of~$\SL_q(2)$ on the generators $a$, $b$, $c$,~$d$.

(b) Show that~$S^2$ has infinite order if $q$ is not a root of unity\index{root of unity}.

(c) If $q = \exp(\pi\sqrt{-1}/N)$ for some integer $N > 1$, prove that $S^2$ is a Hopf algebra 
automorphism of~$\SL_q(2)$ of order~$N$. 
\end{exercise}

\begin{exercise}
For $\eps = \pm 1$ define $\SL_{(\eps)}(2)$ to be the algebra generated by $X,Y,Z,T$ and the relations
$XY = YX$, $XZ = ZX$, $XT = TX$, $YZ  = \eps ZY$, $YT = \eps TY$, $ZT = \eps TZ$ and
$X^2 - \eps Y^2 -\eps Z^2 + \eps T^2 = 1$.

(a) Let $\eps = 1$. Show that 
there is an algebra isomorphism $\varphi: \SL_{(\eps)}(2) \to \SL(2)$ such that $\varphi(X) = (a+d)/2$,
$\varphi(Y) = (a-d)/2$, $\varphi(Z) = (b+c)/2$, $\varphi(T) = (b-c)/2$.
Deduce $\Alg(\SL_{(\eps)}(2), \CC) \cong SL_2(\CC)$.

(b) Let $\eps = -1$. Show that $\Alg(\SL_{(\eps)}(2), \CC)$ is the union of three quadrics
lying in three distinct planes (for further details, see\,\cite[Sect.\,4.2]{GKM}).
\end{exercise}

\subsection{A quotient of~$\SL_q(2)$}\label{ssec-noco-nococo}
\index{Hopf algebra!non-commutative non-cocommutative}

Let $q$ be again a non-zero scalar. 
Consider the algebra $\CC_q[X,X^{-1},Y]$\index{$\CC_q[X,X^{-1},Y]$} generated by three generators $X, X^{-1},Y$ 
subject to the relations 
\begin{equation*}
X X^{-1} = X^{-1} X = 1, \qquad YX = qXY.
\end{equation*}
This algebra is non-commutative when $q\neq 1$.
Proceeding as in Exercise\,\ref{exo-qplane-basis}, 
the reader may check that the set $\{X^i Y^j\}$ where $i$~runs over~$\ZZZ$ and $j$ over~$\NN$
is a basis of~$\CC_q[X,X^{-1},Y]$. 
The algebra~$\CC_q[X,X^{-1},Y]$ contains the quantum plane\index{quantum plane}~$\CC_q[X,Y]$ 
of~Sect.\,\ref{sssec-qplane} as a subalgebra.

The algebra~$\CC_q[X,X^{-1},Y]$ has the structure of a Hopf algebra\index{Hopf algebra} with coproduct~$\Delta$,
counit~$\eps$ and antipode~$S$ given on the generators $X,Y$ by
\begin{equation}\label{eq-copr-qLaurent}
\Delta(X) = X \otimes X , \qquad \Delta(Y) = X \otimes Y + Y \otimes X^{-1},
\end{equation}
\begin{equation}\label{eq-counit-qLaurent}
\eps(X) = 1, \quad \eps(Y) = 0, \quad S(X) = X^{-1} , \quad S(Y) = -qY.
\end{equation}
The formula for $\Delta(Y)$ shows that $\CC_q[X,X^{-1},Y]$ is a non-cocom\-mu\-ta\-tive Hopf algebra.

Moreover, $\CC_q[X,X^{-1},Y]$ is a quotient of the Hopf algebra~$\SL_q(2)$\index{quantum $SL(2)$} 
introduced in Sect.\,\ref{ssec-SL2}; we have the following precise statement, whose proof we leave to the reader.

\begin{lemma}\label{lem-noco-nococo}
There is a surjective morphism of Hopf algebras
\[
\pi : \SL_q(2) \to \CC_q[X,X^{-1},Y]
\]
such that
$\pi(a) = X$, $\pi(b) = Y$, $\pi(c) = 0$, and $\pi(d) = X^{-1}$.
\end{lemma}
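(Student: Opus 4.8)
The plan is to construct $\pi$ as an algebra morphism first, check it is surjective, then verify it intertwines the Hopf-algebra structure maps. First I would define $\pi$ on the free algebra $\CC\langle a,b,c,d\rangle$ by sending $a\mapsto X$, $b\mapsto Y$, $c\mapsto 0$, $d\mapsto X^{-1}$, and then check that every one of the seven defining relations of $\SL_q(2)$ listed in Section~\ref{ssec-SL2} is killed. This is a routine but necessary computation: for instance $ba-qab\mapsto YX-qXY=0$ by the defining relation of $\CC_q[X,X^{-1},Y]$; $ca-qac$, $dc-qcd$, $bc-cb\mapsto 0$ trivially since $c\mapsto 0$; $db-qbd\mapsto X^{-1}Y-qYX^{-1}$, which vanishes because $YX=qXY$ implies $X^{-1}Y=q^{-1}YX^{-1}$ wait — one must check the sign of $q$ carefully here, multiplying $YX=qXY$ on both sides by $X^{-1}$ gives $X^{-1}YX=qX^{-1}XY=qY$, hence $X^{-1}Y=qYX^{-1}$, so $db-qbd\mapsto X^{-1}Y-qYX^{-1}=0$; similarly $ad-da\mapsto X\!X^{-1}-X^{-1}X=0=(q^{-1}-q)bc\mapsto 0$; and $ad-q^{-1}bc-1\mapsto X\!X^{-1}-0-1=0$. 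So $\pi$ descends to an algebra morphism $\SL_q(2)\to\CC_q[X,X^{-1},Y]$. Surjectivity is immediate since $X=\pi(a)$, $Y=\pi(b)$ and $X^{-1}=\pi(d)$ generate the target.

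Next I would check that $\pi$ is a morphism of bialgebras, i.e. $\Delta\circ\pi=(\pi\otimes\pi)\circ\Delta$ and $\eps\circ\pi=\eps$. Since all maps in sight are algebra morphisms, it suffices to verify these on the generators $a,b,c,d$. For the counit: $\eps(\pi(a))=\eps(X)=1=\eps(a)$, $\eps(\pi(d))=\eps(X^{-1})=1=\eps(d)$, $\eps(\pi(b))=\eps(Y)=0=\eps(b)$, $\eps(\pi(c))=\eps(0)=0=\eps(c)$. For the coproduct, using \eqref{eq-coprod-SL} on the source and \eqref{eq-copr-qLaurent} on the target: on $a$, $(\pi\otimes\pi)\Delta(a)=(\pi\otimes\pi)(a\otimes a+b\otimes c)=X\otimes X+Y\otimes 0=X\otimes X=\Delta(X)=\Delta(\pi(a))$; on $b$, $(\pi\otimes\pi)(a\otimes b+b\otimes d)=X\otimes Y+Y\otimes X^{-1}=\Delta(Y)=\Delta(\pi(b))$; on $c$, $(\pi\otimes\pi)(c\otimes a+d\otimes c)=0\otimes X+X^{-1}\otimes 0=0=\Delta(\pi(c))$; on $d$, $(\pi\otimes\pi)(c\otimes b+d\otimes d)=0\otimes Y+X^{-1}\otimes X^{-1}=\Delta(X^{-1})=\Delta(\pi(d))$, where the last equality follows since $X$ is group-like and hence so is $X^{-1}=S(X)$ by Proposition~\ref{prop-GrH}. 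Thus $\pi$ is a bialgebra morphism.

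Finally, compatibility with the antipode, $S\circ\pi=\pi\circ S$, again needs to be checked only on $a,b,c,d$, using \eqref{eq-anti-SLq} and \eqref{eq-counit-qLaurent}: $S(\pi(a))=S(X)=X^{-1}=\pi(d)=\pi(S(a))$; $S(\pi(b))=S(Y)=-qY=\pi(-qb)=\pi(S(b))$; $S(\pi(c))=S(0)=0=\pi(-q^{-1}c)=\pi(S(c))$; $S(\pi(d))=S(X^{-1})=X=\pi(a)=\pi(S(d))$, where $S(X^{-1})=X$ because $S(X)=X^{-1}$ and on group-like elements the antipode is inversion (Proposition~\ref{prop-GrH}), so $S(X^{-1})=(X^{-1})^{-1}=X$. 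Hence $\pi$ is a morphism of Hopf algebras, and we are done. I do not anticipate any genuine obstacle here; the only point requiring mild care is getting the powers of $q$ right when passing relations like $YX=qXY$ through to statements about $X^{-1}$, and making sure the target's own coproduct/antipode formulas \eqref{eq-copr-qLaurent}--\eqref{eq-counit-qLaurent} are self-consistent (which is implicitly being used as the statement that $\CC_q[X,X^{-1},Y]$ is a Hopf algebra).
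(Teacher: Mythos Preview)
Your proof is correct and complete; the paper itself leaves this lemma as an exercise (``whose proof we leave to the reader''), so there is no proof to compare against, and your direct verification is exactly what was intended. One small remark: once you have established that $\pi$ is a bialgebra morphism, compatibility with the antipode follows automatically (a bialgebra map between Hopf algebras always intertwines the antipodes), so your final paragraph, while correct, is not strictly necessary.
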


Since the morphism~$\pi$ kills the generator~$c$ of~$\SL_q(2)$, we can see~$\CC_q[X,X^{-1},Y]$ 
as a quantization\index{quantization} of the coordinate algebra 
of the subgroup~$B$ of upper triangular matrices in~$SL_2(\CC)$.

\subsection{The quantum enveloping algebra of~$\gs\gl(2)$}\label{ssec-qea}

We now describe another important quantum group\index{quantum group}, 
which is dual to the quantum group~$\SL_q(2)$
in a sense which will be made precise in Lemma\,\ref{lem-dualSL2} below.
 
This new algebra, denoted~$U_q\, \gs\gl(2)$\index{$U_q\, \gs\gl(2)$}, also depends on a non-zero complex parameter~$q$;
we furthermore assume $q \neq \pm 1$, so that $q - q^{-1} \neq 0$.

We define $U_q\, \gs\gl(2)$ to be the algebra generated by four elements $E,F, K, K^{-1}$
subject to the relations
\begin{equation*}
K K^{-1} = K^{-1} K = 1,
\end{equation*}
\begin{equation*}
KE = q^2 EK ,
\qquad
KF = q^{-2} FK ,
\end{equation*}
\begin{equation*}
EF - FE = \frac{K - K^{-1}}{q - q^{-1}} .
\end{equation*}

The algebra $U_q\, \gs\gl(2)$ is called the \emph{quantum enveloping algebra}\footnote{The
concept of enveloping algebra of a Lie algebra is a classical concept of the theory of Lie algebras;
see for instance \cite{Di, Jc, Ka-QG, Se}. The relationship between the 
quantum enveloping algebra~$U_q\, \gs\gl(2)$ and the enveloping algebra
of the Lie algebra~$\gs\gl(2)$ is explained in \cite[VI.2]{Ka-QG}.}
\index{quantum enveloping algebra} of
the Lie algebra~$\gs\gl(2)$. 
The set $\{E^i F^j K^{\ell}\}_{i,j \in \NN; \, \ell\in \ZZZ}$ is a basis of~$U_q\, \gs\gl(2)$ 
considered as a complex vector space (for a proof, see\,\cite[Prop.\,VI.1.4]{Ka-QG}).

The algebra $U_q\, \gs\gl(2)$ is a Hopf algebra with coproduct\index{coproduct}~$\Delta$, counit\index{counit}~$\eps$,
and antipode\index{antipode}~$S$ given on the generators by
\begin{equation*}
\Delta(K^{\pm 1}) = K^{\pm 1} \otimes K^{\pm 1}, \quad \eps(K^{\pm 1}) = 1, \quad S(K^{\pm 1}) = K^{\mp 1},
\end{equation*}
\begin{equation*}
\Delta(E) = 1 \otimes E + E \otimes K, \quad \eps(E) = 0, \quad S(E) = -EK^{-1},
\end{equation*}
\begin{equation*}
\Delta(F) = K^{-1} \otimes F + F \otimes 1, \quad \eps(F) = 0, \quad S(F) = -q^{-1}FK.
\end{equation*}

The algebra $U_q\, \gs\gl(2)$ first appeared in a paper by Kulish and Reshetikhin; 
its Hopf algebra structure is due to Sklyanin (\emph{cf.}\,\cite{KR, Sk}).

Consider the morphism of algebras $\rho : U_q\, \gs\gl(2) \to M_2(\CC)$ given by
\begin{equation*}
\rho(K^{\pm 1}) = 
\begin{pmatrix}
q^{\pm 1} & 0 \\
0 & q^{\mp 1}
\end{pmatrix},
\quad
\rho(E) =
\begin{pmatrix}
0 & 1\\
0 & 0
\end{pmatrix},
\quad
\rho(F) =
\begin{pmatrix}
0 & 0\\
1 & 0
\end{pmatrix}.
\end{equation*}
It is a two-dimensional representation of~$U_q\, \gs\gl(2)$.
For any $u \in U_q\, \gs\gl(2)$, the matrix $\rho(u)$ is of the form
\begin{equation*}
\rho(u) =
\begin{pmatrix}
A(u) & B(u) \\
C(u) & D(u)
\end{pmatrix}.
\end{equation*}
This equality defines four linear forms $A,B,C,D$ on $U_q\, \gs\gl(2)$, hence four elements
$A,B,C,D$ on the dual algebra~$U_q\, \gs\gl(2)\, \check{}\,$ whose product is given by\,\eqref{dual-prod}.

\begin{lemma}\label{lem-dualSL2}\index{$\SL_q(2)$}
There is a unique morphism of algebras $\psi: \SL_q(2) \to U_q\, \gs\gl(2)\, \check{}\,$ such that
\begin{equation*}
\psi(a) = A, \quad \psi(b) = B, \quad \psi(c) = C, \quad \psi(d) = D.
\end{equation*}
\end{lemma}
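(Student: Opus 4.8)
The plan is to use the universal property of $\SL_q(2)$ as an algebra given by generators and relations: a morphism of algebras out of $\SL_q(2)$ is the same thing as a choice of four elements $A,B,C,D$ in the target satisfying the seven defining relations
\[
BA = qAB, \quad CA = qAC, \quad DB = qBD, \quad DC = qCD,
\]
\[
BC = CB, \quad AD - DA = (q^{-1}-q)BC, \quad AD - q^{-1}BC = 1,
\]
the product in the target being the convolution product on $U_q\,\gs\gl(2)\,\check{}\,$ defined by \eqref{dual-prod}. So the whole content is to verify these seven identities for the four linear forms $A,B,C,D$ extracted from the representation $\rho: U_q\,\gs\gl(2) \to M_2(\CC)$.

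The key observation that makes this tractable is that $\rho$ being a morphism of algebras means exactly $\rho(uv) = \rho(u)\rho(v)$, i.e. for the matrix entries
\[
\begin{pmatrix} A(uv) & B(uv) \\ C(uv) & D(uv) \end{pmatrix}
= \begin{pmatrix} A(u) & B(u) \\ C(u) & D(u) \end{pmatrix}
  \begin{pmatrix} A(v) & B(v) \\ C(v) & D(v) \end{pmatrix}.
\]
On the other hand, the convolution product of two of these forms, say $(A * B)(u) = \sum_{(u)} A(u_{(1)}) B(u_{(2)}) = (A \otimes B)(\Delta(u))$, uses the coproduct of $U_q\,\gs\gl(2)$, not its product. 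So the two multiplications — matrix multiplication of $\rho$-values and convolution on the dual — are genuinely different, and I should not expect the identities to follow from $\rho$ being multiplicative alone. The correct route is: first establish that $\psi$ would be an algebra map if and only if $\Delta^{\check{}\,}$ applied to each relator vanishes, equivalently evaluate each relator $\in \SL_q(2)$ against $\Delta(u)$ for all $u$, equivalently — since $\{E^iF^jK^\ell\}$ spans — check it against the generators $E, F, K, K^{-1}$ and use that both sides are "algebra-like" under convolution. Concretely: compute $A,B,C,D$ explicitly on the generators $E,F,K^{\pm1}$ from the formula for $\rho$ (e.g. $A(E)=0$, $B(E)=1$, $A(K)=q$, $A(K^{-1})=q^{-1}$, etc.), then compute the convolution products appearing in each relation on the generators using the coproduct formulas for $\Delta(E),\Delta(F),\Delta(K^{\pm1})$, and check the seven identities hold on $E,F,K,K^{-1}$; finally invoke that an identity between convolution-polynomial expressions in $A,B,C,D$ which holds on a generating set of $U_q\,\gs\gl(2)$ holds identically, because convolution is compatible with the algebra structure of $U_q\,\gs\gl(2)$ (this is the statement that $\Delta$ is an algebra map, so $(f*g)(uv)$ is controlled by the values of $f,g$ on products $u_{(1)}v_{(1)}$, $u_{(2)}v_{(2)}$).

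Let me say a bit more about that last reduction since it is the subtle point. Given a relation $R(A,B,C,D) = 0$ we want in $U_q\,\gs\gl(2)\,\check{}\,$, where $R$ is some noncommutative polynomial in the convolution product, define the linear form $\varphi = R(A,B,C,D)$; one shows $\varphi$ is a \emph{derivation-type} or rather an \emph{algebra-compatible} functional in the sense that $\varphi(uv)$ is determined by the vanishing of $\varphi$ on a generating set, using the multiplicativity of $\Delta$. More cleanly: the set $Z = \{ u \in U_q\,\gs\gl(2) : \varphi(u) = 0 \}$ — this is not obviously a subalgebra, so instead one argues that the map $u \mapsto (\psi_{\text{gen}}\text{-evaluation})$ factoring through $\SL_q(2)$ works because a morphism from the free algebra $\CC\langle a,b,c,d\rangle$ to $U_q\,\gs\gl(2)\,\check{}\,$ always exists (free algebra, no relations), and it descends to $\SL_q(2)$ iff the seven relators map to $0$; and each relator maps to $0$ iff it pairs to zero against every element of $U_q\,\gs\gl(2)$, iff (by multiplicativity of $\Delta$ and induction on word length in $E,F,K^{\pm1}$) it pairs to zero against all generators. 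So the honest work is the finite computation on the four generators.

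\textbf{Main obstacle.} The conceptual content is light; the real work is the explicit bookkeeping of seven convolution identities on four generators, keeping the powers of $q$ straight — in particular the relations $KE = q^2 EK$, $KF = q^{-2}FK$ and the coproducts $\Delta(E) = 1\otimes E + E\otimes K$, $\Delta(F) = K^{-1}\otimes F + F \otimes 1$ produce the asymmetric $q$ versus $q^{-1}$ factors that must match the defining relations of $\SL_q(2)$ (e.g. $BA = qAB$ but $DB = qBD$). The one genuinely non-routine step is justifying the reduction "holds on generators $\Rightarrow$ holds identically": it rests on $\Delta$ being an algebra homomorphism, so that for any word $w = g_1 \cdots g_n$ in the generators, $\Delta(w) = \Delta(g_1)\cdots\Delta(g_n)$, and a relator expression evaluated on $\Delta(w)$ expands into a sum of products of the relator's values on shorter words — an induction. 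I would isolate this as the crux and then treat the generator-level verification as a (routine but careful) computation, possibly relegating it to the reader or to a cited source such as \cite[VII]{Ka-QG}.
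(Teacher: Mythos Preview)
The paper does not prove this lemma; it simply refers to \cite[Sect.~VII.4]{Ka-QG}. Your setup is correct and is the standard one: by the presentation of $\SL_q(2)$, existence and uniqueness of~$\psi$ reduce to checking that the four linear forms $A,B,C,D$ satisfy the seven defining relations under the convolution product on $U_q\,\gs\gl(2)\,\check{}\,$.

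The gap is in your reduction ``holds on the generators $E,F,K,K^{-1}$ $\Rightarrow$ holds on all of $U_q$''. Each relator, e.g.\ $\varphi = B*A - qA*B$, is the composite of the algebra map $(\rho\otimes\rho)\circ\Delta : U_q \to M_2(\CC)\otimes M_2(\CC)$ with a fixed \emph{linear} functional on $M_2\otimes M_2$. A linear functional that vanishes on a set of algebra generators need not vanish on the subalgebra they generate, and your induction claim that $\varphi(uv)$ ``expands into a sum of products of the relator's values on shorter words'' is false: expanding $\varphi(uv) = (B\otimes A - qA\otimes B)\bigl(\Delta(u)\Delta(v)\bigr)$ and using multiplicativity of~$\rho$ yields sums of products of \emph{various} $A,B,C,D$-values on pieces of $u$ and~$v$, not values of~$\varphi$ itself. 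You correctly flagged that $\{u:\varphi(u)=0\}$ is not obviously a subalgebra, but your proposed way around it does not work. What does work is to recognise that the convolution products of $A,B,C,D$ are precisely the sixteen matrix coefficients of the four-dimensional $U_q$-module $\rho\otimes\rho$; one then either computes this representation explicitly and checks the linear relations on a linear spanning set of its image in~$M_4(\CC)$, or (more conceptually, as in \cite{Ka-QG}) uses the $R$-matrix intertwiner of $\rho\otimes\rho$, which yields the six homogeneous relations at once via $R\,T_1T_2 = T_2T_1\,R$, with the quantum-determinant relation coming from the trivial subrepresentation.
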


For a proof we refer to\,\cite[Sect.\,VII.4]{Ka-QG}. Takeuchi\,\cite{Ta1} showed that $\psi$ is injective;
thus $\SL_q(2)$ embeds into the dual of the quantum enveloping algebra~$U_q\, \gs\gl(2)$.
Actually, the image of the morphism $\psi$ lies inside the restricted dual Hopf algebra~$U_q\, \gs\gl(2)^{\circ}$,
as defined in Remark\,\ref{rem-restricted}.

\begin{exercise}
Prove that the map $\rho : U_q\, \gs\gl(2) \to M_2(\CC)$ defined above is a morphism of algebras.
Give a proof of Lemma\,\ref{lem-dualSL2}.
\end{exercise}

\begin{exercise}
Check that the group-like elements of~$U_q\, \gs\gl(2)$ consist of the powers $K^k$ of~$K$
($k\in \ZZZ$).
\end{exercise}

\begin{exercise}
Show that the following element of~$U_q\, \gs\gl(2)$ belongs to its center:
\begin{equation*}
EF + \frac{q^{-1}K + qK^{-1}}{(q - q^{-1})^2} .
\end{equation*}
\end{exercise}

\begin{remark}\label{rem-Uqg}
Drinfeld\,\cite{Dr1, Dr2} and Jimbo\,\cite{Ji} generalized the construction of $U_q\, \gs\gl(2)$ to any symmetrizable
Kac--Moody Lie algebra~$\gog$. The resulting Hopf algebra~$U_q\, \gog$ is a quantization\index{quantization} of
the universal enveloping algebra of~$\gog$.
\end{remark}

\subsection{A finite-dimensional quotient of $U_q\, \gs\gl(2)$}\label{ssec-ud}

The quantum enveloping algebra $U_q\, \gs\gl(2)$ has an interesting quotient when
$q$ is a root of unity\index{root of unity} of order~$d$ ($d \geq 3$ since $q \neq \pm 1$).
Assume $q$ is such a root of unity.
Set $e = d$ if $d$ is odd, and $e = d/2$ if $d$ is even; we have $e \geq 2$.

Let $I$ be the two-sided ideal of~$U_q\, \gs\gl(2)$ generated by $E^e$, $F^e$ and $K^e - 1$.
Define the quotient algebra\index{$\gu_d$}
\[
\gu_d = U_q\, \gs\gl(2)/I.
\] 
It can be shown that the set $\{E^i F^j K^{\ell}\}_{1 \leq i,j,\ell \leq e-1}$ of elements of~$U_q\, \gs\gl(2)$
maps to a basis of~$\gu_d$ (for a proof, see\,\cite[Prop.\,VI.5.8]{Ka-QG}). 
Therefore, $\gu_d$ is finite-dimensional of dimension equal to~$e^3$.

Moreover, there is a unique Hopf algebra structure on~$\gu_d$ such that 
the natural projection $U_q\, \gs\gl(2) \to \gu_d$ is a morphism of Hopf algebras (see\,\cite[Prop.\,IX.6.1]{Ka-QG}).

\begin{exercise}
Let $q$ be a root of unity of order $d\geq 3$ and $e$ as above.
Show that the elements $E^e$, $F^e$, $K^e$ lie in the center of~$U_q\, \gs\gl(2)$.
\end{exercise}
 
We will come back to~$U_q\, \gs\gl(2)$ and~$\gu_d$ in Sect.\,\ref{sec-deform-Uq}.

%%%
\section{Group actions in non-commutative geometry}\label{sec-group-action}

Our next step is to extend the concept of a group action to the world of non-commutative spaces. 
We need to introduce the concept of a comodule algebra over a Hopf algebra.
As we shall see, such a concept covers various situations.

\subsection{Comodule-algebras}\label{ssec-comod}

Fix a Hopf algebra~$H$ with coproduct $\Delta$ and counit~$\eps$.

\begin{definition}\label{def-comod}
A \emph{(right) $H$-comodule algebra}\index{comodule algebra}\index{algebra!comodule} 
is an (associative unital) algebra~$A$ equipped with a morphism of algebras $\delta = A \to A \otimes H$, 
called the \emph{coaction}\index{coaction}, satisfying the following properties:

(a) {(Coassociativity)}\index{coassociativity}
\begin{equation}\label{coass-comod}
(\delta \otimes \id_H) \circ \delta = (\id_A \otimes \,\Delta) \circ \delta,
\end{equation}

(b) {(Counitarity)}\index{counitarity}
\begin{equation}\label{counit-comod}
(\id_A \otimes \, \eps)\circ \delta = \id_A,
\end{equation}
where we have identified $A\otimes \CC$ with~$A$.
\end{definition}

Any $H$-comodule algebra~$A$ contains a subalgebra, which will turn out to be of importance to us,
namely the subalgebra of~$A$ on which the coaction $\delta$ is trivial:
\begin{equation*}
A^{\co-H} = \left\{ a \in A \; | \; \delta(a) = a \otimes 1 \right\}.
\end{equation*}
The elements of~$A^{\co-H}$\index{$A^{\co-H}$} are called \emph{coinvariant}\index{coinvariant}.

\begin{exercise}
Show that $A^{\co-H}$ is a subalgebra of~$A$ and that the unit~$1_A$ of~$A$ belongs to~$A^{\co-H}$.
\end{exercise}

The following example of a comodule algebra shows that this concept extends group actions to non-commutative algebra.

\begin{example}\label{exa-OGcoation}
Let $G$ be a finite group acting on the right on a finite set~$X$. Then the action, which is a map
$X \times G \to X$ induces a morphism of algebras~$\delta$ between the corresponding function algebras\index{function algebra}
\[
\delta: \OO(X) \to \OO(X \times G) = \OO(X) \otimes \OO(G).
\]
Equipped with~$\delta$, the algebra~$\OO(X)$ becomes an $H$-comodule algebra 
for the Hopf algebra $H= \OO(G)$.

Let $Y= X/G$ be the set of orbits\index{orbit space} of the action of~$G$ on~$X$. Then the projection $X \to Y$
sending each element~$x \in X$ to its orbit~$xG$ induces an injective morphism of algebras $\OO(Y) \to \OO(X)$. 
It can be checked that $\OO(Y)$ coincides with the subalgebra~$\OO(X)^{\co-\OO(G)}$\index{$A^{\co-H}$}
of coinvariant elements of~$\OO(X)$.
\end{example}

\begin{example}\label{exa-H=comodalg}
In Definition\,\ref{def-comod} set $A$ to be equal to the Hopf algebra~$H$ and the coaction~$\delta$
to be equal to the coproduct~$\Delta$ of~$H$.
Then $H$ becomes an $H$-comodule algebra.
We claim that any coinvariant\index{coinvariant} element $x\in H$ is a scalar multiple of the unit~$1$ of~$H$. 
Indeed, applying $\eps \otimes \id$ to both sides of the equality $\Delta(x) = x \otimes 1$ and using\,\eqref{eq-coun},
we obtain $x = \eps(x) \, 1$, which yields the desired conclusion.
\end{example}

We now give more examples of comodule algebras.

\subsection{Group-graded algebras}\label{ssec-super}

Let $G$ be a group. 

\begin{definition}\label{def-graded}
A \emph{$G$-graded algebra}\index{graded algebra}\index{algebra!graded} 
is an algebra~$A$ together with a vector space decomposition
\[
A = \bigoplus_{g\in G} \, A_g,
\]
where each $A_g$ is a linear subspace of~$A$ such that

(a) $A_g A_h \subset A_{gh}$ for all $g,h \in G$, which means that the product $a b$ belongs to~$A_{gh}$
whenever $a \in A_g$ and $b\in A_h$;

(b) the unit\index{unit}~$1_A$ of~$A$ is in~$A_e$, where $e$ is the unit of the group~$G$.
\end{definition}

It follows from the definition that $A_e$ is a subalgebra of~$A$ and that each $A_g$ is an $A_e$-bimodule
under the product of~$A$.

When $G = \ZZZ/2$ is the cyclic group of order~$2$, then a $G$-graded algebra is often called
a \emph{superalgebra}\index{superalgebra}.

We next show that a $G$-graded algebra is the same as a $\CC[G]$-comodule algebra,
where $\CC[G]$ is the convolution algebra\index{group algebra}\index{algebra!group} of the group~$G$
with its Hopf algebra structure defined in Sect.\,\ref{sssec-CG} (see also~\cite[Lemma\,4.8]{BlM}).

\begin{proposition}\label{prop-graded}
(a) Any $G$-graded algebra~$A$ is a $\CC[G]$-comodule algebra. 
Moreover, $A^{\co-\CC[G]} = A_e$\index{$A^{\co-H}$}.

(b) Conversely, any $\CC[G]$-comodule algebra is a $G$-graded algebra. 
\end{proposition}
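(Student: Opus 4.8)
The plan is to produce the coaction explicitly in one direction and to read it off from the coaction in the other, the whole argument resting on the fact that $\{g\}_{g\in G}$ is a basis of $\CC[G]$ (hence $\{g\otimes h\}_{(g,h)\in G\times G}$ a basis of $\CC[G]\otimes\CC[G]$), which lets one pass from an equality of tensors to equalities of homogeneous components. For part (a), given a $G$-graded algebra $A=\bigoplus_{g\in G}A_g$, I would define $\delta:A\to A\otimes\CC[G]$ on a homogeneous element $a\in A_g$ by $\delta(a)=a\otimes g$ and extend it linearly. Axiom (a) of Definition\,\ref{def-graded} gives $\delta(ab)=ab\otimes gh=(a\otimes g)(b\otimes h)=\delta(a)\delta(b)$ for $a\in A_g$ and $b\in A_h$, while axiom (b) gives $\delta(1_A)=1_A\otimes e=1_{A\otimes\CC[G]}$, so $\delta$ is a morphism of algebras. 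Coassociativity\,\eqref{coass-comod} and counitarity\,\eqref{counit-comod} then follow at once from $\Delta(g)=g\otimes g$ and $\eps(g)=1$ checked on each homogeneous summand.

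To identify the coinvariants, write $a=\sum_g a_g$ with $a_g\in A_g$; the equation $\delta(a)=a\otimes 1$ reads $\sum_g a_g\otimes g=\bigl(\sum_g a_g\bigr)\otimes e$, and comparing coefficients of the basis vectors $g$ forces $a_g=0$ for $g\neq e$, whence $A^{\co-\CC[G]}=A_e$. For part (b), given a $\CC[G]$-comodule algebra $(A,\delta)$, I would expand, for each $a\in A$, the element $\delta(a)$ uniquely as $\sum_{g\in G}a_g\otimes g$ with $a_g\in A$ (finitely many nonzero), and set $A_g=\{a\in A\;|\;\delta(a)=a\otimes g\}$. Counitarity\,\eqref{counit-comod} gives $a=(\id_A\otimes\eps)\circ\delta(a)=\sum_g a_g$, and coassociativity\,\eqref{coass-comod}, namely $\sum_g\delta(a_g)\otimes g=\sum_g a_g\otimes g\otimes g$, yields $\delta(a_g)=a_g\otimes g$, i.e.\ $a_g\in A_g$, upon comparing coefficients. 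Linear independence of $\{g\}$ also makes the decomposition $a=\sum_g a_g$ unique, so $A=\bigoplus_{g\in G}A_g$; and applying the algebra morphism $\delta$ to $ab$ with $a\in A_g$, $b\in A_h$ gives $\delta(ab)=ab\otimes gh$, so $A_gA_h\subset A_{gh}$, while $\delta(1_A)=1_A\otimes 1_{\CC[G]}=1_A\otimes e$ gives $1_A\in A_e$.

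I do not expect a serious obstacle: the argument is essentially bookkeeping. The only point requiring care is the systematic use of the basis $\{g\}_{g\in G}$ of $\CC[G]$ to convert comodule identities in $A\otimes\CC[G]$ and $A\otimes\CC[G]\otimes\CC[G]$ into identities for the components $a_g$; this is precisely what makes the dictionary between $G$-gradings and $\CC[G]$-coactions exact, and in particular it is what guarantees that the sum $\bigoplus_{g\in G}A_g$ in part (b) is direct and not merely spanning.
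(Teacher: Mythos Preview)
Your proposal is correct and follows essentially the same approach as the paper. In part~(b) the paper writes the components as images of projection operators $p_g$ (so that coassociativity reads $p_h\circ p_g=\delta_{g,h}\,p_g$) and defines $A_g=p_g(A)$, whereas you define $A_g$ directly as the ``eigenspace'' $\{a\mid\delta(a)=a\otimes g\}$; these are the same subspaces and the underlying idea---extracting components via the basis $\{g\}_{g\in G}$ of~$\CC[G]$---is identical.
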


\begin{proof}
(a) We define a linear map $\delta: A \to A \otimes \CC[G]$ by
\begin{equation*}
\delta(a) = a \otimes g \quad \text{for all} \; a \in A_g.
\end{equation*}
The map $\delta$ is a morphism of algebras in view of Conditions\,(a) and\,(b) of Definition\,\ref{def-graded}.
Let us check the coassociativity and counitarity conditions of Definition\,\ref{def-comod} for~$\delta$.
Firstly, for any $a\in A_g$,
\begin{equation*}
(\delta \otimes \id_H) \circ \delta (a) = (\delta \otimes \id_H)(a \otimes g) = a\otimes g \otimes g .
\end{equation*}
Similarly, 
\begin{equation*}
(\id_A \otimes \, \Delta) \circ \delta (a) = (\id_A \otimes \, \Delta)(a \otimes g) = a\otimes g \otimes g
\end{equation*}
in view of\,\eqref{def-CGcoprod}. 
Therefore, $(\delta \otimes \, \id_H) \circ \delta = (\id_A \otimes \, \Delta) \circ \delta$
holds on each subspace~$A_g$, hence on~$A$.
Secondly, for any $a\in A_g$,
\begin{equation*}
(\id_A \otimes \, \eps)\circ \delta (a) = (\id_A \otimes \, \eps) (a \otimes g) = a \, \eps(g) = a
\end{equation*}
again in view of\,\eqref{def-CGcoprod}. 

The inclusion $A_e \subset A^{\co-\CC[G]}$ follows from the definition of~$\delta$
and from the fact that $e$ is the unit of~$\CC[G]$.
Let us prove the converse inclusion.
For a general element $a = \sum_{g\in G} \, a_g \in A$ with each $a_g \in A_g$, we have
\begin{equation*}
\delta(a) = \sum_{g\in G} \, a_g \otimes g .
\end{equation*}
Since the elements $g\in G$ are linearly independent in~$\CC[G]$, we see that, if $a$ is coinvariant, i.e., 
$\delta(a) = a \otimes e$, then $a_g = 0$ for all $g\neq e$. Thus any coinvariant element belongs to~$A_e$.

(b) Assume now that $A$ is a $\CC[G]$-comodule algebra with coaction~$\delta$.
Using the natural basis $\{g\}_{g\in G}$ of~$\CC[G]$, we can expand $\delta(a) \in A \otimes \CC[G]$ 
for any $a\in A$ uniquely as
\[
\delta(a) = \sum_{g\in G} \, p_g(a) \otimes g
\]
where each $p_g(a)$ belongs to~$A$. 
It is clear that $a\mapsto p_g(a)$ defines a linear endomorphism~$p_g$ of~$A$.

Let us now express the coassociativity of the coaction~$\delta$. On one hand, we have
\[
(\delta \otimes \id_H) \circ \delta (a) = (\delta \otimes \id_H) \left(\sum_{g\in G} \, p_g(a) \otimes g \right)
= \sum_{g\in G} \, \sum_{h\in G} \,p_h(p_g(a)) \otimes h\otimes g.
\]
On the other hand, 
\[
(\id_A \otimes \, \Delta) \circ \delta (a) = (\id_A \otimes \, \Delta)\left(\sum_{g\in G} \, p_g(a) \otimes g \right)
= \sum_{g\in G} \, p_g(a) \otimes g\otimes g.
\]
Identifying both right-hand sides in view of \eqref{coass-comod}, we obtain
\begin{equation}\label{eq-idempotent}
p_h \circ p_g = 
\begin{cases}
\, p_g & \text{if} \; g = h,\\
\; 0 & \text{otherwise.}
\end{cases}
\end{equation}

Next, the counitarity condition~\eqref{counit-comod} implies that
\begin{eqnarray*}
a & = & (\id_A \otimes \, \eps)\circ \delta(a) = (\id_A \otimes \, \eps)\left(\sum_{g\in G} \, p_g(a) \otimes g \right) \\
& = & \sum_{g\in G} \, p_g(a) \, \eps(g) = \sum_{g\in G} \, p_g(a) .
\end{eqnarray*}
In other words, 
\begin{equation}\label{eq-sum}
\sum_{g\in G} \, p_g = \id_A .
\end{equation}

Define the linear subspace $A_g = p_g(A)$ of~$A$ for all $g\in G$. 
The equality\,\eqref{eq-sum} implies $\sum_{g\in G} \, A_g = A$.
Let us check that this sum is a direct sum. Indeed, let us assume that
$\sum_{g\in G} \, p_g(a_g) = 0$ in~$A$ for a family~$(a_g)$ of elements of~$A$
and apply $p_h$ to it for a fixed element $h\in G$.
By\,\eqref{eq-idempotent}, we obtain
\[
0  = p_h \left( \sum_{g\in G} \, p_g(a_g) \right) = \sum_{g\in G} \, p_h(p_g(a_g) ) = p_h(a_h).
\]
Since this holds for any $h\in G$, we see that each summand in the sum $\sum_{g\in G} \, p_g(a_g)$ vanishes.

We claim that $\delta(a) = a \otimes g$ for any $a\in A_g$. Indeed, an element of~$A_g$ is of the form
$a = p_g(a')$ for some~$a' \in A$. Using\,\eqref{eq-idempotent},
we obtain
\[
\delta(a) = \sum_{h\in G} \, p_h(a) \otimes h = \sum_{h\in G} \, p_h(p_g(a')) \otimes h = p_g(a') \otimes g
= a \otimes g.
\]

It remains to check that $a b$ belongs to~$A_{gh}$ for all $a \in A_g$ and $b\in A_h$, and that $1_A$ belongs to~$A_e$.
For the first requirement, we have
$\delta(a) = a\otimes g$ and $\delta(b) = b\otimes h$. Since $\delta$ is a morphism of algebras,
we have
\[
\delta(ab) = \delta(a) \delta(b) = (a\otimes g) (b \otimes h) = ab \otimes gh,
\]
which proves that the product $ab$ belongs to~$A_{gh}$. 

For the second requirement, we have $\delta(1_A) = 1_A \otimes \, e$; 
thus, the unit of the algebra belongs to the component~$A_e$ indexed by the unit~$e$ of the group.
\qed
\end{proof}

Let us give a few examples of group-graded algebras.\index{graded algebra}\index{algebra!graded} 

\begin{example}
By Example\,\ref{exa-H=comodalg} we know that the Hopf algebra~$\CC[G]$ is itself a $\CC[G]$-comodule algebra 
with coaction equal to the coproduct~$\Delta$ of~$\CC[G]$. 
Since $\Delta(g) = g \otimes g$ by\,\eqref{def-CGcoprod}, we deduce from Proposition\,\ref{prop-graded} and its proof that
$\CC[G]$ is a $G$-graded algebra $\CC[G] = \bigoplus_{g\in G}\, A_g$, where each $g$-component~$A_g$
is one-dimensional and consists of all scalar multiples of the element~$g$.
\end{example}

\begin{example}\label{exa-graded-c}
\emph{(Gradings on matrix algebras)}

(a) Consider the algebra\index{matrix algebra}\index{algebra!matrix}~$M_N(\CC)$ of $N \times N$-matrices.
Let $E_{i,j} \in M_N(\CC)$ be the matrix whose entries are all zero, except for the $(i,j)$-entry which is equal to~$1$.
The $N^2$ matrices~$E_{i,j}$ ($1 \leq i,j \leq N$) form a basis of~$M_N(\CC)$.

The algebra~$M_N(\CC)$ can be given many group gradings. 
Indeed, let $G$ be a group and $(g_1, \ldots, g_N)$ be an $N$-tuple of elements of~$G$.
For any $g \in G$, let $A_g$ be the vector space spanned by all matrices~$E_{i,j}$ such that $g_ig_j^{-1} = g$;
we set $A_g = 0$ is there is no couple $(i,j)$ such that $g_ig_j^{-1} = g$.
Then the decomposition $M_N(\CC) =  \bigoplus_{g\in G}\, A_g$ yields the structure of a $G$-graded algebra on~$M_N(\CC)$
(check this claim!).

(b) As a special case of the previous gradings, take $G = \ZZZ/N$ to be the cyclic group 
generated by an element~$t$ of order~$N$ and 
\[
(g_1, \ldots, g_N) = (e, t, t^2, \dots, t^{N-1}).
\] 
Then $M_N(\CC)$ has a grading $M_N(\CC) =  \bigoplus_{k=0}^{N-1}\, A_{t^k}$ 
for which $A_{t^k}$ consists of all matrices $(a_{i,j})_{1 \leq i,j \leq N}$
such that $a_{i,j} = 0$ if $i-j \not\equiv k \pmod{N}$. In particular, $A_e$ is the subalgebra of diagonal matrices.
Each $A_{t^k}$ is $N$-dimensional.
\end{example}

\begin{example}\label{exa-graded-d}
Let $\HH$ be the four-dimensional algebra of \emph{complex quaternions}.\index{quaternion algebra}
Recall that it has a basis $\{1, i, j, k\}$ such that the
multiplication of~$\HH$ is given by the following rules : $1$ is the unit and
\begin{equation*}
i^2 = j^2 = k^2 = -1, \quad 
ij = -ji = k, \quad jk = -kj = i, \quad ki = - ik = j.
\end{equation*}
The algebra~$\HH$ is $G$-graded, where $G$ is the group $(\ZZZ/2)^2$ of order~$4$: we have
\begin{equation*}
A_{(0,0)} = \CC \, 1, \quad A_{(1,0)} = \CC \, i, \quad A_{(0,1)} = \CC \, j, \quad A_{(1,1)} = \CC \, k.
\end{equation*}
There is an isomorphism of algebras $\psi: \HH \to M_2(\CC)$ given by
\begin{eqnarray*}
\psi(1) =
\begin{pmatrix}
1 & 0 \\
0 & 1 
\end{pmatrix},
&&
\psi(i) =
\begin{pmatrix}
0 & \sqrt{-1} \\
\sqrt{-1} & 0 
\end{pmatrix}, 
\\
\psi(j) =
\begin{pmatrix}
0 & -1 \\
1 & 0 
\end{pmatrix},
&&
\psi(k) =
\begin{pmatrix}
\sqrt{-1} & 0 \\
0 & - \sqrt{-1} 
\end{pmatrix}.
\end{eqnarray*}
This isomorphism induces a $(\ZZZ/2)^2$-grading on~$M_2(\CC)$. 
Such a grading is not of the form presented in Example\,\ref{exa-graded-c}\,(b) above.
\end{example}

\subsection{Algebras with group actions}\label{ssec-Galg}

Let $G$ be a group. 

\begin{definition}\label{def-Galg}
A \emph{$G$-algebra}\index{$G$-algebra}\index{algebra with group action} is an algebra~$A$ 
together with a group homomorphism $\rho: G \to \Aut(A)$
such that each $\rho(g)$ is an algebra automorphism of~$A$.
\end{definition}

The subspace $A^G$ consisting of all elements $a\in A$ such that $\rho(g)(a) = a$ for all $g\in A$
forms a subalgebra of~$G$. The elements of~$A^G$ are called \emph{$G$-invariants}\index{invariant}.

Any algebra has the structure of a $G$-algebra with $G$ taken to be (a subgroup of) the group
of algebra automorphisms of~$A$. Let us give a few more examples of $G$-algebras.

\begin{example}
If $K$ is a finite \emph{Galois extension}\index{Galois extension} of a number field~$k$ with Galois group~$G$, 
then $G$ acts by automorphisms on~$K$ and we have $K^G = k$.
\end{example}

\begin{example}
The general linear group $GL_N(\CC)$ acts by conjugation on the matrix algebra~$M_N(\CC)$.
The $GL_N(\CC)$-invariants are the scalar multiples of the identity matrix.
\end{example}

Assume now that the group~$G$ is finite. Consider the Hopf algebra~$\OO(G)$ 
(introduced in Sect.\,\ref{sssec-OG}) and its basis~$\{\delta_g\}_{g\in G}$ of $\delta$-functions.

\begin{proposition}\label{prop-G-alg}
(a) Any $G$-algebra~$A$ is an $\OO(G)$-comodule algebra\index{comodule algebra}\index{algebra!comodule} 
with coaction $\delta: A \to A \otimes \OO(G)$ given for all $a \in A$ by
\begin{equation*}
\delta(a) = \sum_{g\in G} \, \rho(g)(a) \otimes \delta_g.
\end{equation*}
Moreover, the subalgebra~$A^{\co-\OO(G)}$\index{$A^{\co-H}$} of coinvariant\index{coinvariant} elements coincides
with the subalgebra~$A^G$ of $G$-invariant elements of~$A$:
\[
A^{\co-\OO(G)} = A^G .
\]

(b) Conversely, any $\OO(G)$-comodule algebra is a $G$-algebra. 
\end{proposition}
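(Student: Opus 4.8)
The plan is to set up the same dictionary as in Proposition~\ref{prop-graded}, this time transporting everything through the basis $\{\delta_g\}_{g\in G}$ of $\OO(G)$ together with the explicit structure maps $\Delta(\delta_g)=\sum_{h\in G}\delta_h\otimes\delta_{h^{-1}g}$ and $\eps(\delta_g)=\delta_{g,e}$ recorded in Section~\ref{sssec-OG}.

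For part~(a), I would first verify that $\delta(a)=\sum_{g\in G}\rho(g)(a)\otimes\delta_g$ is a morphism of algebras: since each $\rho(g)$ is an algebra automorphism, $\delta(ab)=\sum_g\rho(g)(a)\rho(g)(b)\otimes\delta_g$, and this agrees with $\delta(a)\delta(b)=\sum_{g,h}\rho(g)(a)\rho(h)(b)\otimes\delta_g\delta_h$ because $\delta_g\delta_h=\delta_{g,h}\delta_g$ collapses the double sum; moreover $\delta(1_A)=1_A\otimes\sum_g\delta_g=1_A\otimes 1$. Coassociativity is a bookkeeping comparison: expanding $(\delta\otimes\id_H)\circ\delta(a)=\sum_{g,h}\rho(h)\big(\rho(g)(a)\big)\otimes\delta_h\otimes\delta_g$ and using $\rho(h)\rho(g)=\rho(hg)$, one matches this coefficient-wise against $(\id_A\otimes\Delta)\circ\delta(a)=\sum_{g,h}\rho(g)(a)\otimes\delta_h\otimes\delta_{h^{-1}g}$ after the substitution $g\mapsto hk$. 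Counitarity is immediate from $\eps(\delta_g)=\delta_{g,e}$ and $\rho(e)=\id_A$. Finally, since $\{\delta_g\}$ is a basis, the equality $\delta(a)=a\otimes 1=\sum_g a\otimes\delta_g$ holds if and only if $\rho(g)(a)=a$ for every $g$, which gives $A^{\co-\OO(G)}=A^G$.

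For part~(b), starting from a coaction $\delta\colon A\to A\otimes\OO(G)$, I would expand $\delta(a)=\sum_{g\in G}\rho_g(a)\otimes\delta_g$ in the basis, so each $\rho_g$ is a priori just a linear endomorphism of~$A$. Reading the coassociativity axiom coefficient-wise exactly as in part~(a) yields $\rho_h\circ\rho_g=\rho_{hg}$, and counitarity yields $\rho_e=\id_A$; hence each $\rho_g$ is invertible with inverse $\rho_{g^{-1}}$. That $\delta$ is a morphism of algebras then forces $\rho_g(ab)=\rho_g(a)\rho_g(b)$ and $\rho_g(1_A)=1_A$ (compare $\delta(ab)$ with $\delta(a)\delta(b)$ and $\delta(1_A)$ with $1_A\otimes 1$ in the basis), so each $\rho_g$ is an algebra automorphism and $g\mapsto\rho_g$ is a group homomorphism $G\to\Aut(A)$, equipping $A$ with the structure of a $G$-algebra. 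One checks along the way that the coaction attached to this $G$-structure by part~(a) recovers the original $\delta$, so the two constructions are mutually inverse.

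There is no real obstacle here: the whole argument is a direct verification of Definition~\ref{def-comod}, entirely parallel to the proof of Proposition~\ref{prop-graded}. The only point requiring care is the reindexing in the coassociativity computation — matching $\sum_{g,h}\rho(g)(a)\otimes\delta_h\otimes\delta_{h^{-1}g}$ with $\sum_{g,h}\rho(hg)(a)\otimes\delta_h\otimes\delta_g$ means substituting $g\mapsto hk$ and using the group law in the correct order; once this is pinned down, both parts fall out immediately.
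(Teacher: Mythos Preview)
Your proposal is correct and follows precisely the approach the paper intends: the paper does not spell out a proof of this proposition at all, but simply leaves it to the reader with the instruction to imitate the proof of Proposition~\ref{prop-graded}, which is exactly what you have done. Your verification of coassociativity via the reindexing $g\mapsto hk$ is the only mildly nontrivial step, and you have handled it correctly.
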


The proof is left to the reader, who is invited to take inspiration from the proof of Proposition\,\ref{prop-graded}.

\subsection{The quantum plane and its $\SL_q(2)$-coaction}\label{ssec-qplane}

The special linear group $SL_2(\CC)$ 
acts on the two-dimensional vector space~$\CC^2$ by matrix multiplication. 
As a special case of Example\,\ref{exa-OGcoation}, the coordinate algebra~$\CC[X,Y]$
of~$\CC^2$ becomes a $\SL(2)$-comodule algebra\index{$\SL(2)$}. 
Recall from Sect.\,\ref{ssec-SL2} that 
\[
\SL(2) = \CC[a,b,c,d] / (ad-bc-1)
\]
is the coordinate algebra\index{coordinate algebra} of~$SL_2(\CC)$.\index{special linear group}
It is easy to check that the corresponding coaction\index{coaction}
$\delta: \CC[X,Y] \to \CC[X,Y] \otimes SL_2(\CC)$
is given by
\begin{equation}\label{eq-qplane}
\delta(X,Y) = (X,Y) \otimes
\begin{pmatrix}
a & b \\
c & d
\end{pmatrix},
\end{equation}
which is short for
\begin{equation*}
\delta(X) = X \otimes a + Y \otimes c
\quad\text{and} \quad
\delta(Y) = X \otimes b + Y \otimes d .
\end{equation*}

In Sect.\,\ref{ssec-SL2} we quantized $\SL(2)$ using a complex parameter $q \neq 0$.
We now proceed to quantize\index{$q$-deformation}\index{quantization}\index{deformation} the previous coaction.
To this end we replace $\CC[X,Y]$ by the quantum plane\index{quantum plane} 
$\CC_q[X,Y] = \CC \, \langle X,Y \rangle / (YX - qXY)$\index{$\CC_q[X,Y]$} introduced in Sect.\,\ref{sssec-qplane}.

\begin{theorem}\label{th-qplane}\index{$\SL_q(2)$}
The map $\delta$ given by Formula\,\eqref{eq-qplane} equips the quantum plane $\CC_q[X,Y]$
with the structure of a $\SL_q(2)$-comodule algebra\index{comodule algebra}\index{algebra!comodule}. 
Moreover, the subalgebra of coinvariants\index{coinvariant} of~$\CC_q[X,Y]$ is~$\CC 1$.
\end{theorem}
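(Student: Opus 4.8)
The plan is to establish the statement in two stages: that the given $\delta$ defines an $\SL_q(2)$-comodule algebra structure, and then that its coinvariants are $\CC1$. For the first stage, the one point with content is that $\delta$ is a well-defined morphism of algebras $\CC_q[X,Y]\to\CC_q[X,Y]\otimes\SL_q(2)$. Since $\CC_q[X,Y]$ is the free algebra on $X,Y$ modulo $YX=qXY$, it suffices to define $\delta$ on the free algebra by $X\mapsto X\otimes a+Y\otimes c$ and $Y\mapsto X\otimes b+Y\otimes d$ and then verify that $\delta(Y)\delta(X)-q\,\delta(X)\delta(Y)=0$ in $\CC_q[X,Y]\otimes\SL_q(2)$. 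Expanding the two products and collecting the coefficients of the basis vectors $X^2$, $Y^2$, $XY$ of $\CC_q[X,Y]$ (rewriting the $YX$-terms via $YX=qXY$), the $X^2$- and $Y^2$-coefficients vanish because $ba=qab$ and $dc=qcd$, while the $XY$-coefficient collapses to $0$ using $bc=cb$ together with $ad-da=(q^{-1}-q)bc$. Once $\delta$ is a morphism of algebras, the coassociativity identity $(\delta\otimes\id)\circ\delta=(\id\otimes\Delta)\circ\delta$ and the counitarity identity $(\id\otimes\eps)\circ\delta=\id$ reduce to a check on the generators $X,Y$, both sides being algebra morphisms, and there they are immediate from $\Delta(a)=a\otimes a+b\otimes c$ (etc.) and $\eps(a)=\eps(d)=1$, $\eps(b)=\eps(c)=0$; this part is in fact the classical computation for $\SL(2)$, which applies verbatim since the coproduct and counit of $\SL_q(2)$ coincide with those of $\SL(2)$ and the antipode is not involved.

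For the coinvariants I would use the basis $\{X^iY^j\}_{i,j\geq0}$ of $\CC_q[X,Y]$. Let $w=\sum_{i,j}\lambda_{ij}X^iY^j$ satisfy $\delta(w)=w\otimes1$. Using multiplicativity of $\delta$, write $\delta(X^iY^j)=\delta(X)^i\delta(Y)^j$; its $\CC_q[X,Y]$-component has total degree $i+j$, so only the pairs with $i+j=N$ can contribute a term involving the basis vector $X^N(=X^NY^0)$, and in $\delta(X)^i\delta(Y)^j$ the word $X^{i+j}$ arises only by choosing the first summand of every one of the $i+j$ factors, giving exactly $X^{i+j}\otimes a^ib^j$. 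Comparing the $X^N$-components of $\delta(w)$ and $w\otimes1$ therefore yields, for each $N\ge0$,
\[
\sum_{i+j=N}\lambda_{ij}\,a^ib^j=\lambda_{N,0}\,1\quad\text{in }\SL_q(2).
\]
I would then apply the surjective Hopf algebra morphism $\pi\colon\SL_q(2)\to\CC_q[X,X^{-1},Y]$ of Lemma~\ref{lem-noco-nococo}, under which $a^ib^j\mapsto X^iY^j$. As $\{X^iY^j\}_{i\in\ZZZ,\,j\in\NN}$ is a basis of $\CC_q[X,X^{-1},Y]$ in which no vector $X^iY^j$ with $i+j=N\ge1$ equals $1$, linear independence forces all $\lambda_{ij}$ with $i+j\ge1$ to vanish. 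Hence $w=\lambda_{0,0}1\in\CC1$, the reverse inclusion being obvious from $\delta(1)=1\otimes1$.

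The one place requiring care is the cancellation in $\delta(Y)\delta(X)=q\,\delta(X)\delta(Y)$, where the $q$-powers coming from $YX=qXY$ and the relation $ad-da=(q^{-1}-q)bc$ must match up exactly; I regard this bookkeeping as the main obstacle, modest as it is. The only idea beyond routine computation is to push the identity $\sum_{i+j=N}\lambda_{ij}a^ib^j=\lambda_{N,0}1$ forward along $\pi$: the target $\CC_q[X,X^{-1},Y]$ has an explicit basis, so one sidesteps needing a basis of $\SL_q(2)$ itself.
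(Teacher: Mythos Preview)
Your argument is correct. Part\,(a) matches the paper's proof essentially verbatim: verify $\delta(Y)\delta(X)=q\,\delta(X)\delta(Y)$ using the defining relations of~$\SL_q(2)$, then check coassociativity and counitarity on generators.

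For part\,(b) you take a genuinely different route. The paper observes that the composite $\delta'=(\id\otimes\pi)\circ\delta$ coincides on the generators~$X,Y$ with the coproduct~$\Delta$ of the Hopf algebra~$\CC_q[X,X^{-1},Y]$, hence $\delta'$ is the restriction of~$\Delta$ to the subalgebra~$\CC_q[X,Y]$; then it invokes Example\,\ref{exa-H=comodalg} (the coinvariants of a Hopf algebra under its own coproduct are~$\CC 1$) to conclude. Your approach is more computational: you isolate the coefficient of the basis vector~$X^N$ in the first tensor factor, obtain the relation $\sum_{i+j=N}\lambda_{ij}\,a^ib^j=\lambda_{N,0}\,1$ in~$\SL_q(2)$, and then push this through~$\pi$ only to exploit the explicit basis of~$\CC_q[X,X^{-1},Y]$. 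The paper's argument is shorter and more conceptual, since it reduces everything to a single general fact about Hopf algebras; yours is self-contained and avoids identifying~$\delta'$ with~$\Delta$, at the cost of a grading/basis bookkeeping step. Both rely on~$\pi$ to bypass the lack of an explicit basis of~$\SL_q(2)$, but you use it in a more targeted way, only after extracting a scalar identity.
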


The second assertion is the non-commutative analogue of the fact that the only point of the plane 
which is invariant under the action of~$\SL_2(\CC)$ is the origin.

\begin{proof}
(a) We first have to establish that $\delta$ is a morphism of algebras. It suffices to check that
$\delta(Y) \delta(X) = q \, \delta(X) \delta(Y) $.
Using\,\eqref{eq-qplane}, we have
\begin{eqnarray*}
\delta(Y) \delta(X) 
& = & (X \otimes b + Y \otimes d) (X \otimes a + Y \otimes c) \\
& = & X^2 \otimes ba + YX \otimes da + XY \otimes bc + Y^2 \otimes dc .
\end{eqnarray*}
Similarly, 
\begin{eqnarray*}
\delta(X) \delta(Y) 
& = & (X \otimes a + Y \otimes c)  (X \otimes b + Y \otimes d) \\
& = & X^2 \otimes ab + YX \otimes cb + XY \otimes ad + Y^2 \otimes cd .
\end{eqnarray*}
Now using the defining relations of~$\SL_q(2)$ and the relation $YX = qXY$, we obtain
\begin{eqnarray*}
\delta(Y) \delta(X) - q \, \delta(X) \delta(Y)
& = & X^2 \otimes (ba-qab) + YX \otimes (da- qcb) \\ &&
+ XY \otimes (bc- qad) + Y^2 \otimes (dc-qcd) \\
& = & XY \otimes q (da-qcb + q^{-1}bc-ad) \\ 
& = & -XY \otimes q (ad - da- (q^{-1} - q)bc) = 0.
\end{eqnarray*}

The map $\delta$ being a morphism of algebras, it is enough to check its coassociativity and its counitarity
on the generators $X,Y$, which is easy to do.

(b) Let $\omega \in \CC_q[X,Y]$ be a coinvariant element, i.e. $\delta(\omega) = \omega \otimes 1$.
Recall the morphism of Hopf algebras $\pi: \SL_q(2) \to \CC_q[X,X^{-1},Y]$\index{$\CC_q[X,X^{-1},Y]$} 
of Lemma\,\ref{lem-noco-nococo}. The composed map 
\[
\delta' = (\id \otimes \, \pi) \circ \delta : \CC_q[X,Y] \to \CC_q[X,Y] \otimes \CC_q[X,X^{-1},Y]
\] 
turns the quantum plane~$\CC_q[X,Y]$ into a $\CC_q[X,X^{-1},Y]$-comodule algebra.
We have
$\delta'(\omega) = (\id \otimes \, \pi)(\omega \otimes 1) 
= \omega \otimes \pi(1) = \omega \otimes 1$.
Thus $\omega$ is coinvariant for the $\CC_q[X,X^{-1},Y]$-coaction. 
Now it follows from\,\eqref{eq-qplane} and the formula for~$\pi$ that
\begin{equation*}
\delta'(X) = X \otimes \pi(a) + Y \otimes \pi(c) = X \otimes X
\end{equation*}
and 
\begin{equation*}
\delta'(Y) = X \otimes \pi(b) + Y \otimes \pi(d) = X \otimes Y + Y \otimes X^{-1}.
\end{equation*}
Comparing with Formula\,\eqref{eq-copr-qLaurent} for the coproduct~$\Delta$ 
of the Hopf algebra $\CC_q[X,X^{-1},Y]$, 
we see that $\delta'$ is the restriction of~$\Delta$ to the subalgebra~$\CC_q[X,Y]$.
It follows from this remark and from Example\,\ref{exa-H=comodalg} that 
$\omega$ is a scalar multiple of the unit of $\CC_q[X,X^{-1},Y]$, which is also the unit of~$\CC_q[X,Y]$.
\qed
\end{proof}

\begin{exercise}
Let $q$ be a non-zero complex number.
For any integer $r >0$ define the \emph{$q$-integer}\index{$q$-integer}~$[r]$ by 
\begin{equation*}
[r] = 1 + q + \cdots + q^{r-1} = \frac{q^r - 1}{q-1}.
\end{equation*}
and the \emph{$q$-factorial}\index{$q$-factorial} $[r]!$ by
\begin{equation*}
[r]! = \prod_{k = 1}^r \, [k] = \frac{(q-1)(q^2-1) \cdots (q^r - 1)}{(q-1)^r}.
\end{equation*}
We agree that $[0]! = 1$. For $0 \leq r \leq n$ we define the \emph{$q$-binomial coefficient}\index{$q$-binomial coefficient}
\begin{equation*}
\left[
\begin{matrix}
n \\ r
\end{matrix}
\right]
= \frac{[n]!}{[r]! \, [n-r]!}.
\end{equation*}

(a) For $0 < r < n$ show the following $q$-analogue of the \emph{Pascal identity}\index{$q$-Pascal identity}
\begin{equation*}
\left[
\begin{matrix}
n \\ r
\end{matrix}
\right]
= \left[
\begin{matrix}
n-1 \\ r-1
\end{matrix}
\right]
+ q^r
\left[
\begin{matrix}
n-1 \\ r
\end{matrix}
\right] .
\end{equation*}

(b) Let $X,Y$ be variables subject to the relation $YX = qXY$. 
Prove the \emph{$q$-binomial formula}\index{$q$-binomial formula}
\begin{equation*}
(X+Y)^n = \sum_{r=0}^n \, \left[
\begin{matrix}
n \\ r
\end{matrix}
\right]
X^r Y^{n-r} .
\end{equation*}
\end{exercise}

\begin{exercise}\label{exo-qplane-coaction}
Recall the basis $\{X^i Y^j\}_{i,j \in \NN}$ of the quantum plane\index{quantum plane}~$\CC_q[X,Y] $.
Compute $\delta(X^i Y^j)$ for the coaction\,\eqref{eq-qplane}. 
\end{exercise}

\subsection{Quantum homogeneous spaces}\label{ssec-qhomogene}

Let $G$ be an algebraic group and $G'$ be an algebraic subgroup. To this data we associate the 
\emph{homogeneous space}~$G/G'$, 
whose elements are the left cosets~$gG'$ of~$G'$ in~$G$ with respect to $g\in G$;
in other words, two elements $g_1, g_2 \in G$ represent the same element of~$G/G'$
if and only if there exists $g' \in G'$ such that $g_2 = g_1 g'$.

To the inclusion $i : G' \hookrightarrow G$ corresponds
the morphism of Hopf algebras $\pi = i^*: \OO(G) \to \OO(G')$, which sends a function $u\in \OO(G)$
to its restriction to~$G'$. The map~$\pi$ is surjective. 
The composition 
\[
\delta = (\id \otimes \, \pi) \circ \Delta : \OO(G) \to \OO(G) \otimes \OO(G')
\]
turns $\OO(G)$ into an $\OO(G')$-comodule algebra. Let us consider the subalgebra
\[
\OO(G)^{\co-\OO(G')} \subset \OO(G)
\]
of coinvariant elements\index{$A^{\co-H}$}\index{coinvariant}. 

\begin{lemma}
An element $u\in \OO(G)$ belongs to the subalgebra $\OO(G)^{\co-\OO(G')}$ if and only if
$u(gg') = u(g)$ for all $g\in G$ and $g'\in G'$.
\end{lemma}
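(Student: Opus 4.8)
The plan is to unwind the definition of the coaction $\delta$ and of the coinvariance condition $\delta(u) = u \otimes 1$ by translating everything back into statements about honest functions on the variety $G \times G'$.

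First I would use the canonical isomorphism $\OO(G) \otimes \OO(G') \cong \OO(G \times G')$ of\,\eqref{eq-tensor} to regard $\delta(u)$ as a regular function on $G \times G'$. Under this identification, $\Delta$ (that is, $\mu^*$ followed by\,\eqref{eq-tensor}) sends $u$ to the function $(g,h) \mapsto u(gh)$ on $G \times G$, by the very definition\,\eqref{def-pullback} of the pull-back of the multiplication map $\mu : G\times G \to G$. Likewise $\pi = i^*$ is the restriction map along the inclusion $i : G' \hookrightarrow G$, so $\id_{\OO(G)} \otimes \, \pi$ corresponds to restriction of functions along $\id_G \times i : G \times G' \hookrightarrow G \times G$. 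Composing these, $\delta(u)$ is the function on $G \times G'$ given by $(g,g') \mapsto u(g\, i(g')) = u(gg')$.

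Next I would identify the right-hand side of the coinvariance equation: under\,\eqref{eq-tensor}, the element $u \otimes 1_{\OO(G')}$ is simply the function $(g,g') \mapsto u(g)$ on $G \times G'$. Therefore $\delta(u) = u \otimes 1$ holds in $\OO(G) \otimes \OO(G')$ if and only if the two regular functions $(g,g') \mapsto u(gg')$ and $(g,g') \mapsto u(g)$ on $G \times G'$ coincide, i.e.\ if and only if $u(gg') = u(g)$ for all $g \in G$ and $g' \in G'$. Since a regular function on an algebraic variety is determined by its values on points (equivalently, by its images under all characters), the equality of the two functions is equivalent to this pointwise condition, which is exactly the assertion of the lemma.

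The only point requiring a little care — and really the only obstacle, though a mild one — is to apply all three identifications ($\OO$ of a product with a tensor product, pull-backs with precomposition, $\pi$ with restriction) consistently, so that ``$\delta(u)$ evaluated at $(g,g')$'' genuinely equals $u(gg')$; once this bookkeeping is carried out the lemma is immediate.
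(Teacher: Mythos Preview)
Your proof is correct and follows essentially the same approach as the paper: both identify $\OO(G)\otimes\OO(G')$ with $\OO(G\times G')$, compute that $\delta(u)$ becomes the function $(g,g')\mapsto u(gg')$, and observe that $u\otimes 1$ becomes $(g,g')\mapsto u(g)$. Your write-up is slightly more explicit about the bookkeeping (identifying $\id\otimes\pi$ with restriction along $\id_G\times i$, and noting that regular functions are determined by their values on points), but the argument is the same.
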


\begin{proof}
Identifying $\OO(G) \otimes \OO(G')$ with $\OO(G\times G')$ and using Formula\,\eqref{eq-defOG}
for the coproduct of~$\OO(G)$, we see that the above coaction~$\delta$ sends an element
$u\in \OO(G)$ to the function $\delta(u) \in \OO(G\times G')$ given by
\[
\delta(u)(g,g') = u(gg')
\]
for all $g\in G$ and $g'\in G'$. Such an element~$u$ is coinvariant if and only if $\delta(u) = u\otimes 1$,
which is equivalent to $\delta(u)(g,g') = u(g)1$ for all $g\in G$ and $g'\in G'$.
\qed
\end{proof}

It follows from the lemma and the above description of~$G/G'$
that the subalgebra~$\OO(G)^{\co-\OO(G')}$ of coinvariant elements
can be identified with the coordinate algebra\index{coordinate algebra}~$\OO(G/G')$ 
of the homogeneous space~$G/G'$.

The non-commutative analogue of a homogeneous space is the following.
Let $\pi : H \to \bar{H}$ be a surjective morphism of Hopf algebras.
The map 
\[
\delta = (\id \otimes \, \pi) \circ \Delta : H \to H \otimes \bar{H}
\]
turns $H$ into an $\bar{H}$-comodule algebra.
Let us consider the subalgebra~$H^{\co-\bar{H}}$ of coinvariant elements; by analogy with the previous classical case
we call~$H^{\co-\bar{H}}$ 
a \emph{quantum homogeneous space}.\index{quantum homogeneous space}

This general construction provides many examples of quantum homogeneous spaces;
see \cite{BM1,DGH,Ha1,HM,LPR,LS,Po,Sc1}.
We have already encountered such a situation with the surjective morphism of Hopf algebras
$\pi : \SL_q(2) \to \CC_q[X,X^{-1},Y]$ in Sect.\,\ref{ssec-noco-nococo},
where $\CC_q[X,X^{-1},Y]$ has been hinted at as a quantization of the coordinate algebra of 
the subgroup~$B$ of upper triangular matrices in~$SL_2(\CC)$.
It is well known that the homogeneous space $SL_2(\CC)/B$ is in bijection 
with the \emph{projective line}~$\CC\PP^1$.
Therefore the subalgebra $\SL_q(2)^{\co-\CC_q[X,X^{-1},Y]}$ can be seen 
as a quantization\index{quantization}\index{quantum projective line} of~$\CC\PP^1$.

%%%
\section{Hopf Galois extensions}\label{sec-HopfGalois}

It was noticed in the 1990's (see\,\cite{BM1, Du, Sc1}) 
that the right non-commutative version of a principal fiber bundle is the concept of a Hopf Galois extension, 
a notion which had been introduced in the 1960's by algebraists 
in order to extend the classical Galois theory of field extensions to a more general framework.

Let us now define Hopf Galois extensions. 
The use of the word ``Galois'' in this expression will be justified by Example\,\ref{exa-Galoisext} below.

\subsection{Definition and examples}\label{ssec-HopfGalois}

\begin{definition}\label{def-HopfGalois}
Let $H$ be a Hopf algebra\index{Hopf algebra} and $B$ an (associative unital) algebra.
An \emph{$H$-Galois extension}\index{Hopf Galois extension} of~$B$ is 
an $H$-comodule algebra\index{comodule algebra}\index{algebra!comodule}~$A$
with coaction\index{coaction}~$\delta: A \to A \otimes H$ such that
the following three conditions hold:
\begin{enumerate}
\item[(i)] 
$A$ contains $B$ as a subalgebra;

\item[(ii)] 
$B = A^{\co-H} = \{ a \in A \, |\, \delta(a) = a\otimes 1\}$;\index{$A^{\co-H}$}

\item[(iii)] 
the linear map
\begin{equation}\label{eq-beta}
\beta: A \otimes A \to A \otimes H \, ; \;\; a \otimes a' \mapsto (a\otimes 1) \, \delta(a') 
\end{equation}
induces a linear isomorphism $A \otimes _B A \overset{\cong}{\longrightarrow} A \otimes H$.
\end{enumerate}
\end{definition}

Let us comment on Condition\,(iii).
Firstly, the vector space $A \otimes _B A$ is by definition the quotient of~$A \otimes A$ 
by the subspace~$U$ spanned by all tensors of the form
\begin{equation*}
ab \otimes a' - a \otimes ba' . \qquad (a,a'\in A, \, b \in B)
\end{equation*}
Condition\,(iii) implies that the map~$\beta$ factors through the quotient space~$A \otimes _B A$.
Let us check this: it is enough to verify that $\beta$ vanishes on the generators of the subspace~$U$. Indeed,
\begin{eqnarray*}
\beta(ab \otimes a' - a \otimes ba')
& = & (ab \otimes 1) \,\delta(a') - (a \otimes 1) \, \delta(ba') \\
& = & (a \otimes 1)(b \otimes 1) \,\delta(a') - (a \otimes 1)\, \delta(b)\, \delta(a')
= 0
\end{eqnarray*}
in view of the fact that $b$ is coinvariant, hence satisfies $\delta(b) = b \otimes 1$.

The map~$\beta$ in Condition\,(iii) is the non-commutative analogue of the map
$\gamma: G \times P \to P \times P$ defined by\,\eqref{eq-gamma},
and the isomorphism $A \otimes _B A \overset{\cong}{\longrightarrow} A \otimes H$ is the non-commutative
analogue of the bijection $\gamma: G \times P \to P \times_X P$.
For this reason a Hopf Galois extension can be seen as 
a \emph{non-commutative principal fiber bundle}\index{principal fiber bundle!non-commutative}.

\begin{remark}
Let $A$ be an $H$-Galois extension of~$B$. 
Observe that, if $\dim\ A$ is finite, then so are $\dim\ A \otimes A$ and $\dim\ A \otimes_B A$.
In view of the isomorphism $A \otimes _B A \cong A \otimes H$, we deduce that the Hopf algebra~$H$
is finite-dimensional and that $\dim\ H \leq  \dim\ A$. 
If in addition $B = \CC$ is the ground field, then $A \otimes _B A = A \otimes A$ and $\dim\ H = \dim\ A$.
\end{remark}

\begin{remark}\label{rem-fflat}
Sometimes in the definition of an $H$-Galois extension $A$ of~$B$ one also requires 
$A$ to be \emph{faithfully flat} as a left $B$-module.
This means that taking the tensor product $\otimes_B M$ with a sequence 
of right $B$-modules produces an exact sequence if and only if the original sequence is exact. 
Finite-rank free or projective modules are examples of faithfully flat modules.
The Hopf Galois extensions we will consider in Sect.\,\ref{sec-versal} satisfy this extra condition.
\end{remark}

According to\,\cite[Sect.\,7]{CS}, Definition\,\ref{def-HopfGalois} was introduced to give a generalization of 
Galois theory to arbitrary commutative rings, 
the finite group of automorphisms in the classical theory being replaced by a Hopf algebra.

Let us now present the prototypical example of a Hopf Galois extension, which justifies the terminology used.

\begin{example}\label{exa-Galoisext}
If $K$ is a finite \emph{Galois extension}\index{Galois extension} of a number field~$k$
with Galois group~$G$, then by Proposition\,\ref{prop-G-alg}\,(a) the field~$K$ is an $\OO(G)$-comodule $k$-algebra
with coaction~$\delta$ given for all $a\in K$ by
\begin{equation*}
\delta(a) = \sum_{g\in G} \, ga \otimes \delta_g.
\end{equation*}
We know that the subalgebra of coinvariant elements of~$K$ is the subalgebra of $G$-invariant elements,
therefore coinciding with the field~$k$.
The map 
\begin{equation*}
\beta: K \otimes_k K \to K \otimes_k \OO(G)
\end{equation*}
defined by\,\eqref{eq-beta} is an isomorphism (see e.g. \cite[Sect.\,8.1.2]{Mo}). 
Therefore, $K$ is an $\OO(G)$-Galois extension of~$k$. 
\end{example}

Here are more examples of Hopf Galois extensions.

\begin{example}
If $P \to X$ is a \emph{principal $G$-bundle}\index{principal fiber bundle}, then
$\OO(P)$ is an $\OO(G)$-Galois extension of~$\OO(X)$\index{coordinate algebra}.
\end{example}

\begin{example}
Let $A = \CC[x,x^{-1}]$ be the algebra of Laurent polynomials in one variable and let $n \geq 1$ be an integer.
We can give~$A$ a $\ZZZ/n$-grading by setting $\deg (x^i) \equiv i \pmod{n}$. This is a strong grading 
in the sense defined above. 
The algebra~$A$ becomes a $\CC[\ZZZ/n]$-Galois extension of the subalgebra~$B= \CC[x^n,x^{-n}]$.
This is the algebraic version of the principal $\ZZZ/n$-bundle $\pi_n: S^1 \to S^1$
of Example\,\ref{exa-ppal}.
\end{example}

\begin{example}\label{exa-Galois}
\emph{(Strongly graded algebras)}
Let $G$ be a group. We know (see Proposition\,\ref{prop-graded}) that any $G$-graded algebra~$A$ is
a $\CC[G]$-comodule algebra. Recall that the subalgebra of coinvariants is the $e$-compotent~$A_e$.
Such a comodule algebra is a $\CC[G]$-Galois extension of~$A_e$ if and only if $A$ is a
\emph{strongly $G$-graded algebra}\index{strongly graded algebra}\index{algebra!strongly graded}
\index{graded algebra}\index{algebra!graded}, 
i.e. a $G$-graded algebra such that $A_g A_h = A_{gh}$
for all $g,h \in G$ (see\,\cite[Th.\,8.1.7]{Mo}).

The matrix algebra $M_N(\CC)$ with the $\ZZZ/N$-grading given in Example\,\ref{exa-graded-c}\,(b)
and the algebra of quaternions\index{quaternion algebra} 
with the $(\ZZZ/2)^2$-grading of Example\,\ref{exa-graded-d} are strongly graded algebras.
\end{example}

\begin{remark}
In classical differential geometry once one has a principal $G$-bundle, 
one can construct a vector bundle associated with it and with an additional representation of~$G$, 
equip this vector bundle with a connection, and derive various characteristic classes. 
Nowadays these classical constructions have non-commutative counterparts;
for details, see\,\cite{BM1, DGH, Ha1, HM, Pf, Wo2}.
\end{remark}

\subsection{The classification problem}\label{ssec-class}\index{classification}

We say that two $H$-Galois extensions $A,A'$ of~$B$ are \emph{isomorphic}\index{Hopf Galois extension!isomorphism of}
if there is an isomorphism of $H$-comodule algebras $A \to A'$.

In Sect.\,\ref{ssec-functor} (see Corollary\,\ref{coro-ppal})
we showed how to classify principal $G$-bundles:
there exists a bijection
\[
[X,BG] \overset{\cong}{\longrightarrow} \Iso_G(X)
\]
which is functorial in~$X$. 
Recall that $\Iso_G(X)$ is the set of homeomorphism classes of principal $G$-bundles with base space~$X$
and $[X,BG]$ is the set of homotopy classes of continuous maps from~$X$ to~$BG$.

We wish likewise to classify all $H$-Galois extensions of~$B$ up to isomorphism for a
given Hopf algebra~$H$ and a given algebra~$B$. In other words,
we would like to compute the set~$\Gal_H(B)$\index{$\Gal_H(B)$} 
of isomorphism classes of $H$-Galois extensions of~$B$. 

So far not many general results on~$\Gal_H(B)$ are available. Here is one.

\begin{theorem}
The set~$\Gal_H(B)$ is non-empty.
\end{theorem}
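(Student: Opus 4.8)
The plan is to exhibit an explicit $H$-Galois extension of~$B$, namely the \emph{trivial} one, which is the algebraic counterpart of the trivial principal bundle $G \times X$ of Sect.\,\ref{ssec-ppal}. Take $A = B \otimes H$ with the tensor-product algebra structure, and equip it with the coaction $\delta = \id_B \otimes \Delta$, where we identify $B \otimes (H\otimes H)$ with $(B\otimes H)\otimes H$. First I would check that $A$ is an $H$-comodule algebra: the map $\delta$ is a morphism of algebras because it is the tensor product of the algebra morphisms $\id_B$ and $\Delta$, while the coassociativity~\eqref{coass-comod} and counitarity~\eqref{counit-comod} of~$\delta$ follow immediately from the coassociativity~\eqref{eq-coass} and counitarity~\eqref{eq-coun} of~$\Delta$.

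Next I would identify $B$ with the subalgebra $B \otimes 1_H$ of~$A$, which gives Condition\,(i) of Definition\,\ref{def-HopfGalois}, and verify Condition\,(ii), i.e.\ $A^{\co-H} = B \otimes 1_H$. The inclusion $B \otimes 1_H \subseteq A^{\co-H}$ is clear since $\Delta(1_H) = 1_H \otimes 1_H$. Conversely, if $a \in B \otimes H$ satisfies $\delta(a) = a \otimes 1_H$, I would apply the linear map $\id_B \otimes \eps \otimes \id_H$ to both sides: by~\eqref{eq-coun} the left-hand side becomes~$a$, while the right-hand side becomes $(\id_B \otimes \eps)(a) \otimes 1_H \in B \otimes 1_H$, whence $a \in B \otimes 1_H$. (Alternatively this is Example\,\ref{exa-H=comodalg} tensored with~$B$.)

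The only point requiring an actual computation is Condition\,(iii). Here I would use the canonical identification $A \otimes_B A \cong B \otimes H \otimes H$ sending $(b \otimes h) \otimes_B (b' \otimes h')$ to $bb' \otimes h \otimes h'$, which is legitimate because $A = B \otimes H$ is free as a left and as a right $B$-module on any basis of~$H$. Under this identification together with the identification $A \otimes H = B \otimes H \otimes H$, the map~$\beta$ of~\eqref{eq-beta} becomes $\id_B \otimes \beta_H$, where $\beta_H : H \otimes H \to H \otimes H$ is given in Sweedler notation by $\beta_H(h \otimes h') = \sum_{(h')} h\, h'_{(1)} \otimes h'_{(2)}$. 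A direct check using the antipode axioms~\eqref{eq-anti} shows that $\beta_H$ is bijective with inverse $h \otimes h' \mapsto \sum_{(h')} h\, S(h'_{(1)}) \otimes h'_{(2)}$; consequently $\beta$ induces the required isomorphism $A \otimes_B A \overset{\cong}{\longrightarrow} A \otimes H$. Moreover $A = B \otimes H$ is free, hence faithfully flat, as a left $B$-module, so it also satisfies the extra hypothesis mentioned in Remark\,\ref{rem-fflat}. Therefore $A = B \otimes H$ is an $H$-Galois extension of~$B$, and $\Gal_H(B) \neq \emptyset$. The argument has no serious obstacle; the one step deserving care is the explicit identification of $\beta$ with $\id_B \otimes \beta_H$ and the verification that the candidate formula for $\beta_H^{-1}$ is indeed a two-sided inverse — this is the standard fact that $H$ is an $H$-Galois extension of the ground field over itself.
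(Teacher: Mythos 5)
Your proposal is correct and follows essentially the same route as the paper: exhibit the trivial extension $A = B \otimes H$ with coaction $\id_B \otimes \Delta$, check the coinvariants are $B \otimes 1$, and reduce the bijectivity of $\beta$ to the map $\beta_H(x \otimes y) = \sum_{(y)} x y_{(1)} \otimes y_{(2)}$ on $H \otimes H$, inverted via the antipode exactly as in the paper's proof.
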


This is a consequence of the following result.

\begin{proposition}
The tensor product algebra $A = B \otimes H$ is an $H$-Galois extension of~$B = B \otimes 1$ with coaction 
$\delta = \id_B \otimes \, \Delta: A = B \otimes H \to A \otimes H = B \otimes H \otimes H$,
where $\Delta$ is the coproduct of~$H$.
\end{proposition}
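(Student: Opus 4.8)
The plan is to verify the three conditions of Definition~\ref{def-HopfGalois} for the pair $A = B\otimes H$, $B = B\otimes 1$, with coaction $\delta = \id_B \otimes \Delta$; essentially all the substance lies in Condition~(iii). First, since the coproduct $\Delta$ of~$H$ is a morphism of algebras obeying the coassociativity~\eqref{eq-coass} and counitality~\eqref{eq-coun} identities, the map $\delta = \id_B \otimes \Delta : A \to A \otimes H$ is a morphism of algebras obeying~\eqref{coass-comod} and~\eqref{counit-comod}, so that $A$ is an $H$-comodule algebra; and Condition~(i) is immediate upon identifying~$B$ with the subalgebra $B\otimes 1 \subset A$.

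For Condition~(ii) I would argue as in Example~\ref{exa-H=comodalg}: the inclusion $B\otimes 1 \subseteq A^{\co-H}$ holds since $\Delta(1) = 1\otimes 1$, and conversely, writing a coinvariant element as $a = \sum_i b_i \otimes h_i$ with the~$b_i$ linearly independent in~$B$, the equality $\delta(a) = a\otimes 1$ reads $\sum_i b_i \otimes \Delta(h_i) = \sum_i b_i \otimes h_i \otimes 1$, whence $\Delta(h_i) = h_i \otimes 1$ for each~$i$; applying $\eps\otimes\id_H$ and~\eqref{eq-coun} gives $h_i = \eps(h_i)\,1$, so $a \in B\otimes 1$ and $A^{\co-H} = B\otimes 1 = B$.

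The heart of the argument is Condition~(iii). Since the subspaces $B\otimes 1$ and $1\otimes H$ commute inside $A = B\otimes H$, the algebra~$A$ is free, and in particular faithfully flat, both as a left and as a right $B$-module; moreover $A\otimes_B B \cong A$ yields a canonical linear isomorphism $\phi : A\otimes_B A \overset{\cong}{\longrightarrow} A\otimes H$ carrying $a\otimes_B (1_B\otimes h)$ to $a\otimes h$. As recorded in the discussion following Definition~\ref{def-HopfGalois}, the map~$\beta$ factors through $A\otimes_B A$ because the elements of~$B$ are coinvariant; I would then check, by a short computation in Sweedler notation, that the induced map $\bar\beta$ satisfies
\[
\bar\beta\circ\phi^{-1} = \id_B \otimes \gamma : B\otimes H\otimes H \longrightarrow B\otimes H\otimes H,
\]
where $\gamma : H\otimes H \to H\otimes H$ is the canonical Galois map $g\otimes h \mapsto \sum g\,h_{(1)} \otimes h_{(2)}$ of~$H$ viewed as a comodule algebra over itself via~$\Delta$.

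It remains to prove $\gamma$ bijective, which is where the antipode enters: I would exhibit the two-sided inverse $\gamma^{-1}(g\otimes h) = \sum g\,S(h_{(1)}) \otimes h_{(2)}$ and verify $\gamma\circ\gamma^{-1} = \gamma^{-1}\circ\gamma = \id$ using coassociativity~\eqref{eq-coass}, counitality~\eqref{eq-coun}, and the antipode identities~\eqref{eq-anti} in the forms $\sum S(h_{(1)})h_{(2)} = \eps(h)1 = \sum h_{(1)}S(h_{(2)})$. Then $\bar\beta = (\id_B\otimes\gamma)\circ\phi$ is an isomorphism, with $\bar\beta^{-1}(a\otimes h) = \sum a\,(1_B\otimes S(h_{(1)})) \otimes_B (1_B\otimes h_{(2)})$; this last formula could equally well be written down first and checked directly to be a two-sided inverse of~$\bar\beta$. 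I expect the only genuine difficulty to be bookkeeping — keeping track of which copy of~$A$ carries the left and which the right $B$-action in $A\otimes_B A$, and handling the algebra structure of $A\otimes H$ carefully — since all the Hopf-algebraic input is standard. Because $A$ is faithfully flat over~$B$, this example moreover satisfies the extra hypothesis mentioned in Remark~\ref{rem-fflat}.
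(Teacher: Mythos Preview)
Your proposal is correct and follows essentially the same approach as the paper: the paper also identifies $A\otimes_B A \cong B\otimes H\otimes H$, reduces to the bijectivity of the map you call~$\gamma$ (there denoted~$\beta_1$), and exhibits the same two-sided inverse $x\otimes y \mapsto \sum xS(y_{(1)})\otimes y_{(2)}$ verified via the antipode and counit identities. Your additional observation on faithful flatness (Remark~\ref{rem-fflat}) is a small bonus not spelled out in the paper's proof.
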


This Hopf Galois extension is called the \emph{trivial Hopf Galois extension}\index{Hopf Galois extension!trivial}.
Its isomorphism class is thus a special point of~$\Gal_H(B)$, 
just as the trivial principal $G$-bundle\index{principal fiber bundle!trivial}
is a special element of the set~$\Iso_G(X)$ of homeomorphism classes of principal $G$-bundles with given base space~$X$.

\begin{proof}
The map $\delta$ turns $A$ into an $H$-comodule algebra. Proceeding as in Example\,\ref{exa-H=comodalg},
we prove that the subalgebra of coinvariant elements coincides with~$B \otimes 1 = B$.

Finally we have to establish that the map $\beta: A \otimes_B A \to A \otimes H$ of~\eqref{eq-beta} is
an isomorphism. Now 
\[
A \otimes_B A = (B \otimes H) \otimes_B (B \otimes H) = B \otimes H \otimes H
\]
and $A \otimes H = B \otimes H \otimes H$.
It suffices to check that the map $\beta_1: H \otimes H \to H \otimes H$ defined for all $x,y \in H$ by
\begin{equation*}
\beta_1(x\otimes y) =  (x\otimes 1) \, \Delta(y) = \sum_{(y)} \, x y_{(1)} \otimes y_{(2)}
\end{equation*}
is a linear isomorphism
(here again we use the Heyneman--Sweedler sigma notation\index{Heyneman--Sweedler notation} 
of Sect.\,\ref{ssec-Sweedler-notation}). 
Define a map $\beta_2$ in the other direction by
\begin{equation*}
\beta_2(x\otimes y) =  (x\otimes 1) (S \otimes \id)(\Delta(y)) = \sum_{(y)} \, xS(y_{(1)}) \otimes y_{(2)} .
\end{equation*}
On one hand, by \eqref{HS-anti} and\,\eqref{HS-coun} we have
\begin{eqnarray*}
(\beta_1 \circ \beta_2)(x\otimes y)
& = &  \sum_{(y)} \, x S(y_{(1)}) y_{(2)} \otimes y_{(3)} 
=  \sum_{(y)} \, x \eps(y_{(1)}) \otimes y_{(2)} \\
& = & x \otimes  \sum_{(y)} \, \eps(y_{(1)}) y_{(2)} 
= x \otimes y,
\end{eqnarray*}
which proves $\beta_1 \circ \beta_2 = \id_{H \otimes H}$.
On the other,
\begin{eqnarray*}
(\beta_2 \circ \beta_1)(x\otimes y) 
& = &  \sum_{(y)} \, x y_{(1)} S(y_{(2)}) \otimes y_{(3)} 
=  \sum_{(y)} \, x \eps(y_{(1)}) \otimes y_{(2)} \\
& = & x \otimes  \sum_{(y)} \, \eps(y_{(1)}) y_{(2)} 
= x \otimes y.
\end{eqnarray*}
This completes the proof of the bijectivity of~$\beta_1$, hence of~$\beta$.
\qed
\end{proof}

\subsection{The set $\Gal_H(\CC)$ may be non-trivial}\label{ssec-Gal-obj}

We observed in Sect.\,\ref{ssec-fiber} that any fiber bundle over a point is trivial. 
The corresponding result for $H$-Galois extensions of the ground field~$\CC$ may not hold.
To show this let us present examples of Hopf algebras~$H$ for which $\card \Gal_H(\CC) >1$\index{$\Gal_H(\CC)$}.

It is convenient to introduce the following definition.

\begin{definition}\label{def-HGal}
Let $H$ be a Hopf algebra. An \emph{$H$-Galois object}\index{Galois object} 
is an $H$-Galois extension of~$\CC$.
\end{definition}

\subsubsection{The case of a group algebra}\label{sssec-Gal-group}\index{group algebra}\index{algebra!group}

Let us consider $H = \CC[G]$ for some group~$G$. 
We now describe $\Gal_H(\CC)$ for this Hopf algebra.

By Example\,\ref{exa-Galois} we know that any $\CC[G]$-Galois extension~$A$ of~$\CC$ is a strongly $G$-graded
algebra $A = \bigoplus_{g \in G}\, A_g$ such that $A_e = \CC$. Since it is strongly graded, it follows that each
component~$A_g$ is one-dimensional. 
Let us pick a non-zero element $u_g$ in each~$A_g$. Then
the product structure of the algebra~$A$ is determined by the products $u_g u_h$ for each pair $(g,h)$ of elements  
of~$G$. We have
\begin{equation}\label{eq-cochain}
u_g u_h  = \lambda(g,h) \, u_{gh} \in A_{gh}
\end{equation}
for some scalar $\lambda(g,h)$ depending on~$g$ and~$h$. 
Such a scalar is non-zero since by definition the multiplication map
$A_g \times A_h \to A_{gh}$ is surjective. Thus, the family of scalars~$\lambda(g,h)$
defines a map $\lambda: G \times G \to \CC^{\times}$, where $\CC^{\times} = \CC \setminus\{ 0\}$.

The map $\lambda$ satisfies an additional relation called \emph{cocyclicity}\index{cocycle}, originating from the fact that
the product\index{product} of~$A$ is associative.\index{algebra!associative}
Indeed, we have $(u_g u_h) u_k = u_g (u_h u_k)$ for all $g,h,k \in G$. Using\,\eqref{eq-cochain},
we obtain the following equality
\begin{equation}\label{eq-cocycle}
\lambda(g,h) \, \lambda(gh,k) = \lambda(h,k) \, \lambda(g,hk)
\end{equation}
for all $g,h,k \in G$. A map $\lambda: G \times G \to \CC^{\times}$ satisfying the identity\,\eqref{eq-cocycle}
is called a \emph{$2$-cocycle}\index{cocycle} for the group~$G$. 

It can be checked (see any textbook on group cohomology, for instance\,\cite{Br})
that the pointwise multiplication of maps from $G \times G$ to~$\CC^{\times}$ induce an abelian group structure
on the set~$Z^2(G, \CC^{\times})$ of $2$-cocycles for~$G$.

Let us choose another non-zero element $v_g$ in each~$A_g$. Then we have $v_g = \mu(g) \, u_g$ for some non-zero 
scalar~$\mu(g)$. Combining this with\,\eqref{eq-cochain}, we obtain
$v_g v_h  = \lambda'(g,h) \, v_{gh}$, where 
\begin{equation}\label{eq-cobound}
\lambda'(g,h)  = \frac{\mu(g)\mu(h)}{\mu(gh)} \, \lambda(g,h)
\end{equation}
for all $g,h \in G$. We say that two $2$-cocycles $\lambda, \lambda'$ are \emph{cohomologous} if they are related
by an equation of the form\,\eqref{eq-cobound}.
It is easy to check that for any map $\mu: G \to \CC^{\times}$ the assignment $(g,h) \mapsto \mu(g)\mu(h)/\mu(gh)$
is a $2$-cocycle, which we call a \emph{coboundary}\index{coboundary}. 
Moreover, the set~$B^2(G, \CC^{\times})$ of coboundaries is a subgroup of~$Z^2(G, \CC^{\times})$.

We define the \emph{second cohomology group}\index{cohomology group}\index{group cohomology} of~$G$ as the quotient
\begin{equation*}
H^2(G,\CC^{\times}) = Z^2(G, \CC^{\times}) / B^2(G, \CC^{\times}).
\end{equation*}
It follows from the previous arguments that we have a bijection
\begin{equation}\label{eq-H2}
\Gal_{\CC[G]}(\CC) \cong H^2(G,\CC^{\times}) .
\end{equation}

\begin{example}\label{exa-cyclic}
It is well known (see\,\cite[V.6]{Br}) that for a cyclic group~$G$ (infinite or not) we have 
$H^2(G,\CC^{\times}) = 0$;
for such a group $\Gal_{\CC[G]}(\CC)$ is then trivial by\,\eqref{eq-H2}, i.e. any $\CC[G]$-Galois object is trivial.
\end{example}

\begin{example}
Let $G = (\ZZZ/N)^r$ for some integer $r\geq 2$. Then 
\[
H^2(G,\CC^{\times}) \cong (\ZZZ/N)^{r(r-1)/2},
\]
which implies that $\Gal_{\CC[G]}(\CC) > 1$ for such a group. 
This is of course a rather surprising result, which again shows that non-commutative geometry has features
which classical geometry does not have.
\end{example}

\begin{example}
Even more surprising, 
if $G = \ZZZ^r$ is the free abelian group of rank~$r \geq 2$, then 
\[
H^2(G,\CC^{\times}) \cong (\CC^{\times})^{r(r-1)/2}.
\]
Hence, for $r\geq 2$ there are \emph{infinitely many} isomorphism classes of $\CC[\ZZZ^r]$-Galois objects\index{Galois object}.
\end{example}

\begin{remark}
In contrast with Example\,\ref{exa-cyclic}, 
the cohomology group $H^2(\ZZZ/2, \RR^{\times})$ of the cyclic group of order~$2$, now with coefficients 
in $\RR^{\times} = \RR \setminus \{0\}$, is not trivial: 
\[
H^2(G,\RR^{\times}) = \RR^{\times}/(\RR^{\times})^2 \cong \ZZZ/2.
\]
Proceeding as above, we deduce that, up to isomorphism, there are two real $\ZZZ/2$-Galois extensions of~$\RR$.
The trivial one is $\RR[\ZZZ/2] = \RR[x]/(x^2 -1) \cong \RR \times \RR$,
which has zero divisors.
The second one is the field~$\CC = \RR[x]/(x^2 +1)$ of complex numbers. 
Both are two-dimensional superalgebras\index{superalgebra}, 
with the even part spanned by the unit~$1$ and the odd part by the image of~$x$.
\end{remark}

\begin{remark}
Group algebras are cocommutative Hopf algebras and by\,\eqref{eq-H2} 
the group $\Gal_H(\CC)$ is abelian in this case. 
More generally, for any \emph{cocommutative}\index{Hopf algebra!cocommutative} Hopf algebra~$H$, 
the set~$\Gal_H(\CC)$ has the structure of an abelian group;
its product is induced by the cotensor product\footnote{The concept of the cotensor product of comodules was first
introduced in\,\cite{EM}. See also \cite{Mo, Sw}.} of comodule algebras (see for example \cite[10.5.3]{Ca}).
\end{remark}

\subsubsection{Taft algebras}\label{sssec-Taft}

Let $N$ be an integer $\geq 2$ and $q$ a root of unity\index{root of unity} of order~$N$.
The \emph{Taft algebra}\index{Taft algebra}\index{algebra!Taft} of dimension~$N^2$ is
the algebra $H_{N^2}$ generated by two generators $g$, $x$ subject to the relations
\begin{equation*}
g^N = 1, \quad x^N = 0 , \quad xg = q \, gx.
\end{equation*}
It is a Hopf algebra with 
\begin{equation*}
\Delta(g) = g \otimes g, \quad 
\Delta(x) = 1 \otimes x + x \otimes g, \quad 
\eps(g) = 1, \quad 
\eps(x) = 0.
\end{equation*}
This Hopf algebra is neither commutative, nor cocommutative\index{Hopf algebra!non-commutative non-cocommutative}.
When $N=2$, the four-dimensional Hopf algebra~$H_4$ is known under the name 
of \emph{Sweedler algebra}\index{Sweedler algebra}\index{algebra!Sweedler}.

For any $s \in \CC$ consider the algebra
\begin{equation*}
A_s = \CC \, \langle \, G,X\, \rangle /
\left(G^N - 1, \, X^N - s , \, XG - q \, GX \right) .
\end{equation*}
It is a right $H_{N^2}$-Galois object\index{Galois object} with coaction given by
\begin{equation*}
\Delta(G) = G \otimes g, \quad 
\Delta(X) = 1 \otimes x + X \otimes g.
\end{equation*}
By\,\cite[Prop.\,2.17 and Prop.\,2.22]{Ma1} (see also\,\cite{DT2})
any $H_{N^2}$-Galois object is isomorphic to~$A_s$ for some scalar~$s$,
and any two such Galois objects $A_s$ and~$A_t$ are isomorphic if and only if $s=t$.
Therefore,\index{$\Gal_H(\CC)$}
\begin{equation*}
\Gal_{H_{N^2}}(\CC) \cong \CC ,
\end{equation*}
which is an abelian group although the Hopf algebra~$H_{N^2}$ is not cocommutative.

See also\,\cite{Bi1, Bi2, Ne, PvO} for the determination of $\Gal_H(\CC)$ for other finite-dimen\-sion\-al
Hopf algebras~$H$ generalizing the Sweedler algebra.

\subsubsection{The quantum enveloping algebra $U_q\, \gog$}\index{quantum enveloping algebra}

Masuoka\,\cite{Ma3} determined $\Gal_H(\CC)$ when $H = U_q\, \gog$ is Drinfeld--Jimbo's 
quantum enveloping algebra mentioned in Sect.\,\ref{ssec-qea}, Remark\,\ref{rem-Uqg}.
A partial result had been given in\,\cite[Th.\,4.5]{KS} under the form of a surjection
\begin{equation*}
\Gal_H(\CC) \twoheadrightarrow H^2(\ZZZ^r, \CC^{\times}) \cong (\CC^{\times})^{r(r-1)/2},
\end{equation*}
where $r$ is the size of the corresponding Cartan matrix (see also\,\cite{Au1}).

\subsection{Push-forward of central Hopf Galois extensions}\label{ssec-alg-functor}

In Sect.\,\ref{ssec-functor} we saw that, given a continuous map $\varphi: X' \to X$, there is a functorial
map
\begin{equation*}
\varphi^* : \Iso_G(X) \to \Iso_G(X')
\end{equation*}
induced by $P \mapsto \varphi^*(P)$. 

In our algebraic setting we may wonder whether, given a Hopf algebra~$H$
and a morphism of algebras $f: B \to B'$, there exists a functorial map
\begin{equation*}
f_* : \Gal_H(B) \to \Gal_H(B')
\end{equation*}
which would be the algebraic analogue 
of the pull-back of bundles\index{principal fiber bundle!pull-back of}\index{pull-back}.
The most natural way to construct such 
a \emph{push-forward}\index{push-forward}\index{Hopf Galois extension!push-forward of} map~$f_*$ is the following.
Let $A$ be an $H$-Galois extension of~$B$. 
Since $B$ is a subalgebra of~$A$, we can consider $A$ as a left $B$-module.
Given a morphism of algebras $f: B \to B'$, we can then define the left $B'$-module~$f_*(A)$ as
\begin{equation*}
f_*(A) = B' \otimes_B A.
\end{equation*}
Here we have used the fact that $B'$ is a right $B$-module via the morphism of algebras~$f$.
It is clear that if $g: B' \to B''$ is another morphism of algebras, then 
we have a natural isomorphism $(g\circ f)_*(A) \cong g_*(f_*(A))$ of $B''$-modules.

There is however a serious problem with this construction: in general $f_*(A) = B' \otimes_B A$ is not an algebra!
To circumvent this difficulty, we will restrict to
\emph{central $H$-Galois extensions}\index{Hopf Galois extension!central}, 
namely to those for which $B$ is contained in the center of~$A$; this implies of course that $B$ is a commutative algebra
(central Hopf Galois extensions were first discussed in\,\cite{Ru}).
The algebra~$\AA_H$ defined in Sect.\,\ref{sssec-total-alg} below is an (important) example of a central $H$-Galois extension.

We denote by $\ZGal_H(B)$\index{$\ZGal_H(B)$} the set of isomorphism classes of central $H$-Galois extensions of~$B$.
Then a morphism of \emph{commutative}\index{algebra!commutative} algebras $f: B \to B'$
induces a push-forward map $f_* : \ZGal_H(B) \to \ZGal_H(B')$ given by $A \mapsto f_*(A)$
and satisfying the desired functorial properties\footnote{For this to hold we need the extra faithful flatness condition
mentioned in Sect.\,\ref{ssec-HopfGalois}, Remark\,\ref{rem-fflat}.} (see\,\cite{Ka0, KS}).

In particular, let $\chi: B \to \CC$ be a character of~$B$. Then $A \mapsto \chi_*(A)$ induces a map
$\chi_* : \ZGal_H(B) \to \ZGal_H(\CC)$.
Observe that $\ZGal_H(\CC) = \Gal_H(\CC)$ when $B = \CC$ is the ground field, as the latter is always central.
In analogy with the case of a fiber bundle (see Exercise\,\ref{exo-bundle}\,(a)),
we call $\chi_*(A) = \CC \otimes_B A$ the \emph{fiber}\index{fiber bundle!fiber of} of the $H$-Galois extension~$A$ at~$\chi$.
Note that $\chi_*(A) = A / \mm A$, where $\mm$ is the kernel of~$\chi$.

\subsection{Universal central Hopf Galois extensions}\label{ssec-universal}

A non-commutative analogue of the classifying space\index{classifying space}~$BG$ mentioned in Sect.\,\ref{ssec-functor}
would be a central $H$-Galois extension~$\AA_H$\index{$\AA_H$} of some commutative algebra~$\BB_H$\index{$\BB_H$}
such that for any commutative algebra~$B$ and any central $H$-Galois extension~$A$ of~$B$
there exists a morphism of algebras $f: \BB_H \to B$ such that $f_*(\AA_H) \cong A$.
In other words, we would have a functorial surjection
\begin{equation*}
\Alg(\BB_H,B) \twoheadrightarrow \ZGal_H(B)
\end{equation*}
induced by $f \mapsto f_*(\AA_H)$.
Here $\Alg(\BB_H,B)$ is the set of morphisms of algebras from~$\BB_H$ to~$B$.

Does such a central $H$-Galois extension~$\AA_H$ exist for an arbitrary Hopf algebra~$H$?
It is an open question.
We do not even know whether in general there exists a central $H$-Galois extension~$\BB_H \subset \AA_H$
with a natural surjection\index{character of an algebra} 
\begin{equation*}
\Alg(\BB_H,\CC) \twoheadrightarrow \ZGal_H(\CC) = \Gal_H(\CC)
\end{equation*}
from the set of characters\index{character of an algebra} of~$\BB_H$ 
to the set of isomorphism classes of $H$-Galois objects.
If such a surjection existed and was even bijective, then the $H$-Galois objects\index{Galois object} 
would be classified up to isomorphism by the characters of~$\BB_H$.

\begin{example}
Let us give an example for which $H$-Galois objects can be classified by the characters of a commutative algebra~$\BB$.
Take the Taft algebra~$H_{N^2}$ introduced in~Sect.\,\ref{sssec-Taft}.
Let $\BB$ be the polynomial algebra~$\CC[s]$ and $\AA = A_s$ considered as a $\CC[s]$-module,
where $A_s$ is the Galois object defined in \emph{loc.\ cit.} Each complex number~$s$ gives rise to a unique
character $\chi_s$ of~$\CC[s]$; it is tautologically defined by $\chi(s) = s$.
The map $s \mapsto \chi_s$ induces a bijection $\CC \to \Alg(\CC[s], \CC) = \Alg(\BB, \CC)$.
Now the assignment $\chi_s \mapsto (\chi_s)_*(\AA)$ induces a bijection
\[
\Alg(\BB, \CC) \overset{\cong}{\longrightarrow} \Gal_{H_{N^2}}(\CC) .
\]
\end{example}

When in 2005 I lectured on Hopf Galois extensions at the \emph{XVIo Coloquio Latino\-americano de \'Algebra}
in Colonia del Sacramento, Uruguay, I raised the question of the existence of a universal central Hopf Galois extension.
Eli Aljadeff immediately suggested the use of an appropriate theory of 
polynomial identities, based on his joint work\,\cite{AHN} with Haile and Natapov on group-graded algebras. 
In\,\cite{AK} we implemented Aljadeff's idea, using a theory of polynomial identities
for comodule algebras. Given a Hopf algebra~$H$ and an $H$-comodule algebra~$A$, we constructed
a ``universal $H$-comodule algebra''~$\UU_H(A)$ out of these identities. Localizing~$\UU_H(A)$,
we obtained a central $H$-Galois extension~$\AA_H$ of some commutative algebra~$\BB_H$,
the latter being a nice domain. 
The Hopf Galois extension $\BB_H \subset \AA_H$ comes with a map of the form
\[
\Alg(\BB_H, \CC) \to \Gal_H(\CC)\, ; \;\; \chi \mapsto \chi_*(\AA_H).
\]

In the next section we will construct this central $H$-Galois extension directly,
without passing through polynomial identities. Nevertheless the reader interested in polynomial identities,
the universal $H$-comodule algebra~$\UU_H(A)$ and the precise connection with
the central $H$-Galois extension constructed in Sect.\,\ref{ssec-generic}, may learn the details from~\cite{AK,Ka15}.

%%%
\section{Flat deformations of Hopf algebras}\label{sec-versal}

\medskip
\begin{flushright}
\emph{De pronto me sent{\'\i} pose{\'\i}do
por un aura\\ de inspiraci\'on
que me permiti\'o improvisar\\
respuestas cre{\'\i}bles y chiripas milagrosas.\\ 
Salvo en las matem\'aticas,
que no se me\\ rindieron ni en lo que Dios quiso.} \cite{GM}\\
\end{flushright}
\medskip

Let $H$ be a Hopf algebra.
The aim of this final section is to construct the commutative algebra~$\BB_H$
and the central $H$-Galois extension~$\AA_H$ of~$\BB_H$ we have just mentioned. 
When $H$ is finite-dimensional, the algebra~$\BB_H$ is the coordinate algebra of a smooth algebraic variety whose
dimension is equal to~$\dim\ H$. 
The algebra~$\AA_H$ is a deformation\index{deformation} of~$H$ as an $H$-comodule algebra;
this deformation is parametrized by the characters of~$\BB_H$.

We conclude these notes by showing how to apply these constructions to the quantum enveloping algebra~$U_q\,\gs\gl(2)$ 
and to its finite-dimensional quotients~$\gu_d$.

\subsection{A universal construction by Takeuchi}\label{sec-Takeuchi}

Let $C$ be a \emph{coalgebra}\index{coalgebra}, 
that is a vector space equipped with two linear maps
$\Delta : C \to C \otimes C$ (called the \emph{coproduct}\index{coproduct}) and 
$\eps: C \to \CC$ (called the \emph{counit}\index{counit})
satisfying the coassociativity\index{coassociativity} identity\,\eqref{eq-coassoc}
and the counitality\index{counitality} identity\,\eqref{eq-counit}.
There is a coalgebra underlying any bialgebra or any Hopf algebra.

Takeuchi\,\cite[Chap.\,IV]{Ta} proved the following result.

\begin{theorem}\label{th-Tak}
Given a coalgebra~$C$, 
there exist a commutative Hopf algebra~$\SS_C$\index{$\SS_H$} and a morphism of coalgebras $t: C \to \SS_C$
such that for any morphism of coalgebras $f : C \to H'$ to a commutative Hopf algebra~$H'$
there is a unique morphism of Hopf algebras 
\[
\widetilde{f} : \SS_C \to H'
\]
satisfying $f = \widetilde{f} \circ t$. The Hopf algebra~$\SS_C$ is unique up to unique isomorphism.
\end{theorem}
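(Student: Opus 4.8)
The plan is to build $\SS_C$ as the \emph{free commutative Hopf algebra on the coalgebra $C$}, obtained in two stages: first produce a free commutative bialgebra on $C$, then invert the antipode by a further universal (localization-type) construction. For the first stage, let $T(C) = \bigoplus_{n \geq 0} C^{\otimes n}$ be the tensor algebra on the underlying vector space of $C$, and let $S(C)$ be its symmetrization, i.e. the free commutative algebra generated by the vector space $C$. The coproduct $\Delta_C$ and counit $\eps_C$ of the coalgebra $C$ extend uniquely to algebra morphisms $\Delta : S(C) \to S(C) \otimes S(C)$ and $\eps : S(C) \to \CC$ by the universal property of the symmetric algebra (one defines them on generators $c \in C$ by $\Delta(c) = \sum_{(c)} c_{(1)} \otimes c_{(2)}$ and $\eps(c) = \eps_C(c)$, then extends multiplicatively). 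Coassociativity and counitality on $S(C)$ follow because both sides of each identity are algebra morphisms agreeing on the generating set $C$, where they coincide by the coalgebra axioms for $C$. This makes $S(C)$ a commutative bialgebra together with the coalgebra map $\iota : C \hookrightarrow S(C)$ sending $c$ to its degree-one image, and it is straightforward that $S(C)$ is the universal commutative bialgebra receiving a coalgebra map from $C$: any coalgebra map $f : C \to B$ into a commutative bialgebra extends to the unique algebra map $\bar f : S(C) \to B$, which is automatically a bialgebra map since $\Delta_B \circ \bar f$ and $(\bar f \otimes \bar f) \circ \Delta$ are algebra maps agreeing on $C$.

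For the second stage, I would pass from the free commutative bialgebra to a free commutative Hopf algebra by the standard device of adjoining a formal inverse to the identity map under convolution. Concretely, form a second copy $S(C)'$ of $S(C)$ with generators $c'$ for $c \in C$, set $\SS_C$ to be the quotient of the free product (in commutative algebras, the tensor product) $S(C) \otimes S(C)'$ by the two-sided ideal generated by the elements
\begin{equation*}
\sum_{(c)} c_{(1)} \, (c_{(2)})' - \eps_C(c)\, 1 \quad\text{and}\quad \sum_{(c)} (c_{(1)})' \, c_{(2)} - \eps_C(c)\, 1 \qquad (c \in C),
\end{equation*}
and equip it with the bialgebra structure in which $c \mapsto \Delta(c)$ as before and $c' \mapsto \sum_{(c)} (c_{(2)})' \otimes (c_{(1)})'$ (the opposite-coproduct rule forced by Proposition~\ref{prop-anti}(a)). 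One then checks that the assignment $S \colon c \mapsto c'$, $c' \mapsto c$ extends to a well-defined algebra anti-endomorphism (here anti = ordinary, since we are commutative) satisfying the antipode axiom~\eqref{eq-anti} on generators, hence everywhere, so $\SS_C$ is a commutative Hopf algebra. The composite $t : C \xrightarrow{\iota} S(C) \to \SS_C$ is a coalgebra map.

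It remains to verify the universal property against Hopf algebras. Given a coalgebra map $f : C \to H'$ with $H'$ a commutative Hopf algebra with antipode $S'$, first-stage universality gives a bialgebra map $\bar f : S(C) \to H'$; extend it to $S(C) \otimes S(C)'$ by sending $c' \mapsto S'(f(c))$. The defining relations of $\SS_C$ are killed because $S'$ satisfies~\eqref{HS-anti} in $H'$, so $\bar f$ descends to a Hopf algebra map $\widetilde f : \SS_C \to H'$ with $f = \widetilde f \circ t$. Uniqueness: any Hopf map $g$ with $g \circ t = f$ is determined on the generators $c$ by $g(t(c)) = f(c)$, hence on the $c' = S_{\SS_C}(c)$ by $g(c') = S'(f(c))$, and the generators $\{c, c' : c \in C\}$ generate $\SS_C$ as an algebra; so $g = \widetilde f$. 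Finally, uniqueness of $(\SS_C, t)$ up to unique isomorphism is the usual formal consequence of a universal property.

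The main obstacle I anticipate is \textbf{the second stage}: checking that the quotient $\SS_C$ genuinely carries a Hopf (not merely bi-) algebra structure, i.e. that the proposed coproduct is well defined on the quotient and that $S$ is a genuine antipode and not just a one-sided convolution inverse on generators. One must verify that $\Delta$ descends — equivalently, that $\Delta$ applied to each defining relation lands in (ideal)$\,\otimes S(C)\otimes S(C)' + S(C)\otimes S(C)' \otimes\,$(ideal) — which is a short but genuine computation using coassociativity; and then that the antipode identity, known on the generating set, propagates to products, for which the cleanest route is the convolution-algebra argument (the set of elements on which $S$ is a two-sided convolution inverse to $\id$ is a subalgebra containing the generators). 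Takeuchi's original treatment organizes all of this cleanly, and I would follow~\cite[Chap.~IV]{Ta} for the bookkeeping rather than reproduce it in full here.
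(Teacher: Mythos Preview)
Your construction is correct and essentially follows Takeuchi's original generators-and-relations approach: adjoin formal convolution-inverse symbols $c'$ and quotient by the antipode identities. The paper, however, takes a genuinely different route in the second stage. After forming the commutative bialgebra $\Sym(t_C)$ (identical to your first stage), it does \emph{not} adjoin abstract generators but instead passes to the field of fractions $\Frac\Sym(t_C)$ and invokes a lemma (\cite[Lemma~A.1]{AK}) asserting that there is a unique linear map $t^{-1}: C \to \Frac\Sym(t_C)$ satisfying $\sum_{(x)} t^{-1}_{x_{(1)}} t_{x_{(2)}} = \eps(x)1 = \sum_{(x)} t_{x_{(1)}} t^{-1}_{x_{(2)}}$; the Hopf algebra $\SS_C$ is then realized concretely as the subalgebra of $\Frac\Sym(t_C)$ generated by all $t_x$ and $t^{-1}_x$. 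Your quotient construction is more elementary in that it avoids this non-obvious existence lemma and the well-definedness checks you flag are indeed routine; but the paper's embedding into a field of fractions buys something important: it shows at once that $\SS_C$ is an integral domain, a fact the paper uses later (e.g.\ in Theorem~\ref{th-BH}) and which is not at all transparent from your presentation by generators and relations.
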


We say that $\SS_C$ is the \emph{free commutative Hopf algebra}\index{Hopf algebra!free commutative} over the coalgebra~$C$.
It can be constructed as follows.

\subsubsection{Construction of~$\SS_C$}\label{sec-construction}

Pick a copy $t_C$ of the underlying vector space of~$C$, that is to say we assign a symbol~$t_x$ 
to each element $x\in C$ so that the map $x \mapsto t_x$ is linear and defines a linear isomorphism $t: C \to t_C$.
Let $\Sym(t_C)$ be the \emph{symmetric algebra}\index{symmetric algebra}\index{algebra!symmetric}
over the vector space~$t_C$. It means concretely the following:
if $\{x_i\}_{iÊ\in I}$ is a basis of~$C$, then $\Sym(t_C)$ is the algebra $\CC[t_{x_i}]_{ i\in I}$ 
of polynomials in the variables~$t_{x_i}$.

The commutative algebra~$\Sym(t_C)$ is a bialgebra\index{bialgebra} with coproduct and counit given 
on the generators~$t_x$ (in terms of the Heyneman--Sweedler notation)\index{Heyneman--Sweedler notation} by
\begin{equation}\label{tx-coprod}
\Delta(t_x) = \sum_{(x)} \, t_{x_{(1)}} \otimes t_{x_{(2)}}
\quad\text{and}\quad
\eps(t_x) = \eps(x). \qquad (x\in C)
\end{equation}

In general, the bialgebra $\Sym(t_C)$ does not have an antipode: 
indeed, if $x \in C$ is a group-like element\index{group-like element}, then by\,\eqref{def-grouplike}
we have $\Delta(t_x) = t_x \otimes t_x$ and $\eps(t_x) = 1$. If there existed an antipode~$S$, then it would follow
from the previous equalities and from\,\eqref{eq-anti} that $S(t_x) t_x = 1$, hence
$S(t_x)  = 1/t_x$, which is not a polynomial. 
But this computation gives us hope that we may turn the bialgebra~$\Sym(t_C)$ into a Hopf algebra 
by using rational algebraic fractions instead of mere polynomials. This can indeed be done thanks to the following fact.

Let us denote by $\Frac\Sym(t_C)$ the field of fractions of~$\Sym(t_C)$: if $\{x_i\}_{iÊ\in I}$ is a basis of~$C$, 
then $\Frac\Sym(t_C)$ is the algebra of rational algebraic fractions in the variables~$t_{x_i}$ ($i\in I$).
There exists a unique linear map $t^{-1}: C \to \Frac\Sym(t_C)$ such that 
\begin{equation*}
\sum_{(x)} \, t^{-1}_{x_{(1)}} t_{x_{(2)}} = \eps(x) 1
= \sum_{(x)} \, t_{x_{(1)}}  t^{-1}_{x_{(2)}}
\end{equation*}
for all $x\in C$ (for a proof, see\,\cite[Lemma\,A.1]{AK}).
Then the subalgebra of $\Frac\Sym(t_C)$ generated by all elements~$t_x$ and~$t^{-1}_x$ ($x \in C$)
satisfies the requirements of Theorem\,\ref{th-Tak} to be the free commutative Hopf algebra~$\SS_C$. 
This subalgebra is a Hopf algebra with coproduct and counit given by\,\eqref{tx-coprod}
and the additional formulas
\begin{equation*}
\Delta(t^{-1}_x) = \sum_{(x)} \, t^{-1}_{x_{(2)}} \otimes t^{-1}_{x_{(1)}}
\quad\text{and}\quad
\eps(t^{-1}_x) = \eps(x). \qquad (x\in C)
\end{equation*}
The antipode is given on the generators~$t_x$ and~$t_x^{-1}$ by 
\begin{equation*}
S(t_x) = t^{-1}_x
\quad\text{and}\quad
S(t^{-1}_x) = t_x .
\end{equation*}
To check the universal property in Theorem\,\ref{th-Tak}, define the morphism $\widetilde{f}: \SS_C \to H'$ 
by $\widetilde{f}(t_x) = f(x)$ and $\widetilde{f}(t^{-1}_x) = S'(f(x))$, 
where $S'$ is the antipode of~$H'$.

It follows by construction that $\SS_C$, being a subalgebra of some field of rational functions, is
a \emph{domain}, i.e. an algebra without zero divisors.

In the sequel we will apply Takeuchi's construction to the underlying coalgebra of an arbitrary Hopf algebra~$H$, 
thus leading to the commutative algebra~$\SS_H$.

\subsubsection{Pointed Hopf algebras}\label{ssec-pointed}

A Hopf algebra is \emph{pointed}\index{Hopf algebra!pointed} if any simple subcoalgebra
is one-dimen\-sional. Group algebras, Taft algebras, enveloping algebras of Lie algebras, 
Drinfeld-Jimbo quantum enveloping algebras~$U_q\, \gog$
and their quotients are examples of pointed Hopf algebras.

When $H$ is a pointed Hopf algebra, then the free commutative Hopf algebra~$\SS_H$ 
over the coalgebra underlying~$H$
has a simple description in terms of the group $\Gr(H)$\index{$\Gr(H)$}
of group-like elements\index{group-like element} introduced in Sect.\,\ref{ssec-Hopfalg}, namely
\begin{equation}\label{SH-pointed}
\SS_H = \Sym(t_H)\left[ \frac{1}{t_g} \right]_{g \in \Gr(H)}.
\end{equation}

\begin{example}
If $H = \CC[G]$ is a group algebra\index{group algebra}\index{algebra!group}, 
then $\Sym(t_H)$ is the polynomial algebra
\begin{equation*}
\Sym(t_H) = \CC[t_g]_{g\in G}.
\end{equation*}
Since $H$ is pointed and $\Gr(H) = G \subset \CC[G]$, then by\,\eqref{SH-pointed} 
the free commutative Hopf algebra~$\SS_H$ is the algebra of \emph{Laurent polynomials} on the symbols~$t_g$ ($g\in G$), 
or equivalently the algebra of the free abelian group~$\ZZZ^{(G)}$ generated by the symbols~$t_g$:
\begin{equation*}
\SS_H = \CC[t_g, t_g^{-1}]_{g\in G}  = \CC[\ZZZ^{(G)}].
\end{equation*}
\end{example}

\begin{example}
Let $G$ be a finite group and $H$ be the function algebra\index{function algebra}~$\OO(G)$ 
(this Hopf algebra is not pointed when $G$ is not abelian).
Then $\Sym(t_H) = \CC[t_g \, |\, g\in G]$ and
\begin{equation*}\label{eq-Theta}
\SS_H = \CC[t_g]_{g\in G} \left[ \frac{1}{\Theta_G} \right],
\end{equation*}
where $\Theta_G = \det (t_{gh^{-1}})_{g,h\in G}$ is \emph{Dedekind's group determinant}\index{group determinant}
(see\,\cite[App.\,B]{AK}).
\end{example}

\subsection{The generic Hopf Galois extension associated with a Hopf algebra}\label{ssec-generic}
\index{generic Hopf Galois extension}\index{Hopf Galois extension!generic}

In this section we associate with any Hopf algebra~$H$ a central $H$-Galois extension $\BB_H \subset \AA_H$,
where the ``base space''~$\BB_H$ is a nice commutative algebra whose size is related to the dimension of~$H$.
We can see~$\AA_H$ as a deformation\index{deformation} of~$H$ over the parameter space~$\BB_H$.

\subsubsection{The algebra~$\BB_H$}\label{sssec-base-alg}

Let $H$ be a Hopf algebra. In order to construct the ``base space''~$\BB_H$\index{$\BB_H$}
we apply Takeu\-chi's theorem to the situation where $C$ is the coalgebra underlying~$H$ and 
$H' = H_{\ab}$\index{$H_{\ab}$} is the largest commutative Hopf algebra quotient of~$H$: 
it is the quotient of~$H$ by the ideal generated by all commutators $xy - yx$ ($x,y \in H$). 

Let $\pi : H  \to H_{\ab}$ be the canonical Hopf algebra surjection. Then by Theorem\,\ref{th-Tak}, 
for the free commutative Hopf algebra\index{Hopf algebra!free commutative}~$\SS_H$\index{$\SS_H$}
there exists a unique morphism of Hopf algebras $\widetilde{\pi}: \SS_H \to H_{\ab}$
such that $\pi = \widetilde{\pi} \circ t$.
The Hopf algebra $\SS_H$ becomes an $H_{\ab}$-comodule algebra 
with coaction 
\begin{equation}\label{eq-coaction}
\delta = (\id \otimes \, \widetilde{\pi}) \circ \Delta.
\end{equation}
On the generators of~$\SS_H$ the coaction is given by
\begin{equation*}
\delta(t_x) = \sum_{(x)} \, t_{x_{(1)}} \otimes \,\widetilde{\pi}(x_{(2)})
\quad\text{and}\quad
\delta(t^{-1}_x) = \sum_{(x)} \, t^{-1}_{x_{(2)}} \otimes \, \widetilde{\pi}\left( S(x_{(1)}) \right).
\end{equation*}

\begin{definition}
The \emph{algebra}~$\BB_H$ associated with a Hopf algebra~$H$ is the subalgebra of coinvariants\index{$A^{\co-H}$}
\index{coinvariant} of $\SS_H$ for this coaction:
\begin{equation*}
\BB_H = \SS_H^{\co-H_{\ab}} = \left\{ a\in \SS_H \, |\, \delta(a) = a \otimes 1\right\}.
\end{equation*}
\end{definition}

We call~$\BB_H$ the \emph{generic base algebra}\index{generic base algebra} of the Hopf algebra~$H$.
It has the following nice properties (see\,\cite[Th.\,3.6 and Cor.\,3.7]{KM1} and \cite[Prop.\,3.4]{KM2}).

\begin{theorem}\label{th-BH}
Let $H$ be a finite-dimensional Hopf algebra.

(a) The algebra~$\BB_H$ is a finitely generated smooth Noetherian domain; 
its Krull dimension\footnote{The Krull dimension of~$\BB_H$
is the dimension of the algebraic variety~$V$ such that $\BB_H = \OO(V)$.} is equal to~$\dim\ H$.

(b) $\SS_H$ is a finitely generated projective $\BB_H$-module.

(c) If in addition $H$ is pointed\index{Hopf algebra!pointed}, then 
\begin{equation*}
\BB_H = \CC[u_1^{\pm 1}, \ldots, u_{\ell}^{\pm 1}, u_{\ell+1}, \ldots, u_n],
\end{equation*}
where $n = \dim\ H$ and $\ell = \card \Gr(H)$ and  where $u_1, \ldots, u_n$ are monomials in the generators~$t_x$
of~$\Sym(t_H)$.
\end{theorem}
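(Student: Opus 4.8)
The plan is to analyze the $H_{\ab}$-coaction on $\SS_H$ explicitly and to identify $\BB_H$ as a ring of ``relative invariants'' that can be computed monomial by monomial. First I would recall that, by Theorem~\ref{th-Tak} and its proof in Sect.~\ref{sec-construction}, $\SS_H$ is generated as an algebra by the symbols $t_x$ and $t_x^{-1}$ ($x\in H$), with coproduct given by~\eqref{tx-coprod}. The coaction $\delta=(\id\otimes\widetilde{\pi})\circ\Delta$ is a morphism of algebras, so $\BB_H=\SS_H^{\co-H_{\ab}}$ is a subalgebra. The key reduction is that $H_{\ab}$ is a \emph{commutative} Hopf algebra; if $H$ is moreover pointed, then $\Gr(H)$ spans a group algebra sitting inside $H$, its image generates $H_{\ab}$ as an algebra after suitable quotienting, and one shows that $H_{\ab}$ is itself (a quotient of) the group algebra $\CC[\Gr(H)]$ together with a residual ``primitive'' part coming from the coradical filtration. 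The upshot I would establish is that $\widetilde{\pi}$ factors so that the relevant coaction on $\SS_H$ is governed by the group $\Gr(H)$ acting by a grading: $\SS_H$ becomes $\Gr(H)$-graded, the degree of $t_x$ being $\pi(x)\in\Gr(H_{\ab})=\Gr(H)$ when $x$ is group-like, and $\BB_H$ is the degree-$e$ part together with the localization needed to make the $t_g$ ($g\in\Gr(H)$) invertible.

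Next, for part~(c), I would make the monomial description precise. Writing a PBW-type or coradical-filtration basis of $H$ adapted to $\Gr(H)$, one gets that $\Sym(t_H)$ is a polynomial ring in variables $t_x$ indexed by this basis, and the $\Gr(H)$-grading assigns weights to these variables; the weight-zero subring, after inverting the $t_g$ for $g\in\Gr(H)$, is a Laurent-polynomial ring in $\ell=\card\Gr(H)$ monomials $u_1,\dots,u_\ell$ (ratios and products of the $t_g$) together with $n-\ell$ further polynomial variables $u_{\ell+1},\dots,u_n$ built from the non-group-like generators divided by appropriate $t_g$'s to cancel their weight. Counting: $\dim H = n$ generators in the basis, $\ell$ of them group-like contributing invertible monomials and $n-\ell$ contributing ordinary polynomial generators, which gives exactly the stated form $\BB_H=\CC[u_1^{\pm1},\dots,u_\ell^{\pm1},u_{\ell+1},\dots,u_n]$. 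Parts~(a) and~(b) then follow in the pointed case by inspection: a Laurent-polynomial ring over $\CC$ is a finitely generated smooth Noetherian domain of Krull dimension equal to the number of variables, here $n=\dim H$, and $\SS_H$ is free (hence projective) over it with basis the monomials of nonzero $\Gr(H)$-weight.

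For the general finite-dimensional case (parts~(a) and~(b) without the pointedness hypothesis), I would not have an explicit monomial description and would instead argue structurally, citing the faithful flatness and Galois machinery: $\SS_H$ is a faithfully flat $H_{\ab}$-Galois extension of $\BB_H$ (this is where one uses that Takeuchi's $\SS_H$ is a domain and the antipode formulas $S(t_x)=t_x^{-1}$ make the canonical map $\beta$ invertible), so $\SS_H$ is projective over $\BB_H$ of rank $\dim H_{\ab}$; smoothness and the Krull-dimension statement descend from smoothness of $\SS_H$ via faithfully flat descent along $\BB_H\subset\SS_H$, the dimension equality $\dim\BB_H=\dim\SS_H-\dim H_{\ab}$ combined with $\dim\SS_H=\dim H$ (one generator $t_x$ per basis element of $H$) and $\dim H_{\ab}=\dim H-\dim\BB_H$ giving the count after the dust settles --- this bookkeeping is essentially the content of \cite[Th.~3.6, Cor.~3.7]{KM1} and \cite[Prop.~3.4]{KM2}, to which I would defer for the detailed verification.

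The main obstacle I anticipate is the structural input in the non-pointed case: showing that $\SS_H\to\BB_H$ is faithfully flat and that smoothness descends requires the full Galois-descent apparatus and a careful analysis of $H_{\ab}$, and is genuinely the hard part --- by contrast, the pointed case in part~(c) is essentially an explicit computation with weighted polynomial rings once the $\Gr(H)$-grading on $\SS_H$ is set up correctly.
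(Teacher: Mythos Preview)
The paper itself does not give a proof of this theorem; it simply records the statement and refers the reader to \cite[Th.\,3.6 and Cor.\,3.7]{KM1} and \cite[Prop.\,3.4]{KM2}. So there is no in-paper argument to compare against, and in fact your proposal ends at the same place, deferring the hard verification to those same references.

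That said, a couple of points in your sketch are muddled and would need repair before they could stand as an actual proof. First, your dimension bookkeeping in the non-pointed case is circular: you write $\dim\BB_H=\dim\SS_H-\dim H_{\ab}$ and then $\dim H_{\ab}=\dim H-\dim\BB_H$, which is using the conclusion to derive itself, and it also conflates Krull dimension with vector-space dimension. The correct argument is simpler: since $H$ is finite-dimensional, so is $H_{\ab}$, hence $\Spec H_{\ab}$ is zero-dimensional; a faithfully flat finite extension preserves Krull dimension, and $\SS_H$ is a localization of a polynomial ring in $\dim H$ variables, so $\operatorname{Krull}\dim\BB_H=\operatorname{Krull}\dim\SS_H=\dim H$. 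Second, in the pointed case your claim that $\SS_H$ is free over $\BB_H$ ``with basis the monomials of nonzero $\Gr(H)$-weight'' is not right as stated: the basis should be a set of coset representatives for the grading group (one monomial per degree), not all nonzero-weight monomials. Finally, the assertion that for a finite-dimensional pointed Hopf algebra the coaction on $\SS_H$ reduces to a $\Gr(H)$-grading requires the lemma (cited in the paper as \cite[Lemma\,2.1]{KM2}) that $H_{\ab}$ is then a group algebra of an abelian group; you gesture at this but the ``residual primitive part'' you mention does not survive in $H_{\ab}$ over~$\CC$, and your sketch should say so explicitly.
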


\begin{example}
If $H = \CC[G]$ be a group algebra\index{group algebra}\index{algebra!group}, then $H_{\ab} = \CC[\Gamma]$, 
where $\Gamma = G /[G,G]$ is the maximal abelian quotient of~$G$, i.e.
the quotient by the normal subgroup generated by all elements of the form $ghg^{-1}h^{-1}$.
Let $p: \ZZZ^{(G)} \to \Gamma$ be the homomorphism sending each generator~$t_g$ to the image
of~$g$ in~$\Gamma$. Let $Y_G$ be the kernel of~$p$. Then by\,\cite[Prop.\,9 and Prop.\,14]{AHN},
\begin{equation*}\index{$\BB_H$}
\BB_H = \CC[Y_G].
\end{equation*}
When $G$ is a finite group, then $Y_G$ is a free abelian subgroup of~$\ZZZ^{(G)}$ of finite index
(equal to the order of~$\Gamma$). A basis of~$Y_G$ is given in\,\cite[Lemma\,4.7]{KM2}
(see also\,\cite[App.\,A]{IK}).
\end{example}

\begin{example}\label{exa-BH}
For a Hopf algebra~$H$ it may happen that $H_{\ab} = \CC[\Gamma]$ is the algebra 
of an abelian group~$\Gamma$, for instance when the commutative Hopf algebra~$H_{\ab}$ 
is finite-dimensional and pointed (see\,\cite[Lemma\,2.1]{KM2}).
Then by Proposition\,\ref{prop-graded} the algebra~$\SS_H$ is $\Gamma$-graded with
$\SS_H = \bigoplus_{\gamma\in \Gamma} \, \SS_{H}(\gamma)$, where
\begin{equation*}
\SS_{H}(\gamma) = \left\{ a\in \SS_H \, |\, \delta(a) = a \otimes \gamma\right\},
\end{equation*}
and $\BB_H = \SS_{H}(0)$ is the component of~$\SS_H$ corresponding to the unit element~$0\in \Gamma$.
\end{example}

\begin{example}
Let $G$ be a finite group and $H = \OO(G)$. Since this Hopf algebra is commutative, we have $H_{\ab} = H$.
Therefore the morphism of Hopf algebras $\widetilde{\pi}: \SS_H \to H$ is split by the morphism
of coalgebras $t: H \to \SS_H$, i.e., $\widetilde{\pi} \circ t = \id_H$.
The coaction\,\eqref{eq-coaction} turns~$\SS_H$ into an $\OO(G)$-comodule algebra.
Thus by Proposition\,\ref{prop-G-alg}, $\SS_H$~is a $G$-algebra. One checks that
$G$ acts on $\SS_H = \CC[t_g]_{g\in G}[1/{\Theta_G}]$ by $g \cdot t_h = t_{gh}$ ($g,h \in G$)
and that the square~$\Theta_G^2$ of the Dedekind group determinant\index{group determinant} 
is $G$-invariant\index{invariant}. Therefore,
\begin{equation*}\index{$\BB_H$}
\BB_H = \CC[t_g]_{g\in G}^G \left[ \frac{1}{\Theta_G^2} \right],
\end{equation*}
where $\CC[t_g]_{g\in G}^G$ is the subalgebra of $G$-invariant polynomials.
\end{example}

The algebra~$\BB_H$ has also been completely described 
for the Sweedler algebra\index{Sweedler algebra} in\,\cite{AK} (see also\,\cite{Ka1}), 
for the Taft algebras\index{Taft algebra} and other natural generalizations of the Sweedler algebra in\,\cite{IK}.

\subsubsection{The algebra~$\AA_H$}\label{sssec-total-alg}

To construct what we call the 
\emph{generic $H$-Galois extension}\index{generic Hopf Galois extension}\index{Hopf Galois extension!generic}~$\AA_H$ 
we need the bilinear form $\sigma: H \times H \to \SS_H$ with values in~$\SS_H$\index{$\SS_H$} defined by
\begin{equation}\label{def-sigma}
\sigma(x,y) = \sum_{(x)(y)} \, t_{x_{(1)}} \, t_{y_{(1)}} \, t^{-1}_{x_{(2)}y_{(2)}} \, . \qquad (x,y \in H)
\end{equation}
By\,\cite[Prop.\,3.4]{KM2} the bilinear map~$\sigma$ actually takes values 
in the subalgebra~$\BB_H$ of~$\SS_H$.
We can then equip the vector space $\AA_H = \BB_H \otimes H$\index{$\AA_H$}\index{$\BB_H$} 
with the following product:\index{product}
\begin{equation}\label{def-*prod}
(b \otimes x) * (c \otimes y) =  \sum_{(x)(y)} \, bc\, \sigma(x_{(1)},y_{(1)})\, x_{(2)} y_{(2)}
\end{equation}
($b,c \in \BB_H$ and $x,y\in H$). 

The following properties of~$\AA_H$ were established in\,\cite{AK,KM1} (see also~\,\cite{Ka1}).

\begin{theorem}\label{th-AH}
Let $H$ be a finite-dimensional Hopf algebra. 

(a) The product $*$ turns $\AA_H$ into an associative unital algebra\index{algebra!associative}.

(b) The algebra~$\AA_H$ is a central $H$-Galois extension\index{Hopf Galois extension}\index{Hopf Galois extension!central} 
of~$\BB_H = \BB_H \otimes 1$
with coaction $\delta = \id_{\BB_H} \otimes \, \Delta$, where $\Delta$ is the coproduct of~$H$.
Moreover, $\AA_H$ is free as a $\BB_H$-module.

(c) Let $\chi_0: \BB_H \to \CC$ be the character defined as the restriction to~$\BB_H$ of the counit of~$\SS_H$.
Then there is an isomorphism of $H$-comodule algebras
\begin{equation*}
\CC \, \otimes_{\BB_H} \, \AA_H = \AA_H/ \ker(\chi_0)\, \AA_H \cong H.
\end{equation*}

(d) For any character\index{character of an algebra} $\chi : \BB_H \to \CC$ of~$\BB_H$, the fiber of~$\AA_H$ at~$\chi$
\begin{equation*}
\CC\,  \otimes_{\BB_H} \, \AA_H = \AA_H/ \ker(\chi)\, \AA_H 
\end{equation*}
is an $H$-Galois object\index{Galois object}.
\end{theorem}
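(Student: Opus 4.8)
The plan is to deduce part~(d) from parts~(a)--(c) by a base-change argument: the statement that $\AA_H$ is a central $H$-Galois extension of $\BB_H$ is a property that should be preserved when we push forward along a character $\chi : \BB_H \to \CC$, and the resulting $H$-Galois extension of $\CC$ is precisely what Definition~\ref{def-HGal} calls an $H$-Galois object. First I would fix a character $\chi : \BB_H \to \CC$ and form $A_\chi = \CC \otimes_{\BB_H} \AA_H = \AA_H/\ker(\chi)\AA_H$. Since $\BB_H$ is central in $\AA_H$ by part~(b), the ideal $\ker(\chi)\AA_H$ is a two-sided ideal and $A_\chi$ inherits an algebra structure; by part~(b) again $\AA_H$ is free as a $\BB_H$-module, so as a vector space $A_\chi \cong \CC \otimes_{\BB_H}(\BB_H \otimes H) \cong H$ — in particular $A_\chi$ is finite-dimensional of dimension $\dim H$. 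This is the analogue of Exercise~\ref{exo-bundle}(a): the fiber of the bundle at a point.

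Next I would check that the three conditions of Definition~\ref{def-HopfGalois} hold for $A_\chi$ over $B = \CC$. The coaction $\delta = \id_{\BB_H} \otimes \Delta$ on $\AA_H$ is $\BB_H$-linear, hence descends to a coaction $\delta_\chi : A_\chi \to A_\chi \otimes H$, which again is a morphism of algebras and satisfies coassociativity and counitarity because these identities are inherited from those on $\AA_H$ (equivalently, from those on $H$ via the isomorphism of part~(c) when $\chi = \chi_0$, and by the same formula for arbitrary $\chi$). Condition~(i) is automatic. For condition~(ii), that $A_\chi^{\co-H} = \CC 1$: under the coaction $\id \otimes \Delta$ a coinvariant element of $A_\chi$, written through the vector space isomorphism $A_\chi \cong H$, must satisfy $\Delta(x) = x \otimes 1$, and by Example~\ref{exa-H=comodalg} such an $x$ is a scalar multiple of $1_H$; one should verify the isomorphism $A_\chi \cong H$ is compatible with the coactions so this transports correctly. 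For condition~(iii), since $B = \CC$ we have $A_\chi \otimes_B A_\chi = A_\chi \otimes A_\chi$, and I would show the Galois map $\beta_\chi : A_\chi \otimes A_\chi \to A_\chi \otimes H$ is bijective by base-changing the isomorphism $\beta : \AA_H \otimes_{\BB_H} \AA_H \to \AA_H \otimes H$ of part~(b) along $\chi$: applying $\CC \otimes_{\BB_H} (-)$ to this $\BB_H$-linear isomorphism, and using that $\AA_H$ is $\BB_H$-flat (indeed free) so that $\CC \otimes_{\BB_H}(\AA_H \otimes_{\BB_H} \AA_H) \cong A_\chi \otimes_\CC A_\chi$, yields a bijection which one identifies with $\beta_\chi$ by inspecting the defining formula~\eqref{eq-beta}.

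The main obstacle I expect is the bookkeeping in this last base-change step: one must be careful that the canonical maps $\CC \otimes_{\BB_H}(\AA_H \otimes_{\BB_H}\AA_H) \to A_\chi \otimes_\CC A_\chi$ and $\CC \otimes_{\BB_H}(\AA_H \otimes_\CC H) \to A_\chi \otimes_\CC H$ are genuine isomorphisms (this is where freeness, or at least flatness plus finite generation, of $\AA_H$ over $\BB_H$ is essential — see Remark~\ref{rem-fflat}) and that they intertwine $\CC \otimes_{\BB_H}\beta$ with $\beta_\chi$, i.e.\ that the explicit formula $a \otimes a' \mapsto (a \otimes 1)\delta(a')$ is compatible with reduction modulo $\ker(\chi)$. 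A clean way to organize this is to note that everything in sight is $\BB_H$-linear, so the reduction functor $\CC \otimes_{\BB_H}(-)$ carries the commutative square defining $\beta$ to the square defining $\beta_\chi$; flatness then upgrades the statement ``$\beta$ is an isomorphism'' to ``$\beta_\chi$ is an isomorphism.'' Once this is in place, conditions~(i)--(iii) are verified, so $A_\chi$ is an $H$-Galois extension of $\CC$, i.e.\ an $H$-Galois object, which is the assertion of part~(d). \qed
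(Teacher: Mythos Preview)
The paper does not supply its own proof of Theorem~\ref{th-AH}: immediately before the statement it says ``The following properties of~$\AA_H$ were established in~\cite{AK,KM1} (see also~\cite{Ka1}),'' and no argument follows. So there is nothing to compare your proposal to directly.

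That said, your base-change argument for part~(d) is the natural one and is essentially correct. The key ingredients are exactly what you identify: centrality of~$\BB_H$ makes $\ker(\chi)\AA_H$ two-sided; $\BB_H$-linearity of the coaction $\delta = \id_{\BB_H}\otimes\Delta$ lets it descend; and freeness of~$\AA_H$ over~$\BB_H$ (from part~(b)) makes the tensor-product identifications $\CC\otimes_{\BB_H}(\AA_H\otimes_{\BB_H}\AA_H)\cong A_\chi\otimes_\CC A_\chi$ and $\CC\otimes_{\BB_H}(\AA_H\otimes_\CC H)\cong A_\chi\otimes_\CC H$ go through, carrying the isomorphism~$\beta$ to~$\beta_\chi$. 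Your handling of coinvariants via the $H$-comodule identification $A_\chi\cong H$ is also fine: the coaction on $\AA_H=\BB_H\otimes H$ is literally $\id\otimes\Delta$, so after reduction the comodule structure is that of~$H$ under~$\Delta$, and Example~\ref{exa-H=comodalg} applies.

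One small point worth tightening: the unit of~$\AA_H$ is $t_1^{-1}\otimes 1_H$ (see the exercise following the theorem), not $1\otimes 1_H$, so when you say ``scalar multiple of~$1_H$'' you should check that $\chi(t_1^{-1})\neq 0$ so that the image of the unit is a nonzero scalar multiple of the class of~$1\otimes 1_H$. This holds because $t_1$ is invertible in~$\SS_H$ (it is the $t$-symbol of the group-like element~$1$), hence $\chi$ sends it to a nonzero scalar; but it is worth saying explicitly.
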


This means that $\BB_H \subset \AA_H$ is a 
``non-commutative principal fiber bundle''\index{principal fiber bundle!non-commutative} with ``fiber''~$H$.
We can also see~$\AA_H$ as a deformation\index{deformation} of~$H$ over the parameter space~$\BB_H$
or, if one prefers, over the set $\Alg(\BB_H,\CC)$ of characters of~$\BB_H$.
By the last statement of the theorem, $\chi \mapsto \chi_*(\AA_H)$ induces a map $\Alg(\BB_H,\CC) \to \Gal_H(\CC)$.

\begin{exercise}
Check that the product\,\eqref{def-*prod} is associative with unit~$t_1^{-1} \otimes 1_H$.
\end{exercise}

\subsection{Multiparametric deformations of~$U_q\, \gs\gl(2)$ and of~$\gu_d$}\label{sec-deform-Uq}

We now illustrate the previous constructions on the cases where $H$ is the 
quantum enveloping algebra~$U_q\, = U_q\, \gs\gl(2)$ (defined in Sect.\,\ref{ssec-qea})
and its finite-dimensional quotients~$\gu_d$ (defined in Sect.\,\ref{ssec-ud}).
Both $U_q\,$ and~$\gu_d$ are pointed Hopf algebras.
Theo\-rems\,\ref{prop-def-Uq} and\,\ref{prop-def-ud} below are new.

\subsubsection{The generic base algebra of~$U_q\,$}\label{sssec-Uq-BH}

The Hopf algebra~$U_q\,$\index{$U_q\, \gs\gl(2)$} is infinite-dimen\-sion\-al with basis $\{E^i F^j K^{\ell}\}_{i,j \in \NN; \, \ell\in \ZZZ}$.
Its group~$\Gr(U_q\,)$\index{$\Gr(H)$} of group-like elements\index{group-like element} 
consists of all powers (positive and negative) of~$K$.
Therefore, by\,\eqref{SH-pointed} the free commutative Hopf algebra~$\SS_{U_q\,}$ is described by\index{$\SS_H$}
\begin{equation*}
\SS_{U_q\,} = \CC \left[ t_{E^i F^j K^{\ell}}\right]_{i,j \in \NN; \, \ell\in \ZZZ} \left[ \frac{1}{t_{K^m}}\right]_{m \in \ZZZ} .
\end{equation*}

The maximal commutative quotient Hopf algebra~$({U_q\,})_{\ab}$ is generated by four generators 
$\,\overline{\!E}$, $\,\overline{\!F}$, $\,\overline{\!K}$, $\,\overline{\!K}^{-1}$
subject to the same relations as the corresponding generators in~$U_q\,$ in Sect.\,\ref{ssec-qea}
plus the additional relations expressing that $({U_q\,})_{\ab}$ is commutative.
We thus have
\begin{equation*}
\overline{\!E} \,\overline{\!K} = \overline{\!K} \,\overline{\!E} = q^2 \,\overline{\!E}\,\overline{\!K} ,
\end{equation*}
which implies $\,\overline{\!E} = 0$ in~$({U_q\,})_{\ab}$ since $q^2 \neq 1$ and $\,\overline{\!K}$ is invertible.
Similarly, $\,\overline{\!F} = 0$.
Finally the relation
\begin{equation*}
\overline{\!K} - \overline{\!K}^{-1} 
= (q - q^{-1}) \, \left( \overline{\!E} \,\overline{\!F} - \overline{\!F}\,\overline{\!E} \right) = 0
\end{equation*}
shows that $\,\overline{\!K} = \overline{\!K}^{-1}$, hence $\,\overline{\!K}^2 = 1$ in~$({U_q\,})_{\ab}$. 
Therefore 
\[
({U_q\,})_{\ab} = \CC[\,\overline{\!K}]/(\,\overline{\!K}^2-1) \cong \CC[\ZZZ/2],
\]
\index{$H_{\ab}$}
which is the algebra of the group~$\ZZZ/2$. 

As noted in Example\,\ref{exa-BH}, the isomorphism $({U_q\,})_{\ab}\cong \CC[\ZZZ/2]$ implies that
$\SS_{U_q\,}$ is a superalgebra\index{superalgebra}: $\SS_{U_q\,} = \SS_{U_q\,}(0) \bigoplus \SS_{U_q\,}(1)$, 
and that the generic base algebra\index{generic base algebra}~$\BB_{U_q\,}$\index{$\BB_H$}
coincides with the $0$-degree component:
\[
\BB_{U_q\,} = \SS_{U_q\,}(0).
\]

On the generators $t_E$, $t_F$, $t_K$ the coproduct of~$\SS_{U_q\,}$ is given by 
\begin{equation*}
\Delta(t_E) = t_1 \otimes t_E + t_E \otimes t_K, 
\;\; \Delta(t_F) = t_{K^{-1}} \otimes t_F + t_F \otimes t_1,
\;\; \Delta(t_K) = t_K \otimes t_K .
\end{equation*}
Since $\widetilde{\pi}(t_E) = \overline{\!E} = 0$, $\widetilde{\pi}(t_F) = \overline{\!F} = 0$, 
and $\widetilde{\pi}(t_K) = \overline{\!K}$,
the coaction~$\delta$ of~$({U_q\,})_{\ab}$ on~$\SS_{U_q\,}$ satisfies
\begin{equation*}
\delta(t_E) =  t_E \otimes \,\overline{\!K}, \quad 
\Delta(t_F) = t_F \otimes 1, \quad 
\Delta(t_K) = t_K \otimes \,\overline{\!K} .
\end{equation*}
Therefore, $t_F$ is an even element, i.e. it belongs to~$\SS_{U_q\,}(0) = \BB_{U_q\,}$ while $t_E$ and~$t_K$ are both odd, 
that is belong to~$\SS_{U_q\,}(1)$.
It can be proved more generally that $t_{E^i F^j K^{\ell}}$ belongs to~$\BB_{U_q\,}$ if and only if $i+ \ell$ is even, 
and that $t^{-1}_{K^m}$ belongs to~$\BB_{U_q\,}$ if and only $m$ is even. 

\begin{exercise}\label{exo-BUq}
Set $u_{E^iF^jK^{\ell}} = t_{E^iF^jK^{\ell}}$ if $i+ \ell$ is even, and 
$u_{E^iF^jK^{\ell}} = t_{E^iF^jK^{\ell}} \, t^{-1}_K$ if $i+ \ell$ is odd.
Show that 
\begin{equation*}
\BB_{U_q\,} = \CC \left[ u_{E^i F^j K^{\ell}}\right]_{i,j \in \NN; \, \ell\in \ZZZ} \left[ \frac{1}{u_{K^m}}\right]_{m \in \ZZZ} .
\end{equation*}
\end{exercise}

\subsubsection{The algebra~$\AA_{U_q\,}$}\label{sssec-Uq-AH}

We have the following result.\index{$\AA_H$}

\begin{theorem}\label{prop-def-Uq}
The generic $U_q\,$-Galois extension\index{generic Hopf Galois extension}
\index{Hopf Galois extension!generic}~$\AA_{U_q\,}$ 
is the $\BB_H$-algebra generated by $E$, $F$, $K$, $K^{-1}$ subject to the relations
\begin{equation*}
K * K^{-1} = K^{-1} * K = \frac{t_K t_{K^{-1}}}{t_1},
\end{equation*}
\begin{equation*}
K*E = q^2 \, E* K + (1-q^2) \frac{t_E}{t_K} \, K * K,
\end{equation*}
\begin{equation*}
K*F = q^{-2} \, F*K + (1-q^{-2}) \, t_F \, K,
\end{equation*}
\begin{equation*}
E*F - F*E = t_1 \frac{(t_{K^{-1}}/t_K)\, K - K^{-1}}{q - q^{-1}} 
+ (q^{-2}-1) \left( \frac{t_E}{t_K} \, F* K - \frac{t_Et_F}{t_K} \, K \right) .
\end{equation*}
\end{theorem}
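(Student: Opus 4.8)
The plan is to realise $\AA_{U_q\,}=\BB_{U_q\,}\otimes U_q\,\gs\gl(2)$ as the $\BB_{U_q\,}$-algebra presented by the four displayed relations, in two stages: first check that the relations hold in $\AA_{U_q\,}$, then check that they suffice to present the algebra. Throughout I write $\mathsf E=1\otimes E$, $\mathsf F=1\otimes F$, $\mathsf K=1\otimes K$, $\mathsf K^{-1}=1\otimes K^{-1}$ for the generators of $\AA_{U_q\,}$; recall $1_{\AA_{U_q\,}}=t_1^{-1}\otimes 1$, so these are not literally the ``obvious'' elements, but their $*$-products are governed by \eqref{def-*prod}, and $\BB_{U_q\,}$ is embedded via $b\mapsto b\otimes 1$ as in Theorem\,\ref{th-AH}(b).

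For the first stage I would start from a short list of values of the cocycle $\sigma$ of \eqref{def-sigma}. The defining identity $\sum_{(x)}t^{-1}_{x_{(1)}}t_{x_{(2)}}=\eps(x)1=\sum_{(x)}t_{x_{(1)}}t^{-1}_{x_{(2)}}$ gives the normalisations $\sigma(x,1)=\sigma(1,x)=\eps(x)\,t_1$, and for group-likes $\sigma(K^m,K^n)=t_{K^m}t_{K^n}t^{-1}_{K^{m+n}}$ (so $\sigma(1,1)=t_1$ is invertible). Using the coproducts $\Delta(K^{\pm1})=K^{\pm1}\otimes K^{\pm1}$, $\Delta(E)=1\otimes E+E\otimes K$, $\Delta(F)=K^{-1}\otimes F+F\otimes 1$ together with \eqref{tx-coprod}, one computes $\sigma$ on the pairs $(K,E),(E,K),(K,F),(F,K),(E,F),(F,E)$; here it is essential to rewrite the products $KE=q^2EK$, $KF=q^{-2}FK$, $EF=FE+(K-K^{-1})/(q-q^{-1})$ in the PBW basis $\{E^iF^jK^\ell\}$ \emph{before} applying the linear maps $t$ and $t^{-1}$. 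Plugging these into \eqref{def-*prod} and replacing each resulting term of the shape $b\otimes K^n$ by $\bigl(b/\sigma(K,\dots,K)\bigr)\,\mathsf K^{*n}$ (absorbing all powers of $K$ into the generator $\mathsf K$), the parts carrying the auxiliary symbols $t^{-1}_{EK},t^{-1}_{FK^{-1}},t^{-1}_{FE},t^{-1}_{K^{\pm2}}$ collapse by the normalisation identity above, leaving the four relations of the statement. As a sample, $\mathsf K*\mathsf E=t_1\otimes KE+\sigma(K,E)\otimes K^2=q^2\,\mathsf E*\mathsf K+\bigl(\sigma(K,E)-q^2\sigma(E,K)\bigr)\otimes K^2$, and since $\sigma(K,E)-q^2\sigma(E,K)=(1-q^2)\,t_Et_Kt^{-1}_{K^2}$ while $\mathsf K*\mathsf K=t_K^2t^{-1}_{K^2}\otimes K^2$, this is exactly $q^2\,\mathsf E*\mathsf K+(1-q^2)(t_E/t_K)\,\mathsf K*\mathsf K$; the other three come the same way, the $\mathsf E*\mathsf F$ one being longest because of the PBW rewriting of $EF$. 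That the coefficients $t_Kt_{K^{-1}}/t_1$, $t_E/t_K$, $t_F$, $t_Et_F/t_K$ lie in $\BB_{U_q\,}$ is a one-line check from $\delta(t_E)=t_E\otimes\overline{\!K}$, $\delta(t_F)=t_F\otimes 1$, $\delta(t_K)=t_K\otimes\overline{\!K}$ and $\overline{\!K}^2=1$ (Sect.\,\ref{sssec-Uq-BH}).

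For the second stage, let $\widetilde\AA$ be the $\BB_{U_q\,}$-algebra abstractly presented by the four relations; the first stage gives a $\BB_{U_q\,}$-algebra map $\phi\colon\widetilde\AA\to\AA_{U_q\,}$ sending generators to generators. It is surjective: iterating \eqref{def-*prod} yields $\mathsf E^{*i}*\mathsf F^{*j}*\mathsf K^{*\ell}=c_{i,j,\ell}\,(1\otimes E^iF^jK^\ell)+(\text{terms of lower total degree in }E,F)$, where $c_{i,j,\ell}$ is a monomial in the invertible elements $t_1,t_K,t_{K^{-1}},t_{K^{\pm2}},\dots$ of $\BB_{U_q\,}$ (an iterated $\sigma$ evaluated only at $1$ and powers of $K$), so induction on degree puts every $1\otimes E^iF^jK^\ell$ in the image, hence all of $\AA_{U_q\,}$. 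On the other hand the four relations form a terminating set of straightening rules: the $\mathsf K*\mathsf E$ and $\mathsf K*\mathsf F$ relations, together with the $\mathsf K^{-1}$-versions deduced from them and from $\mathsf K*\mathsf K^{-1}=\mathsf K^{-1}*\mathsf K=(t_Kt_{K^{-1}}/t_1)\cdot 1$, push $\mathsf K^{\pm1}$ to the right past $\mathsf E$ and $\mathsf F$, and the $\mathsf E*\mathsf F-\mathsf F*\mathsf E$ relation moves $\mathsf F$ left of $\mathsf E$ with error terms of strictly smaller $E,F$-degree; hence $\widetilde\AA$ is spanned over $\BB_{U_q\,}$ by the ordered monomials $\mathsf E^i\mathsf F^j\mathsf K^\ell$ ($i,j\ge0$, $\ell\in\ZZZ$). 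Since $\AA_{U_q\,}$ is free over $\BB_{U_q\,}$ on $\{1\otimes E^iF^jK^\ell\}$ by Theorem\,\ref{th-AH}(b), and $\phi(\mathsf E^i\mathsf F^j\mathsf K^\ell)$ equals $1\otimes E^iF^jK^\ell$ up to lower-degree terms with invertible leading coefficient $c_{i,j,\ell}$, the family $\{\phi(\mathsf E^i\mathsf F^j\mathsf K^\ell)\}$ is again a $\BB_{U_q\,}$-basis; thus $\phi$ carries a spanning set to a basis, so it is injective, hence an isomorphism.

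The main obstacle is the bookkeeping in the first stage: the cocycle values and the raw products $\mathsf K*\mathsf E,\mathsf K*\mathsf F,\mathsf E*\mathsf F,\dots$ carry the auxiliary symbols $t^{-1}_{EK},t^{-1}_{FK^{-1}},t^{-1}_{FE},t^{-1}_{K^{\pm2}}$, and one must verify that after assembling \eqref{def-*prod} and re-expressing everything through the $\mathsf K^{*n}$ these recombine into exactly the clean coefficients $t_Kt_{K^{-1}}/t_1$, $t_E/t_K$, $t_F$, $t_Et_F/t_K$. The structural reason this must occur is that $\BB_{U_q\,}$ is precisely the coinvariant subalgebra, so the genuine coefficients are coinvariant while the spurious combinations are not and have to cancel; moreover, because Theorem\,\ref{th-AH} already supplies associativity of $*$ and freeness of $\AA_{U_q\,}$ over $\BB_{U_q\,}$, no independent consistency check on the presented relations is required.
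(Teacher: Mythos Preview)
Your argument is correct, but it takes a different route from the paper's. The paper does not compute the cocycle~$\sigma$ at all; instead it invokes the observation (from~\cite[Sect.~6]{AK}) that the map
\[
\Phi:\AA_H\longrightarrow \BB_H\otimes H,\qquad b\otimes x\longmapsto \sum_{(x)} bt_{x_{(1)}}\otimes x_{(2)}
\]
is an algebra isomorphism onto the \emph{ordinary} tensor product, so that relations among the generators $1\otimes E$, $1\otimes F$, $1\otimes K^{\pm 1}$ of~$\AA_{U_q}$ correspond to relations among the elements $X_E=t_1E+t_EK$, $X_F=t_{K^{-1}}F+t_F$, $X_K=t_KK$, $X_{K^{-1}}=t_{K^{-1}}K^{-1}$ computed with the \emph{usual} product of~$U_q$ (over the commutative extension~$\BB_{U_q}$). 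The verification then becomes a handful of short lines: e.g.\ $X_KX_E-q^2X_EX_K=(1-q^2)t_Et_KK^2=(1-q^2)(t_E/t_K)X_K^2$ directly, with no appearance of $t^{-1}_{EK}$ or $t^{-1}_{K^2}$ and no need to argue that such symbols cancel. Your approach reaches the same endpoint by computing $\sigma(K,E),\sigma(E,K),\dots$ and then reabsorbing the auxiliary $t^{-1}$-symbols; this is honest but heavier, and the ``structural reason'' you cite for the cancellations is precisely what the $X$-trick makes automatic.

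On the other hand, you do something the paper does not: your second stage supplies a genuine completeness argument showing that the four relations \emph{present}~$\AA_{U_q}$, whereas the paper only verifies that they hold. Your straightening/triangularity argument is sound. One small caveat: Theorem~\ref{th-AH} is stated for finite-dimensional~$H$, so your appeal to its part~(b) for freeness of~$\AA_{U_q}$ over~$\BB_{U_q}$ is not literally covered; but the freeness is immediate here since $\AA_{U_q}=\BB_{U_q}\otimes U_q$ as a $\BB_{U_q}$-module by construction, with basis $\{1\otimes E^iF^jK^\ell\}$.
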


The algebra~$\AA_{U_q\,}$ is an $U_q\,$-comodule algebra with coaction given by the same formulas 
as for the coproduct of~$U_q\,$. 
The algebra depends continuously on the parameters $t_E$,~$t_F$ which can take any complex values
and on the parameters $t_1$, $t_K$, $t_{K^{-1}}$ which can take any \emph{non-zero} complex values. 
Note that all monomials in the $t$-variables occurring in the previous relations belong to~$\BB_{U_q\,}$
(they are all of degree~$0$ in the superalgebra~$\SS_{U_q\,}$).

If we specialize the parameters $t_1$, $t_K$, $t_{K^{-1}}$ to~$1$ and the parameters $t_E$, $t_F$ to~$0$,
we recover the defining relations of~$U_q\,$ and $\AA_{U_q\,}$ becomes~$U_q\,$.
In other words, $\AA_{U_q\,}$ is a \emph{$5$-parameter deformation}\index{deformation} 
of~$U_q\,$ as a non-commutative principal bundle\index{principal fiber bundle!non-commutative}.

\begin{proof}
We use an observation made in\,\cite[Sect.\,6]{AK}: in order to find relations between elements $1 \otimes x$ in~$\AA_H$,
where $x$ is an arbitrary element of a Hopf algebra~$H$, it is enough to find the relations between the following
elements of the tensor product algebra~$\BB_H \otimes H$:
\begin{equation*}
X_x = \sum_{(x)} \, t_{x_{(1)}} \otimes x_{(2)} .
\end{equation*}
It follows from the formula for the coproduct of~$U_q\,$ (see Sect.\,\ref{ssec-qea}) that we have
\begin{equation*}
X_1 = t_1 \, 1, \qquad
X_K = t_K \, K, \qquad
X_{K^{-1}} = t_{K^{-1}} \, K^{-1}, 
\end{equation*}
\begin{equation*}
X_E = t_1 \, E + t_E \, K, \qquad
X_F = t_{K^{-1}} \, F + t_F\, 1.
\end{equation*}
(Here we dropped the tensor product signs since we may consider the commutative algebra~$\BB_H$
as an extended algebra of scalars.)

To prove the relations between $K$ and $K^{-1}$, it suffices to compute $X_KX_{K^{-1}}$ and $X_{K^{-1}}X_K$. We have
\begin{equation*}
X_KX_{K^{-1}} = t_K t_{K^{-1}} \, K K^{-1} = t_K t_{K^{-1}} = \frac{t_K t_{K^{-1}}}{t_1}\, X_1,
\end{equation*}
which is also equal to~$X_{K^{-1}} X_K$; this implies the desired formulas for~$K * K^{-1}$ and~$K^{-1} * K$. 

For the relation between $K$ and~$E$ in~$\AA_H$, it is enough to compute the following:
\begin{eqnarray*}
X_K X_E - q^2\, X_E X_K
& = & t_Kt_1 \, K E + t_Kt_E \, K^2 - q^2\, t_1 t_K EK - q^2\, t_Et_K \, K^2 \\
& = & t_1t_K \, (K E - q^2\, EK) + (1-q^2)\, t_Et_K\, K^2 \\
& = & (1-q^2)\, t_Et_K\, K^2.
\end{eqnarray*}
Now, $(X_K)^2 = t_K^2 \, K^2$.
Therefore, 
\begin{equation*}
X_K X_E - q^2\, X_E X_K = (1-q^2)\, t_Et_K/t_K^2 \, (X_K)^2 = (1-q^2)\, t_E/t_K \, (X_K)^2 .
\end{equation*}

We leave the computation of the relation between $K$ and~$F$ in~$\AA_H$ as an exercise to the reader.
For the commutator of $E$ and $F$ in~$\AA_H$, we have
\begin{eqnarray*}
X_E X_F - X_F X_E 
& = & (t_1 \, E + t_E \, K) (t_{K^{-1}} \, F + t_F\, 1) - (t_{K^{-1}} \, F + t_F\, 1) (t_1 \, E + t_E \, K)  \\
& = & t_1 t_{K^{-1}} \, (EF-FE) + (q^{-2} - 1) \, t_Et_{K^{-1}} FK \\
& = & \frac{1}{q - q^{-1}} \, t_1 t_{K^{-1}} \, (K - K^{-1}) + (q^{-2} - 1) \, t_Et_{K^{-1}} FK \\
& = & \frac{1}{q - q^{-1}} \, t_1 \left( \frac{t_{K^{-1}}}{t_K} \, X_K - X_{K^{-1}} \right) + (q^{-2} - 1) \, t_Et_{K^{-1}} FK .
\end{eqnarray*}
It remains to compute $FK$ in terms of the $X$-variables. We have
\begin{equation*}
X_F X_K = t_K t_{K^{-1}} \, FK + t_Ft_K\, K = t_K t_{K^{-1}} \, FK + t_F\, X_K,
\end{equation*}
so that
\begin{equation*}
t_Et_{K^{-1}} FK = \frac{t_E}{t_K}\, X_F X_K -  \frac{t_Et_F}{t_K} \, X_K.
\end{equation*}
Combining these equalities, we obtain a formula for $X_E X_F - X_F X_E$ in terms of the $X$-variables,
hence the desired formula for~$E * F - F * E$.
\qed
\end{proof}

\subsubsection{A deformation of~$\gu_d$}\label{sssec-ud}\index{deformation}

Let $q$ be a root of unity\index{root of unity} of order $d\geq 3$.
Consider the finite-dimensional Hopf algebra~$\gu_d$\index{$\gu_d$}
defined in Sect.\,\ref{ssec-ud}. We know that it has a basis consisting of the $e^3$~elements
$E^i F^j K^{\ell}$, where $1 \leq i,j,\ell \leq e-1$. 
Recall that $e= d/2$ if $d$ is even and $e=d$ if $d$ is odd. 
The group~$\Gr(\gu_d)$\index{$\Gr(H)$}\index{group-like element}
consists of the $e$~elements $1, K, K^2, \ldots, K^{e-1}$; it is a cyclic group of order~$e$.

By\,\eqref{SH-pointed} the free commutative Hopf algebra~$\SS_{\gu_d}$\index{$\SS_H$} is given by
\begin{equation*}
\SS_{\gu_d} = \CC \left[ t_{E^i F^j K^{\ell}} \right]_{0 \leq i,j , \ell \leq e-1} 
\left[ \frac{1}{t_{K^m}}\right]_{0 \leq m \leq e-1} .
\end{equation*}

The maximal commutative quotient Hopf algebra~$({\gu_d})_{\ab}$\index{$H_{\ab}$} is the quotient of $({U_q\,})_{\ab}$
by the additional relation $\,\overline{\!K}^e = 1$.
Since $\,\overline{\!K}^2 = 1$, we conclude that
\begin{equation*}
({\gu_d})_{\ab} =
\begin{cases}
\hskip 33pt \CC &  \text{if}\; e \; \text{is odd}, \\
\noalign{\smallskip}
({U_q\,})_{\ab} \cong \CC[\ZZZ/2] & \text{if}\; e \; \text{is even}.
\end{cases}
\end{equation*}
Therefore, if $e$ is odd, then $\SS_{\gu_d}$ is trivially graded, which implies $\BB_{\gu_d} = \SS_{\gu_d}$.
If~$e$ is even, then $\SS_{\gu_d}$ is a superalgebra\index{superalgebra} and 
the generic base algebra\index{generic base algebra} is $\BB_{\gu_d}$\index{$\BB_H$} is its even part
(see Exercise\,\ref{exo-Bud} below for a complete description).

\begin{theorem}\label{prop-def-ud}
The algebra~$\AA_{\gu_d}$ is the quotient of~$\AA_{U_q\,}$ by the two-sided ideal generated by the relations
\begin{equation*}
K^{*e} - \frac{t_K^e}{t_1} = 0, \qquad
\left(E - \frac{t_E}{t_K} K \right)^{*e} \!\! = 0, \qquad
\left(F - \frac{t_F}{t_1}  \right)^{*e} \!\! = 0 .
\end{equation*}
\end{theorem}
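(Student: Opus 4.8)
The plan is to pin down $\AA_{\gu_d}$ by comparing it with the presentation of $\AA_{U_q}$ from Theorem~\ref{prop-def-Uq} and then running a dimension count. Throughout I use the device already exploited in the proof of Theorem~\ref{prop-def-Uq}: the assignment $b\otimes x\mapsto b\,X_x$, with $X_x=\sum_{(x)}t_{x_{(1)}}\otimes x_{(2)}$, defines an injective algebra map $\AA_H\hookrightarrow\SS_H\otimes H$ (the target carrying the ordinary product), so that relations among the generators $1\otimes x$ of $\AA_H$ can be read off inside $\SS_H\otimes H$, and this map is natural in $H$. Since $\gu_d$ is the quotient of $U_q$ by the Hopf ideal $I$ generated by $E^e$, $F^e$, $K^e-1$, the Hopf surjection $p\colon U_q\to\gu_d$ induces, by Takeuchi's universal property (Theorem~\ref{th-Tak}) and the naturality of $\sigma$ (see \eqref{def-sigma}), a surjective algebra homomorphism $\varphi\colon\AA_{U_q}\to\AA_{\gu_d}$, $b\otimes x\mapsto\SS_p(b)\otimes p(x)$, intertwined with $\SS_p\otimes p$ via the embeddings above; establishing its existence (and the fact that $\SS_p$ restricts to a surjection $\BB_{U_q}\twoheadrightarrow\BB_{\gu_d}$, which must be checked separately for $e$ even and $e$ odd, the latter using $\BB_{\gu_d}=\SS_{\gu_d}$) reduces the statement to identifying $\ker\varphi$.

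First I would check that the three stated relations hold in $\AA_{\gu_d}$. From $\Delta(E)=1\otimes E+E\otimes K$, $\Delta(F)=K^{-1}\otimes F+F\otimes 1$ and $\Delta(K)=K\otimes K$ one computes directly, inside $\SS_{\gu_d}\otimes\gu_d$,
\[
E-\tfrac{t_E}{t_K}K\longmapsto t_1\otimes E,\qquad
F-\tfrac{t_F}{t_1}\longmapsto t_{K^{-1}}\otimes F,\qquad
K^{*e}-\tfrac{t_K^e}{t_1}\longmapsto t_K^e\otimes(K^e-1).
\]
Raising the first two images to the $e$-th power gives $t_1^e\otimes E^e$ and $t_{K^{-1}}^e\otimes F^e$; as $E^e=F^e=0$ and $K^e=1$ in $\gu_d$, all three images vanish, and injectivity of the embedding yields the three relations in $\AA_{\gu_d}$. (Over $U_q$ the same computation produces $t_1^e\otimes E^e$, $t_{K^{-1}}^e\otimes F^e$, $t_K^e\otimes(K^e-1)$, all in $\SS_{U_q}\otimes I$, so the three elements lie in $\ker\varphi$.) Hence $\varphi$ factors through a surjection $\bar\varphi$ from $\AA_{U_q}/J$ — the quotient by the two-sided ideal $J$ generated by the three relations, understood over the coefficient ring $\BB_{\gu_d}$, i.e.\ also modulo $\ker(\BB_{U_q}\to\BB_{\gu_d})$, as the statement tacitly intends — onto $\AA_{\gu_d}$.

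It remains to prove $\bar\varphi$ injective, and here I would bound $\AA_{U_q}/J$ as a $\BB_{\gu_d}$-module exactly as in the proof that $\{E^iF^jK^\ell\}_{0\le i,j,\ell\le e-1}$ is a basis of $\gu_d$ (\cite[Prop.~VI.5.8]{Ka-QG}). The relations of Theorem~\ref{prop-def-Uq}, whose ``leading terms'' reproduce the defining relations of $U_q$, let one rewrite every $*$-word in $E,F,K,K^{-1}$ as a $\BB_{\gu_d}$-combination of the ordered $*$-monomials $E^{*i}*F^{*j}*K^{*\ell}$ ($i,j\ge 0$, $\ell\in\ZZZ$). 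The relation $K^{*e}=t_K^e/t_1$, whose right-hand side is a unit of $\BB_{\gu_d}$, confines $\ell$ to $\{0,\dots,e-1\}$; and from the $K$--$E$ and $K$--$F$ relations of Theorem~\ref{prop-def-Uq} one derives $K*(E-\tfrac{t_E}{t_K}K)=q^2\,(E-\tfrac{t_E}{t_K}K)*K$ and $K*(F-\tfrac{t_F}{t_1})=q^{-2}(F-\tfrac{t_F}{t_1})*K$, so that — $q^2$ being a primitive $e$-th root of unity — the vanishing of the $e$-th $*$-powers of $E-\tfrac{t_E}{t_K}K$ and $F-\tfrac{t_F}{t_1}$, combined with the $q^2$-binomial formula, confines $i$ and $j$ to $\{0,\dots,e-1\}$ as well. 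Thus $\AA_{U_q}/J$ is generated as a $\BB_{\gu_d}$-module by the $e^3$ monomials $E^{*i}*F^{*j}*K^{*\ell}$ with $0\le i,j,\ell\le e-1$. By Theorem~\ref{th-AH}(b) together with \cite[Prop.~VI.5.8]{Ka-QG}, $\AA_{\gu_d}$ is free over $\BB_{\gu_d}$ of rank $\dim\gu_d=e^3$; and a surjection of $\BB_{\gu_d}$-modules from a module generated by $e^3$ elements onto a free module of rank $e^3$ is an isomorphism (a surjective endomorphism of a finitely generated module over a commutative ring being bijective). Therefore $\bar\varphi$ is an isomorphism, which is the assertion.

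The main obstacle is the straightening argument of the previous paragraph: one must verify that the deformed relations of Theorem~\ref{prop-def-Uq}, together with the three new ones, still allow exactly the same normal form as in $\gu_d$, with no further collapse. A secondary but unavoidable technicality is the careful bookkeeping of the change of coefficient ring from $\BB_{U_q}$ to $\BB_{\gu_d}$ and of the two parity cases for $e$, on which the very meaning of the relation $K^{*e}=t_K^e/t_1$ depends.
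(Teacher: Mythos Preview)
Your verification that the three relations hold in~$\AA_{\gu_d}$ via the $X$-variables is exactly the paper's proof: the paper computes $(X_K)^e - (t_K^e/t_1)X_1 = t_K^e(K^e-1)=0$, $(X_E - (t_E/t_K)X_K)^e = t_1^e E^e = 0$, and $(X_F - (t_F/t_1)X_1)^e = t_{K^{-1}}^e F^e = 0$ in $\BB_{\gu_d}\otimes\gu_d$, and then stops.

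The difference is in how the converse (``no further relations'') is handled. The paper does not argue it explicitly; it relies on the observation from~\cite[Sect.~6]{AK}, already invoked in the proof of Theorem~\ref{prop-def-Uq}, that the map $1\otimes x\mapsto X_x$ identifies $\AA_H$ with the $\BB_H$-subalgebra of the ordinary tensor product generated by the~$X_x$, so that a presentation of~$H$ by generators and relations transports directly to a presentation of~$\AA_H$. Since $\gu_d$ is presented over~$U_q$ by $E^e$, $F^e$, $K^e-1$, the corresponding $X$-relations are precisely the three displayed, and the paper considers this sufficient. Your route---construct the surjection $\bar\varphi$, bound the source by $e^3$ generators via a straightening argument, and conclude by the rank count against the free $\BB_{\gu_d}$-module $\AA_{\gu_d}$---is a correct and more self-contained alternative that does not appeal to the black box from~\cite{AK}. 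The price is the straightening step you flag, which is genuine work; the paper's shortcut avoids it entirely by treating the $X$-correspondence as already established.
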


If we set $t_1 = t_K = t_{K^{-1}} = 1$ and $t_E = t_F = 0$ in the defining relations of~$\AA_{\gu_d}$
(see Theorems\,\ref{prop-def-Uq} and\,\ref{prop-def-ud}), we recover those of~${\gu_d}$.

\begin{proof}
We proceed as in the proof of Theorem\,\ref{prop-def-Uq} by checking the relations between the corresponding 
$X$-variables in~$\BB_{\gu_d} \otimes {\gu_d}$.
We have
\begin{equation*}
(X_K)^e - \frac{t_K^e}{t_1}X_1 = t_K^e \, K^e - t_K^e = 0
\end{equation*}
since $K^e = 1$ in~$\gu_d$. 
Next, in view of $E^e = F^e = 0$ in~$\gu_d$, we have
\begin{equation*}
\left(X_E - \frac{t_E}{t_K} \, X_K \right)^{*e} = t_1^e \, E^e = 0
\quad
\text{and} 
\quad
\left(X_F - \frac{t_F}{t_1} \, X_1 \right)^{*e} = t_{K^{-1}}^e \, F^e = 0 .
\end{equation*}
This completes the proof.
\qed
\end{proof}

Let us determine the ``parameter space''~$\Alg(\BB_{\gu_d},\CC)$ when $e$ is odd. In this case,
$\BB_{\gu_d} = \SS_{\gu_d}$. 
Since $\SS_{\gu_d} = \CC \left[ t_{E^i F^j K^{\ell}} \right]_{0 \leq i,j , \ell \leq e-1} 
[ {1}/{t_{K^m}}]_{0 \leq m \leq e-1}$,
a character of~$\BB_{\gu_d}$ is completely determined by its values on the generators $t_{E^i F^j K^{\ell}}$; 
each of these generators can take any complex value, 
except in the case $(i,j) = (0,0)$, where the corresponding value has to be non-zero.
It follows that 
\begin{equation*}
\Alg(\BB_{\gu_d},\CC) \cong \CC^{e(e^2-1)} \times (\CC^{\times})^{e} ,
\end{equation*}
which is an open Zarisky subset of the affine space of dimension~$e^3$.

\begin{exercise}\label{exo-Bud}
Assume $e$ is even (equivalently, $d$ is divisible by~$4$). 
Define $u_{E^iF^jK^{\ell}}$ as in Exercise\,\ref{exo-BUq}.
Show that 
\begin{equation*}
\BB_ {\gu_d} = \CC \left[ u_{E^i F^j K^{\ell}} \right]_{0 \leq i,j , \ell \leq e-1} [ {1}/{u_{K^m}}]_{0 \leq m \leq e-1}.
\end{equation*}
Hence, $\Alg(\BB_{\gu_d},\CC) \cong \CC^{e(e^2-1)} \times (\CC^{\times})^{e}$ holds in this case too.
\end{exercise}

\subsection*{Acknowledgements}
I thank the organizers of the Summer school 
``Geo\-metric, topological and algebraic methods for quantum field theory'' held at Villa de Leyva, Colombia in July 2015
for inviting me to give the course that led to these notes. I am also grateful to the students for their feedback
and to Sylvie Paycha for her careful reading of these notes and her suggestions.
Finally let me mention the travel support I received from the Institut de Recherche Math\'e\-ma\-tique Avanc\'ee
in Strasbourg.

%%%%%%%%%

%%%
\printindex


\begin{thebibliography}{99}


\bibitem{Ab} Abe, E.:
{Hopf algebras}. Cambridge Tracts in Mathematics,~74. Cambridge University Press, Cambridge-New York,~1980

\bibitem{AHN} Aljadeff, E., Haile, D., Natapov, M.: 
{Graded identities of matrix algebras and the universal graded algebra}.
Trans. Amer. Math. Soc. \textbf{362}, 3125--3147 (2010)

\bibitem{AK} Aljadeff, E., Kassel, C.:
{Polynomial identities and noncommutative versal torsors}.
Adv. Math. \textbf{218}, 1453--1495 (2008)

\bibitem{Au1} Aubriot, T:
{Classification des objets galoisiens de $U_q(\gog)$ \`a homotopie pr\`es}.
Comm. Algebra \textbf{35}, 3919--3936 (2007)

\bibitem{Bi1} Bichon, J:
{Galois and bigalois objects over monomial non-semisimple Hopf algebras}.
J. Algebra Appl. \textbf{5}, 653--680 (2006)

\bibitem{Bi2} Bichon, J:
{Hopf-Galois objects and cogroupoids}.
Rev. Un. Mat. Argentina \textbf{5}, 11--69 (2014)

\bibitem{BlM} Blattner, R. J., Montgomery,~S.: 
A duality theorem for Hopf module algebras.
J.~Algebra~\textbf{95}, 153--172 (1985)

\bibitem{Br} Brown, K. S.:
{Cohomology of groups}.
Graduate Texts in Mathematics,~87. Springer-Verlag, New York--Berlin~(1982)

\bibitem{BM1} Brzezi\'nski, T., Majid, S.: 
{Quantum group gauge theory on quantum spaces}.
Comm. Math. Phys. \textbf{157}, 591--638 (1993); Erratum: \textbf{167}, 235 (1995)

\bibitem{Ca} Caenepeel, S.:
{Brauer groups, Hopf algebras and Galois theory}.
Kluwer Academic Publishers, Dordrecht (1998)

\bibitem{CP} Chari, V., Pressley, A.: 
{A guide to quantum groups}. Cambridge University Press, Cambridge~(1994)

\bibitem{CS} Chase, S. U., Sweedler, M. E.:
{Hopf algebras and Galois theory}.
Lecture Notes in Mathematics, Vol.~97, Springer-Verlag, Berlin--New York~(1969)

\bibitem{Co} Connes, A.:
{Noncommutative geometry}. Academic Press, Inc., San Diego, CA~(1994)

\bibitem{DGH} D\k{a}browski, L., Grosse, H., Hajac, P. M.:
{Strong connections and Chern-Connes pairing in the Hopf-Galois theory}.
Comm. Math. Phys. \textbf{220}, 301--331 (2001)

\bibitem{Di} Dixmier, J.:
Enveloping algebras. North-Holland Publ. Co, Amsterdam--New York--Oxford~(1977)

\bibitem{DT2} Doi, Y., Takeuchi, M.:
{Quaternion algebras and Hopf crossed products}.
Comm. Algebra \textbf{23}, 3291--3325 (1995) 

\bibitem{Dr1} Drinfeld, V. G.:
{Hopf algebras and the quantum Yang-Baxter equation}, 
Dokl. Akad. Nauk SSSR \textbf{283}, 1060--1064 (1985); 
English translation: Soviet Math. Dokl. \textbf{32}, 254--258 (1985)

\bibitem{Dr2} Drinfeld, V. G.:
{Quantum groups}. In: Proceedings of the International Congress of Mathematicians, 
Vol.~1, 2 (Berkeley, Calif., 1986), pp.~798--820. Amer. Math. Soc., Providence, RI (1987) 

\bibitem{Du} {\DJ}ur{\dj}evi\'c, M.:
{Quantum principal bundles as Hopf-Galois extensions} (1995) arXiv:q-alg/9507022.

\bibitem{EM} Eilenberg, S., Moore, J. C.:
{Homology and fibrations. I. Coalgebras, cotensor product and its derived functors}.
Comment. Math. Helv.~\textbf{40}, 199--236 (1966)

\bibitem{GM} Garc\'\i a M\'arquez, G.:
{Vivir para contarla}. Editorial Diana, M\'exico D.F.~(2002)

\bibitem{GKM} Guillot, P., Kassel, C., Masuoka, A.:
{Twisting algebras using non-commutative torsors: explicit computations},
Math.~Z.\ \textbf{271}, 789--818 (2012)

\bibitem{Gu} Gutt, S.:
{Deformation quantization and group actions}.
In: Proceedings of the Summer school 
``Geo\-metric, topological and algebraic methods for quantum field theory'',
Villa de Leyva, Colombia,~2015

\bibitem{Ha1} Hajac, P. M.:
{Strong connections on quantum principal bundles}.
Comm. Math. Phys. \textbf{182}, 579--617 (1996)

\bibitem{HM} Hajac, P. M., Majid, S.:
{Projective module description of the q-monopole}.
Comm. Math. Phys. \textbf{206}, 247--264 (1999)

\bibitem{Hu} Husemoller, D.:
{Fibre bundles}, Third edition.
Graduate Texts in Mathematics,~20. Springer-Verlag, New York--Heidelberg~(1994)

\bibitem{IK} Iyer, U. N., Kassel, C.:
{Generic base algebras and universal comodule algebras for some finite-dimensional Hopf algebras}
Trans.\ Amer.\ Math.\ Soc.\ \textbf{367}, 8465--8486 (2015)

\bibitem{Jc} Jacobson, N.:
Lie algebras. Dover Publ., New York~(1979)

\bibitem{Ja} Jantzen, J. C.:
{Lectures on quantum groups}. Graduate Studies in Mathematics,~6.
American Mathematical Society, Providence, RI~(1996)

\bibitem{Ji} Jimbo, M.:
{A $q$-difference analogue of $U(\gog)$ and the Yang-Baxter equation}.
Lett.\ Math.\ Phys.\ \textbf{10}, 63--69 (1985)

\bibitem{Ka-QG} Kassel, C.:
{Quantum groups}.
Graduate Texts in Mathematics,~155. Springer-Verlag, New York~(1995)

\bibitem{Ka0} Kassel, C.:
{Quantum principal bundles up to homotopy equivalence}. In: O. A. Laudal, R. Piene (eds.) 
The Legacy of Niels Henrik Abel,
The Abel Bicentennial, Oslo, 2002, pp.~737--748. Springer-Verlag (2004)
arXiv:math.QA/0507221.

\bibitem{Ka1} Kassel, C.:
{Generic Hopf Galois extensions}.
In: Quantum groups and noncommutative spaces, pp.~104--120.
Aspects Math.,~E41, Vieweg + Teubner, Wiesbaden~(2011)
arXiv:0809.0638. 

\bibitem{Ka15} Kassel, C.: 
{Hopf algebras and polynomial identities}.
In: Quantum Groups and Quantum Topology, pp.~49--62.
RIMS K\^oky\^uroku~1714 (2010)
arXiv:1009.3180.

\bibitem{KM1} Kassel, C., Masuoka, A.:
{Flatness and freeness properties of the generic Hopf Galois extensions}.
Rev.\ Un.\ Mat.\ Argentina \textbf{51}, 79--94 (2010) 
arXiv:0911.3719.

\bibitem{KM2} Kassel, C., Masuoka, A.:
{The Noether problem for Hopf algebras}.
J.~Noncommut. Geom. (2016) arXiv:1404.4941.

\bibitem{KRT} Kassel, C., Rosso, M., Turaev, V.:
{Quantum groups and knot invariants}.
Panoramas et Synth\`eses,~5. Soci\'et\'e Math\'ematique de France, Paris~(1997) 

\bibitem{KS} Kassel, C., Schneider, H.-J.:
{Homotopy theory of Hopf Galois extensions}.
Ann.\ Inst.\ Fourier (Grenoble) \textbf{55}, 2521--2550 (2005)

\bibitem{KR} Kulish, P. P. , Reshetikhin, N. Yu.:
{Quantum linear problem for the sine-Gordon equation and higher representations},
Zap. Nauchn. Sem. Leningrad. Otdel. Mat. Inst. Steklov. (LOMI) \textbf{101}, 101--110 (1981);
English translation: J.~Soviet Math \textbf{23}, 2435--2441 (1983)

\bibitem{LPR} Landi, G., Pagani, C., Reina, C.:
{A Hopf bundle over a quantum four-sphere from the symplectic group}.
Comm. Math. Phys. \textbf{263}, 65--88 (2006)

\bibitem{LS} Landi, G., van Suijlekom, W.:
{Principal fibrations from noncommutative spheres}, 
Comm. Math. Phys. \textbf{260}, 203--225 (2005)

\bibitem{Lu} Lusztig, G.:
{Introduction to quantum groups}. Progress in Mathematics,~110, Birkh\"auser Boston, Inc., Boston, MA~(1993)

\bibitem{Mj} Majid, S.:
{Foundations of quantum group theory}. Cambridge University Press, Cambridge~(1995)

\bibitem{Ma1} Masuoka, A.:
{Cleft extensions for a Hopf algebra generated by a nearly primitive element}.
Comm. Algebra \textbf{22}, 4537--4559 (1994)

\bibitem{Ma3} Masuoka, A.:
{Abelian and non-abelian second cohomologies of quantized enveloping algebras}.
J.~Algebra \textbf{320}, 1--47 (2008)

\bibitem{Mo} Montgomery, S.:
{Hopf algebras and their actions on rings}.
CBMS Conf.\ Series in Math., vol.~82, Amer.\ Math.\ Soc., Providence, RI~(1993)

\bibitem{Ne} Nenciu, A.:
{Cleft extensions for a class of pointed Hopf algebras constructed by Ore extensions}.
Comm.\ Algebra \textbf{29}, 1959--1981 (2001)

\bibitem{PvO} Panaite, F., Van Oystaeyen, F.:
{Clifford-type algebras as cleft extensions for some pointed Hopf algebras}.
Comm.\ Algebra \textbf{28}, 585--600 (2000)

\bibitem{Pf} Pflaum, M. J.:
{Quantum groups on fibre bundles}.
Comm.\ Math.\ Phys.\ \textbf{166}, 279--315 (1994)

\bibitem{Po} Podle\'s, P.:
{Quantum spheres}. Lett.\ Math.\ Phys.\ \textbf{14}, 193--202 (1987)

\bibitem{RTF} Reshetikhin, N. Yu., Takhtadzhyan, L. A., Faddeev, L. D.:
{Quantization of Lie groups and Lie algebras}. Algebra i Analiz \textbf{1}, 178--206 (1989); 
English translation: Leningrad Math.~J. \textbf{1}, 193--225 (1990)

\bibitem{Ru} Rumynin, D.:
{Hopf-Galois extensions with central invariants and their geometric properties}.
Algebr. Represent. Theory~\textbf{1}, 353--381 (1998)

\bibitem{Sc1} Schneider, H.-J.:
{Principal homogeneous spaces for arbitrary Hopf algebras}.
Israel J. Math.~\textbf{72}, 167--195 (1990)

\bibitem{Se} Serre, J.-P.:
Lie algebras and Lie groups. W. A. Benjamin Inc., New York--Amsterdam~(1965)

\bibitem{Si} Sitarz, A.:
{Pointless Geometry}. 
In: Mathematical Structures of the Universe, Copernicus Center Press~(2014) 

\bibitem{Sk} Sklyanin, E. K.:
{On an algebra generated by quadratic relations}.
Uspekhi Mat. Nauk \textbf{40}, 214 (1985) (in Russian)

\bibitem{St}  Steenrod, N.:
{The topology of fibre bundles}.
Princeton Mathematical Series, vol.~14. Princeton University Press, Princeton, N. J. (1951)

\bibitem{Sw} Sweedler, M.~E.:
{Hopf algebras}. W. A. Benjamin, Inc., New York~(1969)

\bibitem{Ta} Takeuchi, M.:
{Free Hopf algebras generated by coalgebras}.
J.~Math.\ Soc.\ Japan~\textbf{23}, 561--582~(1971)

\bibitem{Ta1} Takeuchi, M.:
{Some topics on $GL_q(n)$},
J.~Algebra \textbf{147}, 379--410 (1992)

\bibitem{Wo} Woronowicz, S. L.:
{Compact matrix pseudogroups}. Comm.\ Math.\ Phys.\ \textbf{111}, 613--665 (1987)

\bibitem{Wo2} Woronowicz, S. L.:
{Differential calculus on compact matrix pseudogroups (quantum groups)}.
Comm.\ Math.\ Phys.\ \textbf{122}, 125--170 (1989)

\end{thebibliography}
\end{document}